\providecommand{\tabularnewline}{\\}
\numberwithin{equation}{section}
\numberwithin{figure}{section}
\theoremstyle{plain}
\newtheorem{thm}{\protect\theoremname}
\theoremstyle{plain}
\newtheorem{prop}[thm]{\protect\propositionname}
\theoremstyle{remark}
\newtheorem{rem}[thm]{\protect\remarkname}
\theoremstyle{plain}
\newtheorem{cor}[thm]{\protect\corollaryname}
\theoremstyle{definition}
\newtheorem{example}[thm]{\protect\examplename}
\theoremstyle{plain}
\newtheorem{lem}[thm]{\protect\lemmaname}
\theoremstyle{definition}
\newtheorem{defn}[thm]{\protect\definitionname}
\providecommand{\corollaryname}{Corollary}
\providecommand{\definitionname}{Definition}
\providecommand{\examplename}{Example}
\providecommand{\lemmaname}{Lemma}
\providecommand{\propositionname}{Proposition}
\providecommand{\remarkname}{Remark}
\providecommand{\theoremname}{Theorem}
\begin{document}

\title[Number of Kummer structures on generalized Kummer surfaces]{Number of Kummer structures and Moduli spaces of generalized Kummer surfaces}


\addtolength{\textwidth}{0mm}
\addtolength{\hoffset}{-0mm} 
\addtolength{\textheight}{0mm}
\addtolength{\voffset}{-0mm} 


\global\long\def\CC{\mathbb{C}}%
 
\global\long\def\BB{\mathbb{B}}%
 
\global\long\def\PP{\mathbb{P}}%
 
\global\long\def\QQ{\mathbb{Q}}%
 
\global\long\def\RR{\mathbb{R}}%
 
\global\long\def\FF{\mathbb{F}}%

\global\long\def\DD{\mathbb{D}}%
 
\global\long\def\NN{\mathbb{N}}%
\global\long\def\ZZ{\mathbb{Z}}%
 
\global\long\def\HH{\mathbb{H}}%
 
\global\long\def\Gal{{\rm Gal}}%
\global\long\def\OO{\mathcal{O}}%
\global\long\def\cO{\mathcal{O}}%
\global\long\def\bA{\mathbf{A}}%

\global\long\def\kP{\mathfrak{P}}%
 
\global\long\def\kQ{\mathfrak{q}}%
 
\global\long\def\ka{\mathfrak{a}}%
\global\long\def\kP{\mathfrak{p}}%

\global\long\def\cC{\mathfrak{\mathcal{C}}}%
 
\global\long\def\cM{\mathcal{M}}%
\global\long\def\cK{\mathcal{K}}%
\global\long\def\cN{\mathcal{N}}%
\global\long\def\cP{\mathcal{P}}%
\global\long\def\cH{\mathcal{H}}%

\global\long\def\a{\alpha}%
 
\global\long\def\b{\beta}%
 
\global\long\def\d{\delta}%
 
\global\long\def\D{\Delta}%
 
\global\long\def\L{\Lambda}%
 
\global\long\def\g{\gamma}%

\global\long\def\G{\Gamma}%
 
\global\long\def\d{\delta}%
 
\global\long\def\D{\Delta}%
 
\global\long\def\e{\varepsilon}%
 
\global\long\def\k{\kappa}%
 
\global\long\def\l{\lambda}%
 
\global\long\def\m{\mu}%
 
\global\long\def\o{\omega}%
 
\global\long\def\p{\pi}%
 
\global\long\def\P{\Pi}%
 
\global\long\def\s{\sigma}%

\global\long\def\S{\Sigma}%
 
\global\long\def\t{\theta}%
 
\global\long\def\T{\Theta}%
 
\global\long\def\f{\varphi}%

\global\long\def\deg{{\rm deg}}%
 
\global\long\def\det{{\rm det}}%

\global\long\def\Dem{Proof: }%
 
\global\long\def\ker{{\rm Ker\,}}%
 
\global\long\def\im{{\rm Im\,}}%
 
\global\long\def\rk{{\rm rk\,}}%
 
\global\long\def\car{{\rm car}}%
\global\long\def\fix{{\rm Fix( }}%

\global\long\def\card{{\rm Card\  }}%
 
\global\long\def\codim{{\rm codim\,}}%
 
\global\long\def\coker{{\rm Coker\,}}%
 
\global\long\def\mod{{\rm mod }}%

\global\long\def\pgcd{{\rm pgcd}}%
 
\global\long\def\ppcm{{\rm ppcm}}%
 
\global\long\def\la{\langle}%
 
\global\long\def\ra{\rangle}%

\global\long\def\Alb{{\rm Alb(}}%
 
\global\long\def\Jac{{\rm Jac(}}%
 
\global\long\def\Disc{{\rm Disc(}}%
 
\global\long\def\Tr{{\rm Tr}}%
\global\long\def\Nr{{\rm Nr}}%
 
\global\long\def\NS{{\rm NS(}}%
 
\global\long\def\Ns{{\rm NS}}%
 
\global\long\def\Pic{{\rm Pic(}}%
 
\global\long\def\FM{{\rm FM}}%

\global\long\def\Pr{{\rm Pr}}%
 
\global\long\def\rad{{\rm rad}}%

\global\long\def\Km{{\rm Km}}%
\global\long\def\rk{{\rm rk(}}%
\global\long\def\Hom{{\rm Hom(}}%
 
\global\long\def\End{{\rm End}}%
 
\global\long\def\aut{{\rm Aut}}%
 
\global\long\def\SSm{{\rm S}}%
\global\long\def\psl{{\rm PSL}}%

\global\long\def\cu{{\rm (-2)}}%
\global\long\def\aut{{\rm Aut}}%
 
\global\long\def\mod{{\rm \,mod\,}}%
 
\subjclass[2000]{Primary: 14J28, 14K10, Secondary: 14G35, 11G15, 11R52}
\author{Xavier Roulleau}
\begin{abstract}
A generalized Kummer surface $X=\Km_{3}(A,G_{A})$ is the minimal
resolution of the quotient of a $2$-dimensional complex torus  by
an order $3$ symplectic automorphism group $G_{A}$. A Kummer structure
on $X$ is an isomorphism class of pairs $(B,G_{B})$ such that $X\simeq\Km_{3}(B,G_{B})$.
When the surface is algebraic, we obtain that the number of Kummer
structures is linked with the number of order $3$ elliptic points
on some Shimura curve naturally related to $A$. For each $n\in\NN$,
we obtain generalized Kummer surfaces $X_{n}$ for which the number
of Kummer structures is $2^{n}$. We then give a classification of
the moduli spaces of generalized Kummer surfaces. When the surface
is non algebraic, there is only one Kummer structure, but the number
of irreducible components of the moduli spaces of such surfaces is
large compared to the algebraic case. The endomorphism rings of the
complex $2$-tori we study are mainly quaternion orders, these orders
contain the ring of Eisenstein integers. One can also see this paper
as a study of quaternion orders $\OO$ over $\QQ$ that contain the
ring of Eisenstein integers. We obtain that such order is determined
up to isomorphism by its discriminant, and when the quaternion algebra
is indefinite, the order is principal. 
\end{abstract}

\maketitle

\section{Introduction}

A generalized Kummer surface $X=\Km_{3}(A,G_{A})$ (or simply $\Km_{3}(A)$)
is the minimal resolution of the quotient $A/G_{A}$ of a $2$-dimensional
complex torus  $A$ by an order $3$ symplectic automorphism group
$G_{A}$. Let us recall that the quotient surface $A/G_{A}$ has $9A_{2}$-singularities
($9$ cusps); the minimal resolution of each cusp is by a ${\bf A}_{2}$-configuration,
which means two smooth rational curves $C,C'$ in $X$ with intersection
$CC'=1$.

A generalized Kummer structure of $X$ is an isomorphism class of
pairs $(B,G_{B})$ of complex $2$-dimensional torus and order $3$
symplectic automorphism group $G_{B}\subset\aut(B)$ such that $\Km_{3}(B,G_{B})\simeq X$.
The generalized Kummer structures on $X$ are in bijective correspondence
with the orbits of $9{\bf A}_{2}$-configurations on $X$ under $\aut(X)$.
Some particular family of generalized Kummer surfaces is studied in
\cite{KRS}, and in \cite{RS3} some new $9{\bf A}_{2}$-configurations
are constructed, which permit to better understand the automorphism
group of $X$. 

In the present paper, we investigate the number $\mathrm{N}_{KS}(X)$
of generalized Kummer structures on $X$, and the moduli spaces of
these surfaces. While for non-algebraic complex tori, it is easy to
see that $\mathrm{N}_{KS}(X)=1$, the situation for abelian surfaces
is more complicated. In order to compute the numbers $\mathrm{N}_{KS}(X)$,
we use two constructions of abelian surfaces with an order $3$ symplectic
automorphism group. 

The first construction, by Barth \cite{Barth}, is of Hodge-theoretic
nature and deals with $2$-dimensional complex tori (for short complex
$2$-tori) $A$, algebraic or not. Barth studies the action of the
order $3$ automorphism group $G_{A}$ on the lattice $H^{2}(A,\ZZ)$.
The Picard number $\rho_{X}$ of the generalized Kummer surface $X=\Km_{3}(A)$
is computed to be $18$, $19$ or $20$. If $\rho_{X}=18$, the surface
is non-algebraic, if $\rho_{X}=20$ the surface is algebraic, but
is isolated in its moduli space. The main case of interest for us
will be when $X$ has Picard number $19$. Then the orthogonal complement
in the Néron-Severi group of $X$ of the $18$ curves above the $9$
cusps is generated by a class $L_{X}$. One has $L_{X}^{2}=0$ or
$2\mod6$ and for any integer $\ell$ in $\ZZ$ congruent to $0$
or $2$ mod $6$, there exists a generalized Kummer surface $X=\Km_{3}(A)$
with $L_{X}^{2}=\ell$; such a surface $X$ is algebraic if and only
if $L_{X}^{2}>0$. We obtain that 
\begin{thm}
Consider $X=\Km_{3}(A)$ and suppose that $\rho_{X}=19$. The Néron-Severi
group $\NS A)$ of the complex $2$-torus $A$ is generated by three
divisors with intersection matrices {\footnotesize{}
\[
\left(\begin{array}{ccc}
\frac{1}{3}(L_{X}^{2}-2) & 1 & 0\\
1 & -2 & 1\\
0 & 1 & -2
\end{array}\right)\text{ or }\left(\begin{array}{ccc}
\tfrac{1}{3}L_{X}^{2} & 0 & 0\\
0 & -2 & 1\\
0 & 1 & -2
\end{array}\right)
\]
}according if $L_{X}^{2}=2\mod6$ or $L_{X}^{2}=0\mod6$. If $L_{X}^{2}\neq0$,
the endomorphism ring $\End(A)$ of $A$ is an order in a quaternion
algebra over $\QQ$. 
\end{thm}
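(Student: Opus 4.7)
The plan is to exploit the $G_A$-action on the cohomology lattice $H^{2}(A,\ZZ)$, in the Hodge-theoretic spirit of \cite{Barth}. Because $\sigma\in G_A$ acts symplectically with order $3$, $H^{1}(A,\ZZ)$ acquires the structure of a free rank-$2$ module over $\ZZ[\omega]$ (using that $\ZZ[\omega]$ is a PID, so all projective modules are free). Passing to $H^{2}(A,\ZZ)\cong\wedge^{2}_{\ZZ}H^{1}(A,\ZZ)$ yields a rational $G_A$-decomposition $H^{2}(A,\ZZ)\otimes\QQ=(H^{+})_{\QQ}\oplus(H^{-})_{\QQ}$, where the invariant part $H^{+}$ has rank $4$ and the $\omega$-eigenpart $H^{-}$ has rank $2$. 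A direct computation identifies $H^{-}$, as a $\ZZ$-quadratic lattice, with the $A_{2}(-1)$ root lattice of Gram matrix $\begin{pmatrix}-2 & 1\\ 1 & -2\end{pmatrix}$, explaining the $2\times 2$ block that appears in both stated matrices.

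The key Hodge-theoretic input comes next. Since $\sigma$ acts on $H^{1,0}(A)$ with eigenvalues $\{\omega,\omega^{2}\}$ (being symplectic of order $3$), its induced action on $H^{1,1}(A)$ has eigenvalues $\{1,1,\omega,\omega^{2}\}$, so the $\omega$- and $\omega^{2}$-eigenspaces of $\sigma$ on $H^{2}(A,\CC)$ lie entirely in $H^{1,1}(A)$. Therefore $H^{-}\subset H^{1,1}(A)\cap H^{2}(A,\ZZ)=\Ns(A)$, so the $A_{2}(-1)$ block is automatically algebraic. Combining this with the standard relation $\rho_{X}=18+\mathrm{rk}\,\Ns(A)^{G_A}$ (coming from the orthogonal decomposition of $\Ns(X)_{\QQ}$ into the nine disjoint $A_{2}$-configurations of exceptional curves together with the pullback $q^{*}\Ns(A/G_A)_{\QQ}=\Ns(A)^{G_A}_{\QQ}$), the hypothesis $\rho_{X}=19$ forces $\mathrm{rk}\,\Ns(A)^{G_A}=1$. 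Writing $H_1$ for a primitive generator of the invariant part and $f_{1},f_{2}$ for a basis of $H^{-}$, one concludes that $\Ns(A)$ has rank $3$, generated by $H_1,f_{1},f_{2}$.

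What remains is to pin down the Gram matrix and to establish the quaternion algebra claim. For the Gram matrix, applying the projection formula to the degree-$3$ quotient $q:A\to A/G_A$ and comparing via the resolution $\pi:X\to A/G_A$ relates $H_{1}^{2}$ to $L_{X}^{2}$; case analysis according to whether $\Ns(A)$ is the orthogonal direct sum $\ZZ H_1\oplus H^{-}$ (forcing $H_{1}^{2}=L_{X}^{2}/3\in\ZZ$, hence $L_{X}^{2}\equiv 0\pmod 6$ and the second matrix) or a strict index-$3$ overlattice of $\ZZ H_1\oplus H^{-}$ (forcing the off-diagonal $1$ together with $H_{1}^{2}=(L_{X}^{2}-2)/3$, hence $L_{X}^{2}\equiv 2\pmod 6$ and the first matrix) produces the two stated possibilities; computing the discriminant of both matrices gives $L_X^2$ in either case, confirming the dichotomy. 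The main obstacle is this discriminant-form bookkeeping, which controls which rational classes in $(\Ns(A)^{+})_{\QQ}\oplus(\Ns(A)^{-})_{\QQ}$ actually lift to integral classes on $A$. Finally, when $L_{X}^{2}\neq 0$ the class $H_1$ is non-isotropic and $A$ is algebraic; for an abelian surface of Picard number $3$, the classification of endomorphism algebras forces $\End^{0}(A)$ to be a quaternion algebra over $\QQ$ --- either the split algebra $M_{2}(\QQ)$, when $A\sim E\times E$ with $E$ non-CM, or an indefinite quaternion division algebra, when $A$ is simple --- so $\End(A)$ is an order in a quaternion algebra, automatically containing $\ZZ[\omega]$ by construction of $G_A$.
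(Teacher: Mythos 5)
Your structural setup (the rank-$2$ anti-invariant part of $H^{2}(A,\ZZ)$ lies in $\mathrm{NS}(A)$ and is a copy of $A_{2}(-1)$, so $\mathrm{NS}(A)$ is an overlattice of $\ZZ L_{A}\oplus A_{2}(-1)$ of index $1$ or $3$) matches the paper's starting point, but the heart of the theorem is exactly the step you leave as ``discriminant-form bookkeeping'', and that bookkeeping cannot be done abstractly. When $3\mid L_{A}^{2}$ both the orthogonal direct sum and two index-$3$ even overlattices exist as abstract lattices, so nothing in your argument excludes, say, $L_{X}^{2}\equiv 2\bmod 6$ together with $\mathrm{NS}(A)=\ZZ L_{A}\oplus A_{2}(-1)$ (whose Gram matrix, $\mathrm{diag}(3L_{X}^{2})\oplus A_{2}(-1)$, is neither of the two stated matrices), nor $9\mid L_{X}^{2}$ together with a proper overlattice. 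Likewise your ``projection formula'' only yields $u_{0}v_{0}=3$, i.e. $L_{A}^{2}=3L_{X}^{2}$ or $L_{A}^{2}=\tfrac{1}{3}L_{X}^{2}$; it does not tell you which alternative goes with which congruence class of $L_{X}^{2}$. The paper settles both points with explicit integral data: Barth's coordinates for $\g_{1},\dots,\g_{4},\d_{1},\d_{2}$ in $H^{2}(A,\ZZ)$ and the expression $\pi^{*}q_{*}L_{X}=3n_{1}\g_{1}+3n_{2}\g_{2}+3n_{3}\g_{3}+n_{4}(\g_{3}+\g_{4})$ with coprime $n_{i}$ (Proposition \ref{PROP:ClasseOfLA}), from which one reads off that $u_{0}=1$ exactly when $3\nmid n_{4}$ (equivalently $L_{X}^{2}\equiv2\bmod 6$), that in that case the class $\tfrac{1}{3}\bigl(L_{A}-(2\d_{1}+\d_{2})\bigr)$ (or its twin with $\d_{1},\d_{2}$ exchanged, according to $n_{4}\bmod 3$) is integral, hence lies in $\mathrm{NS}(A)$, and that when $9\mid L_{X}^{2}$ the constraint $\gcd(n_{1},n_{2},n_{3},3)=1$ forbids any enlargement. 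Without this (or an equivalent geometric input) your case analysis is an assertion, not a proof. A smaller but real slip: in case $L_{X}^{2}\equiv2\bmod 6$ the generator with square $\tfrac{1}{3}(L_{X}^{2}-2)$ is the glue vector $D_{A}$, not your primitive invariant generator $H_{1}=L_{A}$, whose square is $3L_{X}^{2}$; as written ``$H_{1}^{2}=(L_{X}^{2}-2)/3$'' is false.

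The quaternion statement is also not covered by your argument. $L_{X}^{2}\neq0$ does not imply that $A$ is algebraic: the theorem includes $L_{X}^{2}<0$, where $\mathrm{NS}(A)$ is negative definite and $A$ is a non-algebraic torus, so Albert's classification of endomorphism algebras of abelian surfaces (your ``simple vs.\ $E\times E$'' dichotomy) does not apply there. The paper instead produces the endomorphisms directly from $\mathrm{NS}(A)$: viewing divisor classes as alternating forms on $H_{1}(A,\ZZ)$, the quotients $D^{-1}D'$ are rational representations of endomorphisms, and the explicit matrices give $j$, $\phi$, $\psi$ with $r=1+2j$, $r^{2}=-3$, $r\phi=-\phi r$ and $\phi^{2}=\tfrac{1}{2}L_{X}^{2}$ resp.\ $\tfrac{1}{6}L_{X}^{2}$; this works for every complex $2$-torus with $\rho_{A}=3$ and $L_{A}^{2}\neq0$, algebraic or not, and in addition identifies the actual order and its discriminant, which your argument would not give even in the algebraic case.
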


The above Theorem is proved in Section 2 (see Theorems \ref{thm:i)-NS(A)}
and \ref{Thm:The-endomorphism-ring}). The ring $\End(A)$ contains
the ring of Eisenstein integers (coming from the order $3$ automorphism
group). The quaternion algebra $\End(A)\otimes\QQ$ is indefinite
(i.e. there exists an embedding in $M_{2}(\RR)$) if and only if $L_{X}^{2}>0$.
 Let $\hat{A}$ be the dual complex torus to $A$. Using the knowledge
of $\NS A)$, we obtain in Theorem \ref{THEOREM:bidule} the following
result:
\begin{thm}
\label{THEOREM:NS-unique-genus} Let $(B,G_{B})$ be a generalized
Kummer structure on an algebraic generalized Kummer surface $X=\Km_{3}(A)$.
Then $B$ is isomorphic to $A$ or its dual $\hat{A}$.\\
The abelian surface $A$ admits a principal polarization (and therefore
$A\simeq\hat{A}$) if and only if $L_{X}^{2}=2\mod6$, or $3||L_{X}^{2}$.
\end{thm}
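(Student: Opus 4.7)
\emph{Proof proposal.} The argument splits into two parts; the non-algebraic case $L_{X}^{2}=0$ is already settled earlier (where $\mathrm{N}_{KS}(X)=1$ and no polarization exists), so assume $L_{X}^{2}>0$ throughout.

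For the first assertion, any Kummer partner $(B,G_{B})$ of $X$ yields, via Barth's Hodge-theoretic description of $H^{2}(X,\ZZ)$, an identification of the transcendental sublattice of $H^{2}(B,\ZZ)$ with that of $A$ (up to the standard rescaling relating $T_{X}$ with $T_{A}$), so $T_{B}\simeq T_{A}$ as polarized Hodge structures. By the previous theorem, $\Ns(A)$ and $\Ns(B)$ depend only on $L_{X}^{2}$, hence are isometric; thus $A$ and $B$ are isogenous complex $2$-tori with identical $H^{2}$ Hodge data. Since $L_{X}^{2}>0$, the order $\OO=\End(A)$ lies in an indefinite quaternion algebra and is principal by the result recorded in the abstract. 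A standard argument combining class number one with the duality $A\leftrightarrow\hat{A}$ (which realises on the endomorphism ring the canonical involution of the quaternion order) leaves exactly two possible isomorphism classes of tori compatible with the prescribed Hodge data, namely that of $A$ and that of $\hat{A}$; hence $B\in\{A,\hat{A}\}$.

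For the second assertion, a complex $2$-torus admits a principal polarization if and only if the quadratic form on $\Ns(A)$ represents $2$: by Riemann-Roch, any class $L$ with $L^{2}=2$ becomes effective and ample after possibly replacing $L$ by $-L$, whence $\varphi_{L}:A\to\hat{A}$ is an isomorphism. I would then examine the two intersection matrices of the previous theorem. When $L_{X}^{2}\equiv 2\pmod 6$, writing $L_{X}^{2}=6k+2$, direct inspection of the ternary form $(a,b,c)\,M_{1}\,(a,b,c)^{T}$ produces integer representations of $2$ for every $k\geq 0$ (with small $a\in\{1,2\}$ depending on $k$). When $L_{X}^{2}\equiv 0\pmod 6$ the matrix $M_{2}$ decomposes orthogonally and the representation problem becomes
\[
a^{2}\tfrac{L_{X}^{2}}{6}=1+(b^{2}-bc+c^{2}).
\]
The binary form $b^{2}-bc+c^{2}$ is the norm form of $\ZZ[\omega]$, so its values are $\equiv 0$ or $1\pmod 3$. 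If $9\mid L_{X}^{2}$, the left side is divisible by $3$ while the right side lies in $\{1,2\}\pmod 3$; no solution exists and $A$ carries no principal polarization. If instead $3\,\|\,L_{X}^{2}$, writing $L_{X}^{2}/6=m$ with $3\nmid m$, an explicit representation is produced for every such $m$, establishing existence of a principal polarization.

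The main obstacle is the first assertion: while class number one bounds the isogeny class abstractly, identifying the two resulting classes precisely as $\{A,\hat{A}\}$ requires compatibility of the canonical involution on $\OO$ with the Rosati involution induced by any polarization on $A$. The second assertion is essentially arithmetic, the only delicate point being the congruence analysis separating $3\,\|\,L_{X}^{2}$ from $9\mid L_{X}^{2}$.
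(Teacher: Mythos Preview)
Your proposal has two genuine gaps.

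\textbf{First assertion.} Your route through ``class number one of $\OO=\End(A)$'' is circular: in the paper's logical order, the principality of $\OO$ is established only in Section~5 (Theorem~\ref{thm:une-ou-deux-compo-irred}), and its proof relies on exactly the lattice-theoretic input you are trying to deduce here. Moreover, even granting $h(\OO)=1$, the passage from ``a single class of $\OO$-ideals'' to ``$B\in\{A,\hat{A}\}$'' is not a standard argument; you yourself flag the missing compatibility between the canonical involution on $\OO$ and the Rosati involution. The paper avoids quaternions entirely at this stage: it shows, using Nikulin's criterion (length of the discriminant group versus rank) and, in the $9\mid L_{X}^{2}$ case, a direct check of Miranda--Morrison's regularity conditions, that $\Ns(A)$ is unique in its genus and that $O(\Ns(A))\to O(\mathrm{A}_{\Ns(A)})$ is surjective. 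Then the Hosono--Lian--Oguiso--Yau description of $\FM(A,\Ns(A))$ via the double coset $\mathcal{P}_{A}$ immediately gives $\FM(A)=\{A,\hat{A}\}$.

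\textbf{Second assertion.} Your obstruction argument for $9\mid L_{X}^{2}$ (values of $b^{2}-bc+c^{2}$ mod $3$) is correct and matches the paper. But for the existence direction you assert that ``direct inspection'' or ``an explicit representation'' yields a class of square $2$ for every $k$, and this is not substantiated. For instance, in the case $L_{X}^{2}=6k+2$, taking $x=1$ does \emph{not} always work (try $k=4$), and no uniform formula with $x\in\{1,2\}$ is evident. The paper instead exploits the uniqueness-in-genus result just proved: since the ternary form is alone in its genus, the Hasse--Minkowski principle applies, and the paper then verifies local representability of $2$ at every prime by computing the Hilbert-symbol invariants $d$ and $\varepsilon$ of the associated quaternary form. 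This is the substantive step you are missing.
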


Here the symbol $a||n$ means that $a$ divides the integer $n$ and
the integers $a,\frac{n}{a}$ are coprime. 

The second construction of complex $2$-tori, the Shimura construction
\cite{Shimura}, is of arithmetic nature and will enable us to compute
the number $\mathrm{N}_{KS}(X)$ of Kummer structures effectively.
Let $A$ be a complex $2$-torus with an order $3$ symplectic automorphism
$J_{A}$ and with Picard number $\rho_{A}=3$. The endomorphism ring
of $A$ is an order in a quaternion algebra $H=\frac{(a,b)}{\QQ}$
over $\QQ$, where $H$ is generated by $\a,\b,$ with $\a^{2}=a,\,\b^{2}=b$
and $\a\b=-\b\a$. Since the ring $\End(A)$ contains the ring $\ZZ[J_{A}]$
of Eisenstein integers, there are restrictions on $H$: one can write
$H$ as $H=\frac{(-3,d_{H})}{\QQ}$, where $d_{H}$ is equal to the
discriminant $D_{H}$ of $H$ or to $\frac{1}{3}D_{H}$ according
if $3\nmid D_{H}$ or not. The integer $d_{H}$ is a square-free product
of primes $p$ with $p=2\mod3$. We obtain in Section \ref{subsec:Orders-containing-the-eisen}
that any quaternion order $\OO$ containing the ring of Eisenstein
integers $\ZZ[j]$ (with $j^{2}+j+1=0$) is either isomorphic to a
ring of the form
\[
\OO_{\mu}=\ZZ[j]\oplus\ZZ[j]\mu\phi,
\]
for a quaternion $\phi$ with $j\phi=\phi\bar{j}$, such that $\phi^{2}=d_{H}$,
and for some element $\mu\in\ZZ[j]$, or $\OO$ contains an order
$\OO_{\mu}$ with index $3$, for some $\mu\in\ZZ[j]$. We obtain
moreover that up to conjugation, there is a unique maximal order containing
$j$. Conversely, by the results of Shimura and Shimizu, for a given
order $\OO$, there exist complex tori $A$ having quaternionic multiplication
by $\OO$. 

To an order $\OO$ in an indefinite quaternion algebra, it is classically
associated the Shimura curve $\mathcal{H}/\G(\OO^{1})$, which is
the quotient of the upper half plane by the homographic action of
the group of norm $1$ elements of $\OO$. It is an irreducible component
of the coarse moduli space for abelian surfaces with quaternionic
multiplication by $\OO$. The elliptic points of order $3$ of $\mathcal{H}/\G(\OO^{1})$
are the ramification points of degree three of the natural quotient
map $\mathcal{H}\to X(\OO).$ The number $e_{3}(\OO)$ of elliptic
points of order $3$ is the number of orbits of order $3$ groups
in $\OO^{*}$ under the conjugation by the index $\leq2$ sub-group
$\OO^{1}\subset\OO^{*}$ of norm $1$ element. We obtain the following
results in Theorems \ref{THEOREM:NumberEllPts} and \ref{thm:une-ou-deux-compo-irred}:
\begin{thm}
Consider $\OO=\End(A)$. Suppose that $\cO$ is contained in a skew-field.
The number $\mathrm{N}_{KS}(X)$ of generalized Kummer structures
on the algebraic surface $X=\Km_{3}(A)$ is 
\[
\begin{array}{ll}
\tfrac{1}{2}e_{3}(\OO) & \text{if }L_{X}^{2}=2\mod6\text{ or }3|L_{X}^{2}\text{ but }9\not|L_{X}^{2},\\
2e_{3}(\OO) & \text{if }L_{X}^{2}=18\mod54.\\
\leq2e_{3}(\OO) & \text{if }L_{X}^{2}=18\text{ or }36\mod54.
\end{array}
\]
The ring $\OO$ is principal: the set of left $\OO$-ideals modulo
the principal ideals has a unique element.
\end{thm}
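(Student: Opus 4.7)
My plan is to combine Theorem~\ref{THEOREM:NS-unique-genus} with the Shimura--Shimizu parametrization of abelian surfaces with quaternionic multiplication by $\OO$, and with the identification $\aut(A,0)\cong\OO^{*}$. First I attach to each Kummer structure $(B,G_{B})$ on $X$ the isomorphism type of the underlying complex torus $B$. By Theorem~\ref{THEOREM:NS-unique-genus}, only $B\simeq A$ or $B\simeq\hat{A}$ can occur, and $A\simeq\hat{A}$ corresponds precisely to the cases $L_{X}^{2}\equiv 2\bmod 3$ or $3\| L_{X}^{2}$, while in the remaining case $9\mid L_{X}^{2}$ the two types $A,\hat{A}$ are distinct.

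Next I would count, separately on each type $B\in\{A,\hat{A}\}$, the isomorphism classes of order-$3$ symplectic group actions on $B$. Any order-$3$ symplectic automorphism of $B$ fixes a torsion point, and after a translation defines an order-$3$ element of $\OO^{*}$; conversely every such element yields an order-$3$ symplectic automorphism fixing the origin. So isomorphism classes of pairs $(B,G_{B})$ on each component of the moduli correspond to $\OO^{*}$-conjugacy classes of cyclic order-$3$ subgroups of $\OO^{*}$, whereas the elliptic-point count $e_{3}(\OO)$ records $\OO^{1}$-conjugacy classes. The index $[\OO^{*}:\OO^{1}]$ divides $2$ via the reduced norm, and the nontrivial coset, when present, acts on the elliptic points as the involution exchanging $A$ and $\hat{A}$ (an Atkin--Lehner type involution, coming geometrically from a polarization). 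When $A\simeq\hat{A}$ this extra $\ZZ/2$ acts freely on the order-$3$ subgroups, so passing from $\OO^{1}$- to $\OO^{*}$-orbits divides $e_{3}(\OO)$ by $2$; when $A\not\simeq\hat{A}$ one has $\OO^{*}=\OO^{1}$, and the two moduli components (one for $A$, one for $\hat{A}$) each contribute $e_{3}(\OO)$ structures, totalling $2e_{3}(\OO)$.

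For the principality claim I would invoke Eichler's theorem in the indefinite case ($L_{X}^{2}>0$), which identifies the left class number of $\OO$ with its type number, i.e., the number of isomorphism classes of orders in the genus of $\OO$. Earlier in the paper, the explicit classification of quaternion orders containing $\ZZ[j]$ (the presentations $\OO_{\mu}=\ZZ[j]\oplus\ZZ[j]\mu\phi$, and their index-$3$ overorders) shows that such an order is determined up to isomorphism by its discriminant alone. Thus the type number of $\OO$ is $1$, and Eichler's theorem then yields that the class number of $\OO$ is $1$.

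The main obstacle I anticipate is the precise matching, in the second paragraph, between the involution induced on the set of order-$3$ elliptic points of $\mathcal{H}/\Gamma(\OO^{1})$ by $\OO^{*}/\OO^{1}$ and the geometric duality swap $A\leftrightarrow\hat{A}$, together with the verification that this involution is free on order-$3$ elliptic points exactly when $A\simeq\hat{A}$. This step requires combining the Shimizu correspondence between $\OO$-ideal classes and polarizations on the associated complex tori with the intersection-matrix description of $\Ns(A)$ from the first theorem.
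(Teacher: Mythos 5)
Your counting argument follows the same skeleton as the paper's proof (Proposition \ref{prop:NumberEllPts}): identify the Kummer structures whose underlying torus is $A$ with $\OO^{*}$-conjugacy classes of order-$3$ subgroups, note that $e_{3}(\OO)$ counts $\OO^{1}$-conjugacy classes, and use Theorem \ref{THEOREM:bidule} to restrict $B$ to $\{A,\hat{A}\}$. But the two points you assert or postpone are exactly where the proof lives. First, you tie the dichotomy $\OO^{*}=\OO^{1}$ versus $[\OO^{*}:\OO^{1}]=2$ to the duality question, claiming that a norm $-1$ unit ``exchanges $A$ and $\hat{A}$'' and that $A\not\simeq\hat{A}$ forces $\OO^{*}=\OO^{1}$; neither implication is proved, and the paper does not argue this way. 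It proves, as a separate arithmetic lemma, that $\OO^{*}$ contains a unit of reduced norm $-1$ if and only if $L_{X}^{2}\equiv2\bmod3$ or $3||L_{X}^{2}$ (the ternary discriminant form of $\OO_{\mu}$ represents a prime $\equiv1\bmod4$, following \cite{KohelVerril}, while for $9|L_{X}^{2}$ the norm form cannot equal $-1$ modulo $3$); the coincidence of this condition with the principal-polarization criterion of Theorem \ref{THEOREM:bidule} is what makes the case split work. Second, and more seriously, the halving $|\Delta(\mathcal{C}_{A})|=\tfrac{1}{2}e_{3}(\OO)$ requires that conjugation by a norm $-1$ unit $\g$ fix no element of the set of $e_{3}(\OO)$ $\OO^{1}$-orbits of order-$3$ subgroups (freeness on the subgroups themselves is not enough); you name this as your anticipated obstacle and leave it open, and without it you only get inequalities. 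The paper closes it with a genuinely non-formal input: $\Phi(\g)$ acts antiholomorphically on $\cH$ and induces a real structure on the Shimura curve $\cH/\G(\OO^{1})$, and a Shimura curve attached to a division algebra has no real points (\cite{Ogg}, \cite{Elkies}); hence no order-$3$ elliptic point, and therefore no orbit, is fixed.

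The principality claim is also not correctly handled. Eichler's theorem does not identify the left class number of $\OO$ with its type number; in general the type number is at most the class number, so type number $1$ does not yield $h(\OO)=1$. Moreover Corollary \ref{cor:order-determined-by-discri} classifies only the orders containing $\ZZ[j]$, and you would still need to know that every order in the genus of $\OO$ contains a conjugate of $\ZZ[j]$, so even the type-number assertion is not established by what precedes. The correct arithmetic route would be strong approximation itself: for any order in an indefinite quaternion algebra over $\QQ$, the reduced norm identifies the class set with a quotient of the narrow class group of $\ZZ$, which is trivial. The paper instead proves principality geometrically (Theorem \ref{thm:une-ou-deux-compo-irred}): the lattice-theoretic uniqueness results of Section 2 show that $(\cH\cup\bar{\cH})/\G(\OO^{*})$ is the unique one-dimensional family of abelian surfaces with quaternionic multiplication by $\OO$, and since non-equivalent left $\OO$-ideals produce distinct families in Shimura's construction, the class number must be $1$.
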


 Let us give an example: let $\mu\in\ZZ[j]$ such that $N=\mu\bar{\mu}$
is a square-free integer coprime to $D_{H}$ and to $3$, such that
the primes dividing $N$ are congruent to $1\mod3$, with $H$ indefinite.
Then $\OO_{\mu}$ is an Eichler order of level $N$, and if we take
$A$ such that $\End(A)\simeq\OO_{\mu}$, we obtain that $\mathrm{N}_{KS}(X)=2^{m+\e},$
where $\e\in\{-1,-2\}$ and $m$ is the number of primes dividing
$D_{H}N$ (we suppose here that $D_{H}\neq1$). In particular, the
number of Kummer structures can be made arbitrarily large, and in
two ways: by varying the indefinite quaternion algebra $H$ or the
level $N$. In \cite{CR} with D. Cartwright, we compute the number
$e_{3}(\OO)$ for the orders $\OO$ such that $3|L_{X}^{2}$. 

For $\ell\in\ZZ$ an integer such that $\ell=0\text{ or }2\mod6$,
let $\cM_{\ell}$ be the moduli space of generalized Kummer surfaces
$X=\Km_{3}(A)$ with the class $L_{X}\in\NS X)$ such that $L_{X}^{2}=\ell$.
The moduli space $\cM_{\ell}$ is one dimensional; when $\ell$ is
positive, it is dominated by some Shimura curve by a map of degree
$\mathrm{N}_{KS}(X)$, and when $\ell$ is negative, the space $\cM_{\ell}$
is isomorphic to $\PP^{1}$. The following result is in Theorem \ref{thm:une-ou-deux-compo-irred}
and the Section 5.4; it gives an answer to a question of Barth (\cite[Section 2.4, Problem]{Barth}):
\begin{thm}
\label{thm:Intro}  For $\ell>0$, the moduli space $\cM_{\ell}$
is irreducible.\\
The number of irreducible components of the moduli space $\cM_{\ell}$
goes to $\infty$ when $\ell$ tends to $-\infty$. 
\end{thm}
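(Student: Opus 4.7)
The plan is to treat the two halves of the theorem separately.

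For $\ell>0$, my strategy is to exploit the surjection $\varphi:\cN(\OO)\twoheadrightarrow\cM_{\ell}$ induced by $A\mapsto\Km_{3}(A)$, of degree $\mathrm{N}_{KS}(X)$, recalled in the introduction; here $\cN(\OO)$ is the coarse moduli space of abelian surfaces with quaternionic multiplication by the unique (up to isomorphism) quaternion order $\OO$ determined by $\ell$. This space $\cN(\OO)$ has at most two irreducible components, each of which is a Shimura curve $\cH/\G(\OO^{1})$ and is therefore irreducible. When two components are present they correspond to $A$ and its dual $\hat{A}$, and Theorem~\ref{THEOREM:NS-unique-genus} gives $\Km_{3}(A)=\Km_{3}(\hat{A})$, so $\varphi$ sends the two components onto the same irreducible subset of $\cM_{\ell}$. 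Combined with the surjectivity of $\varphi$, this forces $\cM_{\ell}$ to be irreducible.

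For $\ell<0$ the surface $X$ is non-algebraic, $\mathrm{N}_{KS}(X)=1$, and the quaternion algebra $H=\End(A)\otimes\QQ$ is definite. I would parametrize the components of $\cM_{\ell}$ by the isomorphism classes of the complex $2$-torus $A$ itself. Since $\End(A)$ contains the definite quaternion order $\OO$ attached to $\ell$, the lattice $H_{1}(A,\ZZ)$ is a rank-one projective right $\OO$-module, hence is classified by a right $\OO$-ideal class, while the complex structures compatible with the $\OO$-action form the connected $\PP^{1}$-period domain producing the individual components. Consequently, the number of components of $\cM_{\ell}$ equals, up to a bounded multiplicative factor, the class number $h(\OO)$ of $\OO$.

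To conclude, I invoke the Eichler--Deuring mass formula, according to which the class number of a definite quaternion order of discriminant $d$ tends to infinity with $d$. The intersection matrix of $\NS A)$ from the first displayed theorem has determinant $\ell$, so the discriminant of $\OO=\End(A)$ (which contains $\ZZ[j]$ and is determined by the shape of $\NS A)$) grows without bound as $|\ell|\to\infty$. Hence $h(\OO)\to\infty$, and with it the number of components of $\cM_{\ell}$.

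The main obstacle is the definite case: making the correspondence between components of $\cM_{\ell}$ and right $\OO$-ideal classes fully rigorous, in particular checking that distinct ideal classes yield non-isomorphic pairs $(A,G_{A})$, hence non-isomorphic Kummer surfaces (since $\mathrm{N}_{KS}(X)=1$), and giving an explicit lower bound for the discriminant of $\OO$ in terms of $|\ell|$. Once these two ingredients are in place, the growth of $h(\OO)$ follows from the classical Eichler formula.
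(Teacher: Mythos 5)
For $\ell>0$ your argument assumes exactly the point where the paper has to work. The assertion that the moduli space of abelian surfaces with quaternionic multiplication by $\OO$ ``has at most two irreducible components, each a Shimura curve'' is not available a priori: by the Shimura construction recalled in Section \ref{subsec:Abel-surf-with-QM}, the families of such surfaces are indexed by the classes of left $\OO$-ideals, so your claim is equivalent to $h(\OO)=1$. The orders $\OO_{\mu}$ arising here are Eichler only when $\mu\bar{\mu}$ is square-free and prime to $D_{H}$, so principality is not a standard fact for them; in the paper it is obtained as a \emph{consequence} of the irreducibility argument (Theorem \ref{thm:une-ou-deux-compo-irred}), not used as an input, and quoting the Introduction (``$\cM_{\ell}$ is dominated by some Shimura curve'') is circular since that sentence summarizes the theorem being proved. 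The paper closes this hole lattice-theoretically: $\NS A)$ is unique in its genus and $O(\NS A))\to O(\mathrm{A}_{\NS A)})$ is onto (Theorem \ref{THEOREM:bidule}), hence the embedding $T_{\ell}\hookrightarrow U^{\oplus3}$ and its gluing to $\text{NS}_{\ell}$ are unique up to isometry, so the Hodge structures with the prescribed N\'eron--Severi lattice form a single one-dimensional family, identified with $(\cH\cup\bar{\cH})/\G(\OO^{*})$. Your final step (the two half-plane components, i.e.\ $A$ and $\hat{A}$, have the same image because $\Km_{3}(\hat{A})\simeq\Km_{3}(A)$, which is Corollary \ref{cor:DualAbel} rather than Theorem \ref{THEOREM:NS-unique-genus}) agrees with the paper's, but without a substitute for the uniqueness-of-embedding step, or a direct proof that $h(\OO)=1$, the first half is not proved.

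For $\ell<0$ you take a genuinely different route: the paper counts components as $\G=SO(U\oplus A_{2})$-orbits of the invariant class $L_{A}$, computed in Vinberg's fundamental domain, and gets the growth from the fact that the number of classes in the genus of the definite rank $3$ lattice $\text{NS}_{\ell}$ tends to infinity by the Smith--Minkowski--Siegel mass formula; you instead count ideal classes of the definite order $\OO$ and invoke the Eichler mass formula. These are related by the classical correspondence between quaternion orders and ternary forms, and your route can be made to work, but the gaps you flag are real: (i) $H_{1}(A,\ZZ)$ need not be a projective $\OO$-module when $\OO$ is not maximal, so the correct statement is the Shimizu parametrization by left $\OO$-ideals of Section \ref{subsec:Abel-surf-with-QM}, with distinct families for distinct ideal classes; (ii) one must check that distinct classes give non-isomorphic pairs $(A,G_{A})$, hence (by the uniqueness of the Kummer structure in the non-algebraic case) distinct components, and that every component of $\cM_{\ell}$ arises this way; (iii) the mass formula must be applied to orders that are in general not Eichler. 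On the other hand the discriminant bound you list as missing is already available: by Theorem \ref{Thm:The-endomorphism-ring} the reduced discriminant of $\End(A)$ equals $\tfrac{1}{2}|L_{X}^{2}|$, so it grows linearly in $|\ell|$. What your approach buys, once made rigorous, is an arithmetic count of components (essentially $h(\OO)$ up to a bounded factor), whereas the paper's lattice approach yields in addition the explicit enumeration of the components via the fundamental domain $\Pi$.
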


The assertion on $\cM_{\ell}$ with $\ell<0$, although not difficult
to prove, makes an interesting contrast with the case $\ell>0$; in
fact we obtain much more precise informations on these moduli spaces
(see below). For the proof of Theorem \ref{thm:Intro}, we use some
results of Miranda and Morrison \cite{MirMor1} on embeddings of lattices,
and some results of Hosono, Lian, Oguiso and Yau \cite{HLOY}, \cite{HLOY2}. 

 In \cite{BonGee}, Bonfanti and van Geemen also studied abelian
surfaces $A$ with an order $3$ symplectic automorphism group, in
order to obtain concrete examples of Shimura curves. Some of the results
on $\NS A),\,\End(A)$, which we mention here in case $L_{X}^{2}=0\mod6$
(for $X=\Km_{3}(A)$ algebraic), were previously obtained by them
(see also \cite{BonThe}, by Bonfanti). In \cite{Elkies2}, Elkies
also studies Shimura curves which parametrize the (classical) Kummer
surfaces, associated to abelian surfaces with quaternionic multiplications.
Using the moduli space of generalized Kummer surfaces, it seems possible
to obtain models of Shimura curves. 

The paper is structured as follows. In Section 2, we recall the link
between generalized Kummer structures on $X=\Km_{3}(A)$ and Fourier--Mukai
partners of $A$, when $A$ is algebraic. If $A$ is not algebraic,
the surface $X$ has a unique Kummer structure. For any complex torus
$A$ with Picard number $3$ with a symplectic order $3$ automorphism
group $G_{A}$, we study the class $L_{X}\in\NS X)$ which comes from
the invariant class $L_{A}\in\NS A)$ under the action of $G_{A}$,
and we describe the Néron-Severi group of $A$. We prove that, in
the algebraic case, the Néron-Severi group is unique in its genus,
and if $(B,G_{B})$ is a Kummer structure on $X=\Km_{3}(A)$, then
$B$ is isomorphic to $A$ or its dual $\hat{A}$. For any complex
$2$-tori $A$ with an order $3$ automorphism and Picard number $3$,
we then give generators of the endomorphism ring of $A$ and we obtain
that $\End(A)$ is an order in a quaternion algebra (when $L_{X}^{2}\neq0$);
we express the discriminant of the order $\End(A)$ as a function
of $L_{X}^{2}$. In Section 3, after some preliminaries on quaternions,
we describe -up to isomorphisms- all orders of quaternion algebras
which contain the ring of the Eisenstein integers. We recall the construction
by Shimura of abelian surfaces with quaternionic multiplication, and
its analogue for the non-algebraic case. In Section 4, we join the
results of Section 2 and Section 3, and obtain that the integer $L_{X}^{2}$
determines the quaternion algebra and the endomorphism ring of $A$.
We then compute the number $\mathrm{N}_{KS}(X)$ of Kummer structures
on $X$. In Section 5, we study the number of irreducible components
of the moduli space $\cM_{\ell}$ of generalized Kummer surfaces $X$
with fixed $\ell=L_{X}^{2}$. For each $\ell<0$, we can describe
the irreducible components of $\mathcal{M}_{\ell}$, using the reflexive
rank $4$ lattice $U\oplus A_{2}$ introduced by Barth in \cite{Barth}
for studying generalized Kummer surfaces, and studied by Vinberg in
\cite{Vinberg}. 

\textbf{Acknowledgements.} The author is grateful to Arthur Baragar,
Simon Brandhorst, Cédric Bonnafé, Donald Cartwright, Amir Dzambic,
Will Jagy, David Kohel, Olivier Ramaré and Alessandra Sarti for useful
email exchanges and discussions. The authors is also grateful to the
referee for his numerous comments and remarks that improved this paper.
Support from Centre Henri Lebesgue ANR-11-LABX-0020-01 is acknowledged.

\section{\label{sec:Topology-and-the}Hodge theory: Néron-Severi, Transcendental
lattice of $A$}

\subsection{Notations, conventions}

Given two integers $a,b,$ we write $a||b$ if $a$ divides $b$ and
$a$ is coprime to $b/a$. 

We take the rather useful convention of Conway and Sloane \cite[Chapter 15]{CS}
that $-1$ is a prime, and $\QQ_{-1}=\RR$; we will indicate when
we use that convention. 

For a lattice $\mathbb{L}$ and an integer $n$, $\mathbb{L}(n)$
denotes the group $\mathbb{L}$ with the intersection form of the
lattice multiplied by $n$. \\
The lattice with Gram matrix {\small{}$\left(\begin{array}{ll}
0 & 1\\
1 & 0
\end{array}\right)$} (respectively {\small{}$\left(\begin{array}{ll}
2 & 1\\
1 & 2
\end{array}\right)$}) is denoted by $U$ (respectively by $A_{2}$).

The Néron-Severi group of a surface $X$ is denoted by $\NS X)$,
it is the sub-lattice of classes of divisors in $H^{2}(X,\ZZ)$. The
Picard number of $X$, which is the rank of $\NS X)$, is denoted
by $\rho_{X}$. The set of numerical equivalence classes is denoted
by $\text{Num}(X)$. 

If $X$ is a K3 surface, we will often (by abuse) identify a $\cu$-curve
$C$ (i.e. a smooth rational curve contained in $X$ such that $C^{2}=-2$)
and its class $[C]$ in $\NS X)$. An ${\bf A}_{2}$-configuration
of $\cu$-curves on a K3 surface is the data of two $\cu$-curves
$C,C'$ such that $CC'=1$. 

A standard reference for basics about generalized Kummer surfaces
is the article \cite{Barth} of Barth; one also can refer to \cite{Barth2},
\cite{BonGee}, \cite{RS3}. In the following $A$ is a complex $2$-torus
and $G_{A}$ is a symplectic automorphism group of order $3$, generated
by an element $J_{A}$. Then $X=\Km_{3}(A,G_{A})$ (sometimes also
denoted by $X=\Km_{3}(A,J_{A})$) is the associated generalized Kummer
surface. It is a K3 surface which is the minimal resolution of the
quotient $A/G_{A}$. When there is no confusion, we sometimes denote
simply $X=\Km_{3}(A)$. 

The surface $X=\Km_{3}(A)$ has Picard number $\rho_{X}=16+\rho_{A}\geq18$.
When $\rho_{A}=3$, we denote by $L_{A}$ the generator of the invariant
part under $G_{A}$ of $\NS A)$, and by $L_{X}$ the generator of
the orthogonal complement of the $18$ $(-2)$-curves of the resolution
$X\to A/G_{A}$. When $A$ is algebraic, we take $L_{X},L_{A}$ to
be nef. 


\subsection{\label{subsec:The-N=0000E9ron-Severi-latofA}The Néron-Severi lattice
of $A$}

Let $A$ be a complex $2$-torus such that there exists an order
$3$ symplectic automorphism $J_{A}\in\aut(A)$. We suppose moreover
that $A$ has Picard number $3$ and we denote by $L_{A}$ the generator
of $\NS A)^{J_{A}}$, the invariant sub-lattice for $J_{A}$ (we take
it ample if $L_{A}^{2}>0$). Let $X$ be the associated generalized
Kummer surface (the desingularization of $A/J_{A}$) and let $L_{X}$
be the generator of the orthogonal complement of its natural $9{\bf A}_{2}$-configuration
(we take $L_{X}$ nef if $L_{X}^{2}>0$). Let 
\[
\pi:A\to A/J_{A}
\]
be the quotient map and $q:X\to A/J_{A}$ be the minimal resolution.
Since $L_{A}$ is $J_{A}$-invariant, there exist integers $u_{0},v_{0}$
such that 
\[
\pi^{*}q_{*}L_{X}=u_{0}L_{A},\,\,\pi_{*}L_{A}=v_{0}q_{*}L_{X}
\]
If $L_{A}^{2}>0$ these integers are positive since $L_{A}$ and $L_{X}$
are nef, otherwise, up to exchanging $L_{X}$ with $-L_{X}$, we can
suppose that they are also positive. Since moreover 
\[
\pi_{*}\pi^{*}(q_{*}L_{X})=3q_{*}L_{X},
\]
we get $u_{0}v_{0}=3$, thus $u_{0}\in\{1,3\}$. If $u_{0}=1$, then
\[
L_{X}^{2}=\frac{1}{3}L_{A}^{2}
\]
and if $u_{0}=3$, then 
\[
L_{X}^{2}=3L_{A}^{2}=0\mod6.
\]
By \cite[Section 1.2]{Barth}, the fixed sub-lattice $H_{2}(A,\ZZ)^{J_{A}}$
(containing $L_{A}$) is isomorphic to $U\oplus A_{2}$ i.e. there
exists a basis $\g_{1},\dots,\g_{4}$ with Gram matrix:{\footnotesize{}
\begin{equation}
\left(\begin{array}{lccc}
0 & 1 & 0 & 0\\
1 & 0 & 0 & 0\\
0 & 0 & 2 & 1\\
0 & 0 & 1 & 2
\end{array}\right).\label{eq:interMatrixGamma}
\end{equation}
}{\footnotesize\par}
\begin{prop}
\label{PROP:ClasseOfLA}There exist coprime integers $n_{1},\dots,n_{4}$
such that:\\
i) if $L_{X}^{2}=2\mod6$, one has
\[
L_{A}=\pi^{*}q_{*}L_{X}=3n_{1}\g_{1}+3n_{2}\g_{2}+3n_{3}\g_{3}+n_{4}(\g_{3}+\g_{4}),
\]
with $n_{4}\neq0\mod3$ (thus $u_{0}=1$), and then 
\[
L_{X}^{2}=6(n_{1}n_{2}+n_{3}^{2}+n_{3}n_{4})+2n_{4}^{2}=\frac{1}{3}L_{A}^{2},
\]
ii) if $L_{X}^{2}=0\mod6$, one has
\[
3L_{A}=\pi^{*}q_{*}L_{X}=3\left(n_{1}\g_{1}+n_{2}\g_{2}+n_{3}\g_{3}+n_{4}(\g_{3}+\g_{4})\right),
\]
(thus $u_{0}=3$) with $\gcd(n_{1},n_{2},n_{3},3)=1$, and then 
\[
L_{X}^{2}=6(n_{1}n_{2}+n_{3}^{2}+3n_{3}n_{4}+3n_{4}^{2})=3L_{A}^{2}.
\]
\end{prop}

\begin{rem}
Conversely the choice of such integers $(n_{1},\dots,n_{4})$ defines
a class $L$, which is the invariant polarization $L_{A}$ of some
complex torus $A$ with an order $3$ symplectic automorphism, and
therefore corresponds to some generalized Kummer surface, see \cite[Section 2.4]{Barth}.
\end{rem}

\begin{proof}
By \cite[Section 1.3, Corollary]{Barth}, and the end of \cite[Section 1.2]{Barth},
the primitive class $L_{X}$ is given by 
\[
L_{X}=n_{1}\xi_{1}+\dots+n_{4}\xi_{4},
\]
where the integers $n_{1},\dots,n_{4}$ are coprime, and $\xi_{1},\dots,\xi_{4}$
is a basis of the orthogonal in $H^{2}(X,\ZZ)$ of the $18$ $\cu$-curves
above the $9$ cusp singularities of $A/J_{A}$. The basis is such
that $\pi^{*}q_{*}\xi_{i}=3\gamma_{i}$ for $i\leq3$ and $\pi^{*}q_{*}\xi_{4}=\g_{3}+\gamma_{4}$,
thus:
\[
\pi^{*}q_{*}L_{X}=3n_{1}\g_{1}+3n_{2}\g_{2}+3n_{3}\g_{3}+n_{4}(\g_{3}+\g_{4}).
\]
Therefore if $n_{4}\neq0\mod3$, the integers $n_{1},\dots,n_{4}$
being coprime, the class $\pi^{*}q_{*}L_{X}$ is primitive, and we
have $L_{A}=\pi^{*}q_{*}L_{X}$. If $n_{4}=3n_{4}'$ for some $n_{4}'$,
we must have $\gcd(n_{1},n_{2},n_{3},3)=1$ since we suppose that
the class $L_{X}$ is primitive. Then we have 
\[
\pi^{*}q_{*}L_{X}=3\left(n_{1}\g_{1}+n_{2}\g_{2}+n_{3}\g_{3}+n_{4}'(\g_{3}+\g_{4})\right),
\]
thus $L_{A}=\frac{1}{3}\pi^{*}q_{*}L_{X}$ and the result follows
by formally replacing $n_{4}'$ by $n_{4}$ in the notations, since
we will not be using again the class $L_{X}$ in basis $\xi_{1},\dots,\xi_{4}$.
\end{proof}
According to \cite[Section 2.4, first Proposition]{Barth}, the Néron-Severi
lattice of $A$ contains the lattice $M_{0}$ generated by the invariant
class $L_{A}$ and classes $\d_{1},\d_{2}$, such that $L_{A},\d_{1},\d_{2}$
have Gram matrix{\footnotesize{}
\[
Gr=\left(\begin{array}{ccc}
L_{A}^{2} & 0 & 0\\
0 & -2 & 1\\
0 & 1 & -2
\end{array}\right).
\]
}The classes $\d_{1},\d_{2}$ are non-invariant for the action of
$J_{A}$, in fact: $\d_{2}=J_{A}^{*}\d_{1}$. Let us prove:
\begin{thm}
\label{thm:i)-NS(A)}According to the cases $L_{X}^{2}=2$ or $0\mod6$,
the Néron-Severi lattice of $A$ and the order of its discriminant
group are as follows:

\begin{tabular}{|c|c|c|c|c|}
\hline 
 & $\Ns(A)$ & $L_{X}^{2}$ & $L_{A}^{2}$ & $|\Disc\Ns(A))|$\tabularnewline
\hline 
i) & $\left(\begin{array}{ccc}
2k & 1 & 0\\
1 & -2 & 1\\
0 & 1 & -2
\end{array}\right)$ & $L_{X}^{2}=6k+2$ & $L_{A}^{2}=18k+6=3L_{X}^{2}$ & $L_{X}^{2}$\tabularnewline
\hline 
ii) & $\left(\begin{array}{ccc}
2k' & 0 & 0\\
0 & -2 & 1\\
0 & 1 & -2
\end{array}\right)$ & $L_{X}^{2}=6k'$ & $L_{A}^{2}=2k'=\frac{1}{3}L_{X}^{2}$ & $L_{X}^{2}$\tabularnewline
\hline 
\end{tabular}

In case i), the lattice $\NS A)$ is generated by $D_{A}=\frac{1}{3}(L_{A}-(2\d_{1}+\d_{2}))$
and $\d_{1},\d_{2}$, moreover the discriminant group of $\NS A)$
is cyclic of order $L_{X}^{2}=\frac{1}{3}L_{A}^{2}$. In case ii),
the Néron-Severi lattice of $A$ is generated by $L_{A}$ and $\d_{1},\d_{2}$.
\end{thm}

\begin{rem}
\label{rem:As-observed-byBarth}As observed by Barth, for any integer
$\ell$ with $\ell=6k+2$ or $\ell=6k$ (for some $k\in\ZZ$), there
exists a generalized Kummer surface $X$ with $L_{X}^{2}=\ell$. Indeed,
according if $L_{X}^{2}=2\mod6$ or $L_{X}^{2}=0\mod6$, one has 
\[
L_{X}^{2}=6(n_{1}n_{2}+n_{3}^{2}+n_{3}n_{4})+2n_{4}^{2}\text{ or }L_{X}^{2}=6(n_{1}n_{2}+n_{3}^{2}+3n_{3}n_{4}+3n_{4}^{2}).
\]
 By taking $(n_{1},\dots,n_{4})=(k,1,0,1)$ or $(k,1,0,0)$, we get
all possible $L_{X}^{2}$.
\end{rem}

\begin{proof}
(of Theorem \ref{thm:i)-NS(A)}). The discriminant group of the lattice
$M_{0}$ is generated by $\frac{1}{L_{A}^{2}}L_{A},\,\frac{1}{3}(\d_{1}+2\d_{2})$;
it is isomorphic to $\ZZ/L_{A}^{2}\ZZ\times\ZZ/3\ZZ$. Since $L_{A}$
is primitive, the only possibility for having $\NS X)\neq M_{0}$
is that $3|L_{A}^{2}$ and to construct a class in $\NS A)$ with
the elements $\frac{1}{3}L_{A}$ and $\frac{1}{3}(\d_{1}+2\d_{2})$
of the discriminant group. In \cite[Sections 1.2 \& 2.2]{Barth},
Barth expresses the classes $\d_{1},\d_{2}$ and $L_{A}$ in the basis
\[
\a_{1}\wedge\a_{2},\a_{1}\wedge\b_{1},\a_{1}\wedge\b_{2},\a_{2}\wedge\b_{1},\a_{2}\wedge\b_{2},\b_{1}\wedge\b_{2}
\]
of $H^{2}(A,\ZZ)$, where $\a_{1},\a_{2},\b_{1},\b_{2}$ are generators
of $H_{1}(A,\ZZ)$, and we have 
\[
\begin{array}{l}
\d_{1}=\a_{1}\wedge\a_{2}-\b_{1}\wedge\b_{2},\\
\d_{2}=\a_{1}\wedge\b_{2}-\a_{2}\wedge\b_{1}+\b_{1}\wedge\b_{2}
\end{array}.
\]
Suppose that $L_{X}^{2}=2\mod6$. Then $3|L_{A}^{2}$ and by Lemma
\ref{PROP:ClasseOfLA}:
\[
L_{A}=3n_{1}\g_{1}+3n_{2}\g_{2}+3n_{3}\g_{3}+n_{4}(\g_{3}+\g_{4})
\]
with $n_{4}\neq0\mod3$. By \cite[Section 1.2]{Barth}, we have 
\[
\begin{array}{l}
\g_{1}=-\a_{1}\wedge\b_{1}\\
\g_{2}=\a_{2}\wedge\b_{2}\\
\g_{3}=\a_{1}\wedge\b_{2}+\a_{2}\wedge\b_{1}\\
\g_{4}=\a_{1}\wedge\a_{2}+\a_{1}\wedge\b_{2}+\b_{1}\wedge\b_{2}
\end{array}
\]
and therefore in $H^{2}(A,\ZZ/3\ZZ)$, we have 
\[
\begin{array}{l}
L_{A}-(\d_{1}+2\d_{2})=(n_{4}-1)\a_{1}\wedge\a_{2}+(2n_{4}-2)\a_{1}\wedge\b_{2}+(n_{4}+2)\a_{2}\wedge\b_{1}+(n_{4}-1)\b_{1}\wedge\b_{2},\\
L_{A}-(2\d_{1}+\d_{2})=(n_{4}-2)\a_{1}\wedge\a_{2}+(2n_{4}-1)\a_{1}\wedge\b_{2}+(n_{4}+1)\a_{2}\wedge\b_{1}+(n_{4}+1)\b_{1}\wedge\b_{2}.
\end{array}
\]
We conclude that the class $\frac{1}{3}(L_{A}-(\d_{1}+2\d_{2}))$
(respectively $\frac{1}{3}(L_{A}-(2\d_{1}+\d_{2})$) is in the Néron-Severi
lattice if and only if $n_{4}=1\mod3$ (respectively $n_{4}=2\mod3$).
We take the freedom to permute the role of $\d_{1},\d_{2}$ if necessary,
so that we have the intersection matrix given in the statement of
Theorem \ref{thm:i)-NS(A)}, for both cases $n_{4}=1$ and $n_{4}=2\mod3$.
That proves the result when $L_{X}^{2}=2\mod6$.\\
Suppose now that $L_{X}^{2}=0\mod6$. If $3\nmid L_{A}^{2}$, since
$L_{A}$ is primitive, the Néron-Severi group must be generated by
$L_{A},\d_{1},\d_{2}$. Suppose $3|L_{A}^{2}$ (then $9|L_{X}^{2}$);
since by Proposition \ref{PROP:ClasseOfLA} we have
\[
L_{A}=n_{1}\g_{1}+n_{2}\g_{2}+n_{3}\g_{3}+n_{4}(\g_{3}+\g_{4})
\]
and we know the classes $\g_{i}$, one can check that the only possibility
for having a larger lattice is that $n_{1}=n_{2}=n_{3}=0\mod3$ and
$n_{4}\neq0\mod3$. But by Proposition \ref{PROP:ClasseOfLA}, case
ii), the integers $n_{1},n_{2},n_{3}$ must be coprime to $3$. 
\end{proof}

\subsection{\label{subsec:GenusofNSA} The genus of the Néron-Severi lattice
of $A$}

In this section, we suppose that the surfaces $X,A$ are algebraic. 

For an even lattice $S$, we denote by $\mathrm{A}_{S}$ the discriminant
group $S^{*}/S$ and by $q_{\mathrm{A}{}_{S}}:\mathrm{A}_{S}\to\QQ/2\ZZ$
the quadratic form on $\mathrm{A}_{S}$ induced by the bilinear form
on $S$. The genus $\mathcal{G}(S)$ of $S$ is the set of isomorphism
classes of lattices $S'$ of same rank, same signature, and with isometric
discriminant group: $(\mathrm{A}_{S'},q_{\mathrm{A}{}_{S'}})\simeq(\mathrm{A}{}_{S},q_{\mathrm{A}_{S}})$;
it is a finite set. Let us denote by $O(\mathrm{A}{}_{S})$ the automorphism
group of $\mathrm{A}{}_{S}$ preserving the quadratic form $q_{\mathrm{A}{}_{S}}$.
The orthogonal group $O(S)$ acts on $\mathrm{A}{}_{S}$ through the
natural morphism $O(S)\to O(\mathrm{A}{}_{S})$. One has:
\begin{thm}
\label{thm:UniqueinGenus}Let $X=\Km_{3}(A)$ be an algebraic Kummer
surface. The genus of the lattice $\NS A)$ is $\{\NS A)\}$ and the
natural map $O(\NS A))\to O(\mathrm{A}{}_{\NS A)})$ is surjective. 
\end{thm}

\begin{proof}
Let us recall that, by definition, the length of a finite abelian
group $\mathrm{A}$ is the minimal number of a generating set of $\mathrm{A}$.
If $L_{X}^{2}=2\mod6$, the discriminant group of the rank $3$ lattice
$\NS A)$ is cyclic, thus it has length $\ell=1$. If $3|L_{X}^{2}$
but $9\not|L_{X}^{2}$, then $L_{A}^{2}=\frac{1}{3}L_{X}^{2}$ is
coprime to $3$ and therefore the discriminant group of $\NS A)$
(of order $3L_{A}^{2}=L_{X}^{2})$ has also length $\ell=1$. Then
in these two cases, the inequality 
\[
\text{rank}(\NS A))=3\geq2+\ell
\]
is satisfied and we can apply \cite[Theorem 1.14.2]{Nikulin} to conclude
that the genus of $\NS A)$ is $\{\NS A)\}$ and that the natural
map $O(S)\to O(\mathrm{A}{}_{S})$ is surjective for $S=\NS A)$. 

Suppose now that $9|L_{X}^{2}$; the quadratic form associated to
$\NS A)$ is (in the basis $L_{A},\d_{1},\d_{2}$):
\[
Q(x,y,z)=2(3kx^{2}-y^{2}+yz-z^{2}),
\]
(for $k=\tfrac{1}{9}L_{X}^{2}>0$) and the discriminant group $\mathrm{A}_{\NS A)}$
is (isomorphic to) $\ZZ/3\ZZ\times\ZZ/3k\ZZ$. By \cite[Chapter VIII, Lemma 7.7(1)]{MirMor},
the quadratic form $Q$ is $2$-regular, by \cite[Chapter VIII, Lemma 7.6(2)]{MirMor}
$Q$ is $3$-semiregular, and by \cite[Chapter VIII, Lemma 7.6(1)]{MirMor}
it is $p$-regular for any prime $p\geq5$. One can therefore apply
\cite[Chapter VIII, Theorem 7.5]{MirMor} to conclude that also in
that case the genus of $\NS A)$ is $\{\NS A)\}$ and that the natural
map $O(\NS A))\to O(\mathrm{A}{}_{\NS A)})$ is surjective. 
\end{proof}

\subsection{\label{subsec:The-transcendent-lattice}The transcendental lattice
of $A$}

Let us recall that Proposition \ref{PROP:ClasseOfLA} gives the $J_{A}$-invariant
class of $L_{A}$ in the basis $\g_{1},\dots,\g_{4}$ of the sub-lattice
$H^{2}(A,\ZZ)^{J_{A}}\simeq U\oplus A_{2}$ which is invariant under
the order $3$ automorphism $J_{A}$. The transcendental lattice $T(A)=\NS X)^{\perp}$
of $A$ is also the orthogonal complement of $L_{A}$ in $H^{2}(A,\ZZ)^{J_{A}}$.
We have:
\begin{cor}
\label{cor:Bidule2}Suppose that $A$ is algebraic. The lattice $T(A)(-1)$
is isomorphic to $\NS A)$, in particular $T(A)$ is unique in its
genus.
\end{cor}

\begin{proof}
The lattices $\NS A)$ and $T(A)$ are orthogonal sub-lattices in
$H^{2}(A,\ZZ),$ which is isomorphic to $U^{\oplus3}$ and unimodular,
thus their discriminant groups $\mathrm{A}_{\text{NS}(A)}$ and $\mathrm{A}{}_{T(A)}$
must be isomorphic, and the quadratic forms on $\mathrm{A}_{\text{NS}(A)}$
and $\mathrm{A}{}_{T(A)}$ have the opposite sign. Moreover these
two lattices are of the same rank, with respective signatures $(1,2)$,
$(2,1)$. These conditions imply that $T(A)(-1)$ must be in the genus
of $\NS A)$. Since by Theorem \ref{THEOREM:bidule} the genus contains
only $\NS A)$, the lattice $T(A)(-1)$ is isomorphic to $\NS A)$.
\end{proof}

\subsection{\label{subsec:Fourier-Mukai-partners-and}Fourier--Mukai partners
and generalized Kummer structures}

\subsubsection{Integral Hodge structures, Fourier--Mukai partner, dual abelian
surface}

Recall that for a K3 surface (resp. a complex torus) $X$, a \textit{Fourier--Mukai
partner} of $X$ is a K3 surface (resp. a complex torus) $Y$ such
that there is an isomorphism of Hodge structures
\[
(T(Y),\CC\o_{Y})\simeq(T(X),\CC\o_{X}),
\]
where $\o_{X}$ is a generator of $H^{0}(X,K_{X})$, and $T(X)=\NS X)^{\perp}$
is the transcendental lattice. The set of isomorphism classes of Fourier--Mukai
partners of $X$ is denoted by $\text{FM}(X)$. 

Let $A$ be a complex torus, let $G_{A}$ be an order $3$ automorphism
group of $A$ acting symplectically and let $X=\Km_{3}(A,G_{A})$
be the associated generalized Kummer surface. An isomorphism class
of pairs $(B,G_{B})$ where $B$ is a complex torus (and $G_{B}$
an order $3$ symplectic automorphism group) such that $\Km_{3}(B,G_{B})\simeq X$
is called a \textit{generalized Kummer structure} on $X$. We denote
by $\mathcal{K}(X)$ the set of such isomorphism classes.

Consider the following two assertions:\\
(I) The surface $B$ is a Fourier--Mukai partner of $A$ (i.e. $B\in\text{FM}(A)$).
\\
(II) The surfaces $\Km_{3}(B)$ and $\Km_{3}(A)$ are isomorphic (i.e.
$(B,G_{B})\in\mathcal{K}(X)$). 

The following is the main result of \cite{RS4}:
\begin{thm}
\label{thm:Fourier-Mukai}Let $X=\Km_{3}(A)$ be an algebraic generalized
Kummer surface with Picard number $19$.\\
Suppose that $L_{X}^{2}\neq0,36\mod54$. Then (I) is equivalent to
(II).\\
Suppose that $L_{X}^{2}=0\text{ or }36\mod54$. Then (II) implies
(I), but (I) does not imply (II) in general.
\end{thm}

\begin{rem}
By \cite[Proposition 5.3]{BriMac}, the number $|\text{FM}(A)|$ of
Fourier--Mukai partners of an abelian surface $A$ is finite. 
\end{rem}

 Let $A$ be an abelian surface with Picard number $3$ that possesses
an order $3$ symplectic automorphism $J_{A}$. We denote by $\hat{J}_{A}$
the symplectic order $3$ automorphism of the dual complex abelian
surface $\hat{A}=$$\text{Pic}^{0}(A)$, induced by the action $\mathcal{L}\to J_{A}^{*}\mathcal{L}.$
Let $X=\Km_{3}(A)$ be the associated generalized Kummer surface.
\begin{cor}
\label{cor:DualAbel}Suppose that $L_{X}^{2}\neq0,36\mod54$. Then
$\Km_{3}(A)$ and $\Km_{3}(\hat{A})$ are isomorphic: $(\hat{A},\hat{J}_{A})\in\mathcal{K}(X)$.
\end{cor}

\begin{proof}
By Shioda's Torelli Theorem \cite{Shioda} recalled in Section 5,
there exists an isomorphism of Hodge structures
\[
(T(\hat{A}),\CC\o_{\hat{A}})\simeq(T(A),\CC\o_{A}).
\]
Therefore $A$ and $\hat{A}$ are Fourier--Mukai partners and, using
the hypothesis on $L_{X}^{2}$, we can apply Theorem \ref{thm:Fourier-Mukai}
to the pairs $(A,J_{A}),$ $(\hat{A},\hat{J}_{A})$ to conclude that
$(\hat{A},\hat{J}_{A})\in\mathcal{K}(X)$.
\end{proof}
\begin{rem}
As we will see below, the abelian surface $A$ has a principal polarization
if and only if $L_{X}^{2}=2\mod6$ or $L_{X}^{2}=6k$ with $k\neq0\mod3$.
In that case, let $\phi$ be an isomorphism between $A$ and $\hat{A}$.
The pairs $(\hat{A},\hat{J}_{A})$ and $(A,J_{A}')$ with $J'_{A}=\phi^{-1}\hat{J_{A}}\phi$
define the same Kummer structure, but we do not know if $(A,J_{A}')$
define the same Kummer structure as $(A,J_{A})$. 
\end{rem}

\subsubsection{The generalized Kummer structures come from Fourier--Mukai partners}

Let $A$ be an abelian surface; for a lattice $S$, let us denote
by $\FM(A,S)$ the set of Fourier--Mukai partners $B$ of $A$ such
that $\NS B)\simeq S$. The following result is a consequence of Theorems
\ref{thm:Fourier-Mukai} and \ref{thm:i)-NS(A)}:
\begin{cor}
\label{cor:FMpartnerNS}Let $X=\Km_{3}(A)$ be an algebraic generalized
Kummer surface with Picard number $19$. Let $(B,G_{B})$ be a generalized
Kummer structure of $X$. Then $B\in\FM(A,\NS A))$.
\end{cor}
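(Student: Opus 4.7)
The plan is to combine Theorem \ref{thm:Fourier-Mukai}, which produces the Fourier-Mukai partnership, with the explicit lattice description of $\NS A)$ in Theorem \ref{thm:i)-NS(A)}, which will force $\NS A)\simeq\NS B)$.

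First I would apply Theorem \ref{thm:Fourier-Mukai} to the pairs $(A,G_{A})$ and $(B,G_{B})$ to get directly $\{B\}\in\FM(A)$, together with a Hodge isometry $T(B)\simeq T(A)$. In particular these transcendental lattices are isomorphic as abstract lattices, so $|\Disc T(A))|=|\Disc T(B))|$. Since $A$ and $B$ are abelian surfaces, their second integral cohomology lattices are isomorphic to the unimodular lattice $U^{\oplus 3}$, and $\NS A)$ is primitively embedded in $H^{2}(A,\ZZ)$ with orthogonal complement $T(A)$. The standard formula for orthogonal complements in a unimodular lattice yields $|\Disc\NS A))|=|\Disc T(A))|$ and similarly for $B$, so $|\Disc\NS A))|=|\Disc\NS B))|$.

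Next, because $\Km_{3}(B,G_{B})\simeq X$ has Picard number $19$, the surface $B$ also has Picard number $3$, so Theorem \ref{thm:i)-NS(A)} applies to $B$ as well. Writing $L_{X}^{(A)},L_{X}^{(B)}\in\NS X)$ for the classes orthogonal to the two natural $9\mathbf{A}_{2}$-configurations associated respectively to $(A,G_{A})$ and $(B,G_{B})$, Theorem \ref{thm:i)-NS(A)} reads off $|\Disc\NS A))|=(L_{X}^{(A)})^{2}$ and $|\Disc\NS B))|=(L_{X}^{(B)})^{2}$. Combining with the previous paragraph gives $(L_{X}^{(A)})^{2}=(L_{X}^{(B)})^{2}=:\ell$. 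The residue of $\ell$ modulo $6$ selects between case i and case ii of Theorem \ref{thm:i)-NS(A)}, so $A$ and $B$ fall into the same case, and within each case the Gram matrix exhibited in the theorem depends only on $\ell$. Hence $\NS A)\simeq\NS B)$, and combined with $\{B\}\in\FM(A)$ this is exactly $B\in\FM(A,\NS A))$.

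The main subtlety, as I see it, is that $L_{X}^{(A)}$ and $L_{X}^{(B)}$ are a priori different elements of $\NS X)$, arising from potentially non-conjugate $9\mathbf{A}_{2}$-configurations, so there is no direct geometric reason on $X$ for their self-intersections to agree. The argument sidesteps this by routing the comparison through the discriminant of the transcendental lattice, which is forced to be common to $A$ and $B$ by Fourier-Mukai partnership; Theorem \ref{thm:i)-NS(A)} then interprets this common discriminant back as $L_{X}^{2}$ on each side, and the rigidity of the Gram matrix in $\ell$ closes the loop.
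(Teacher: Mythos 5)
Your proof is correct. You and the paper share the same skeleton: Theorem \ref{thm:Fourier-Mukai} gives $B\in\FM(A)$, and Theorem \ref{thm:i)-NS(A)} is then used to conclude that the N\'eron--Severi lattice is determined by the self-intersection of the distinguished class, so that $\NS B)\simeq\NS A)$. The difference lies in how the key equality $(L_{X}^{(A)})^{2}=(L_{X}^{(B)})^{2}$ is obtained. The paper works on the K3 side: it invokes the fact (quoted from \cite{RS3}) that the rank $18$ orthogonal complement of $L_{X}$ in $\NS X)$ is, up to isomorphism, independent of $L_{X}^{2}$, so that $L_{X}^{2}$ is an invariant of the surface $X$ itself and not of the chosen $9\mathbf{A}_{2}$-configuration. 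You instead work on the torus side: the Hodge isometry $T(B)\simeq T(A)$ coming from the Fourier--Mukai partnership, together with the primitivity of $\NS A)$ in the unimodular lattice $H^{2}(A,\ZZ)\simeq U^{\oplus3}$, forces $|\Disc\NS A))|=|\Disc\NS B))|$, and the last column of the table in Theorem \ref{thm:i)-NS(A)} identifies these discriminants with the two self-intersections. Your route has the advantage of being self-contained within the results already proved or quoted in Section 2 (it does not need the external input from \cite{RS3}), and you correctly isolate the genuine subtlety, namely that the two classes $L_{X}^{(A)},L_{X}^{(B)}$ come from possibly non-conjugate configurations on $X$; the paper's route, by contrast, yields the slightly stronger geometric statement that $L_{X}^{2}$ is intrinsic to $X$. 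One small point you pass over (as does the paper, since the section assumes algebraicity): to apply Theorem \ref{thm:Fourier-Mukai} and the nondegeneracy of $\NS B)$ you need $B$ to be an abelian surface, which indeed follows since $\Km_{3}(B,G_{B})\simeq X$ is projective.
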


\begin{proof}
By Theorem \ref{thm:Fourier-Mukai}, the surface $B$ is a Fourier--Mukai
partner of $A$. The rank $18$ orthogonal complement of $L_{X}^{2}$
in $\NS X)$ is -up to isomorphism- independent of $L_{X}^{2}$ (see
\cite{RS3}). That implies that $L_{X}^{2}=L_{Y}^{2}$ for $Y=\Km_{3}(B)\simeq\Km_{3}(A)$.
By Theorem \ref{thm:i)-NS(A)}, the Néron-Severi group of $A$ (respectively
$B$) is uniquely determined by the integer $L_{X}^{2}$ (respectively
$L_{Y}^{2}$), and therefore $\NS A)\simeq\NS B)$. 
\end{proof}
For a $2$-dimensional complex torus  $A$, let us denote by $G_{\text{Ho}}$
the group of Hodge isometries of the transcendental  lattice $T(A)$.
It acts on $O(\mathrm{A}{}_{S})$ through the natural isomorphism
$(\mathrm{A}{}_{S},-q_{\mathrm{A}{}_{S}})\simeq(\mathrm{A}{}_{T},q_{\mathrm{A}{}_{T}})$,
where $S=\NS A)$ and $T=T(A)=\NS A)^{\perp}$ are contained in $H^{2}(A,\ZZ)\simeq U^{\oplus3}$.
Related to Corollary \ref{cor:FMpartnerNS}, let us recall the following
result:
\begin{thm}
\label{thm:()-HOLOY2}(\cite[Theorem 2.3]{HLOY2} and \cite[Section 2, Claim]{HLOY})
Let $\mathcal{P}_{A}$ be the double coset $\mathcal{P}_{A}=O(\NS A))\setminus O(\mathrm{A}{}_{\NS A)})/G_{\text{Ho}}$.
There exists a map 
\[
\xi:\FM(A,\NS A))\to\mathcal{P}_{A},
\]
which is onto and such that $\xi^{-1}(\xi([B]))=\{B,\hat{B}\}$.
\end{thm}

Let $\hat{A}$ be the dual torus of $A$. Define $X=\Km_{3}(A)$. 
\begin{thm}
\label{THEOREM:bidule}We have $\FM(A)=\FM(A,\NS A))=\{A,\hat{A}\}$,
in particular if $(B,G_{B})$ is a generalized Kummer structure, then
either $B\simeq A$ or $B\simeq\hat{A}$. \\
The surface $A$ has a principal polarization if and only if $L_{X}^{2}=2\mod6$
or $3||L_{X}^{2}$. \\
Suppose that $L_{X}^{2}=6k$ with $k=3\mod9$. Then $(A,J_{A})$ and
$(\hat{A},\hat{J}_{A})$ are two distinct Kummer structures on $\Km_{3}(A)$. 
\end{thm}

\begin{proof}
By Theorem \ref{thm:UniqueinGenus}, the map $O(\NS A))\to O(\mathrm{A}{}_{\NS A)})$
is onto, therefore $|\mathcal{P}_{A}|=1$ and by Theorem \ref{thm:()-HOLOY2},
$\text{FM}(A,\NS A))$$=\{A,\hat{A}\}$. If $B$ is a Fourier--Mukai
partner of $A$, then $T(B)\simeq T(A)$. By Corollary \ref{cor:Bidule2},
one has $T(A)(-1)\simeq\NS A)$ and $T(B)(-1)\simeq\NS B)$, thus
$B\in\text{FM}(A,\NS A))$. We thus obtain that 
\[
\text{FM}(A)=\text{FM}(A,\NS A))=\{A,\hat{A}\}.
\]
By Corollary \ref{cor:FMpartnerNS}, if $(B,G_{B})$ is a generalized
Kummer structure one has $B\in\FM(A)$, thus either $B\simeq A$ or
$B\simeq\hat{A}$.

Suppose that $L_{X}^{2}=2\mod6$ or $3||L_{X}^{2}$. If the quadratic
forms
\begin{equation}
(xD_{A}+y\d_{1}+z\d_{2})^{2}\text{ or }(xL_{A}+y\d_{1}+z\d_{2})^{2}\label{eq:lxeq2mod6}
\end{equation}
(according to the cases) associated to the lattice $\NS A)$ represents
$2$, then the surface $A$ has a principal polarization, and therefore
$A$ is isomorphic to its dual $\hat{A}$. We know that this quadratic
form is unique in its genus, therefore one can apply the Hasse-Minkowsky
principle: if there exists a local solution for each prime and over
$\RR,$ then there exists an integral (global) solution. That can
be checked using \cite[Chapitre 4]{Serre}. We leave it to the reader;
a complete proof may be found on arXiv version 1 of this paper.

Suppose now that $9|L_{X}^{2}$ and let $k'\in\NN$ be such that $L_{A}^{2}=3k'$.
The integers represented by the quadratic form $y^{2}-yz+z^{2}$ are
congruent to $0$ or $1\mod3$, thus the equation $3k'x^{2}-(y^{2}-yz+z^{2})=1$
has no solution modulo $3$, and therefore, there is no solution to
equation $(xL_{A}^{2}+y\d_{1}+z\d_{2})^{2}=2$: $A$ has no principal
polarization. 

Suppose that $L_{X}^{2}=6k$ with $k=3\mod9$. By Shioda's Torelli
Theorem \cite{Shioda}, $A$ and $\hat{A}$ are Fourier--Mukai partners.
Then by implication (I) $\Rightarrow$ (II) of Theorem \ref{thm:Fourier-Mukai},
the pair $(\hat{A},\hat{J}_{A})$ is a generalized Kummer structure
of $X=\Km(A,J_{A})$. Since $A$ and $\hat{A}$ are not isomorphic,
the generalized Kummer structures $(A,J_{A})$ and $(\hat{A},\hat{J}_{A})$
are distinct. 
\end{proof}
\begin{rem}
In case $k=0\mod9$ or $k=6\mod9$, we do not know if $(\hat{A},\hat{J}_{A})$
is a generalized Kummer structure of $X=\Km(A,J_{A})$.
\end{rem}

\subsection{Non-algebraic case: a unique generalized Kummer structure}

Let $(A,G_{A}),(B,G_{B})$ be two non-algebraic complex $2$-tori
with an order $3$ automorphism group. 
\begin{prop}
Suppose that $\Km_{3}(A)\simeq\Km_{3}(B)$. Then there exists an isomorphism
$\tau:A\to B$ such that $\tau^{-1}G_{B}\tau=G_{A}$. In particular
$\Km_{3}(A)$ has a unique generalized Kummer structure. 
\end{prop}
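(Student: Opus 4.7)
The plan is to mirror the proof of Theorem \ref{thm:Fourier-Mukai} but to exploit the fact that for non-algebraic complex $2$-tori, Hodge-theoretic Fourier-Mukai equivalence already forces an isomorphism of tori, so that the two possibilities $B\simeq A$ and $B\simeq\hat A$ of the algebraic case collapse to a single one. First, from an isomorphism $\f: \Km_{3}(A) \simeq \Km_{3}(B) =: X$ one extracts a Hodge isometry of transcendental lattices $T(A) \simeq T(B)$: the construction of $\Km_{3}(A)$ only modifies the Néron-Severi part of $H^{2}(X,\ZZ)$ (by adding the $18$ classes of the cusp resolutions), and the degree-$3$ quotient $A\to A/G_{A}$ induces on transcendental parts an isomorphism of rational Hodge structures $T(X)_{\QQ}\simeq T(A)_{\QQ}$; this is exactly the Hodge-theoretic step of \cite{RS3} recalled in Theorem \ref{thm:Fourier-Mukai}, which does not use algebraicity.

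Second, I would promote this Hodge isometry $T(A)\simeq T(B)$ to an integral Hodge isometry $H^{2}(A,\ZZ)\simeq H^{2}(B,\ZZ)$ preserving the orientation on the positive part of the cohomology. This is the key place where the hypothesis that $A$ (and hence $B$) is non-algebraic intervenes: in the algebraic case such an extension can fail and each failure yields a distinct Fourier-Mukai partner, but in the non-algebraic case $\NS A)$ contains no class of positive self-intersection, and the rank-three lattice generated by $L_{A}$ together with the two classes coming from the $G_{A}$-action analyzed in Section \ref{sec:Topology-and-the} is unique in its genus with trivial discriminant-form gluing choices, so the extension exists and is essentially unique. Shioda's Torelli theorem for complex $2$-tori then lifts the resulting Hodge isometry to an isomorphism $\tau_{0}: A \to B$; the dual-torus ambiguity from Corollary \ref{cor:DualAbel} is vacuous here because for a non-algebraic $A$ the transcendental Hodge structure alone determines whether one lands on $A$ or $\hat A$, and no twisting by a polarization is available to distinguish them.

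Finally, after composing $\tau_{0}$ with a translation of $B$ so that the origin is preserved, I would verify $\tau^{-1}G_{B}\tau = G_{A}$. Both $G_{A}$ and $\tau^{-1}G_{B}\tau$ are order $3$ symplectic subgroups of $\aut(A)$ fixing the origin, acting on $H^{1,0}(A)$ via the primitive cube root of unity $j$; the induced $9\mathbf{A}_{2}$-configurations on $X$ coincide by construction of $\tau$ from $\f$, and this rigidifies the two subgroups to the same element in $\aut(A)$. The main obstacle is the extension step from $T(A)$ to $H^{2}(A,\ZZ)$: one must carefully check that the discriminant forms on the Néron-Severi and transcendental lattices glue uniquely even in the degenerate sub-cases $L_{X}^{2}\leq 0$ that arise in the non-algebraic setting — a genus-uniqueness computation specific to this regime, where the algebraic analogue genuinely admits extra FM partners.
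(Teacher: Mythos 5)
Your route is genuinely different from the paper's, and two of its key steps fail. First, the claim that in the non-algebraic case $\NS A)$ is ``unique in its genus with trivial discriminant-form gluing choices,'' so that the Hodge isometry $T(A)\simeq T(B)$ extends (essentially uniquely) to $H^{2}(A,\ZZ)\simeq H^{2}(B,\ZZ)$, is exactly backwards: the paper's Section 5 shows that for $\ell=L_{X}^{2}<0$ the embedding $\text{NS}_{\ell}\hookrightarrow U^{\oplus3}$ is in general far from unique, the genus of $\NS A)$ can have many classes (see the tables and the example $L_{X}^{2}=-84$, where the transcendental lattice $T_{1}$ is not even isomorphic to $\NS A)(-1)$), and glue-ups are explicitly non-unique when $\ell<0$. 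So the lattice-theoretic uniqueness you invoke is available precisely in the algebraic regime (via Nikulin/Miranda--Morrison, Theorem \ref{THEOREM:bidule}) and not in the non-algebraic one. Second, the assertion that ``for a non-algebraic $A$ the transcendental Hodge structure alone determines whether one lands on $A$ or $\hat A$'' contradicts Shioda's Torelli theorem as recalled in Theorem \ref{thm:Shioda}: $A$ and $\hat A$ always have isometric weight-$2$ Hodge structures on $H^{2}$, hence on $T$, so this datum can never distinguish them; and with no polarization at hand you cannot identify $A$ with $\hat A$ either. You also lean on Theorem \ref{thm:Fourier-Mukai}, which is stated (and proved in \cite{RS3}) for abelian surfaces; at least its converse direction uses projectivity in an essential way, so ``the Hodge-theoretic step does not use algebraicity'' needs justification rather than assertion.

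What your proposal is missing is the geometric mechanism that the paper actually uses and that makes non-algebraicity decisive: one determines \emph{all} smooth rational curves on $X$. When $\NS A)$ is negative definite, a root computation in $\NS X)$ (for $\rho_{X}=18$ or $19$) shows the only irreducible curves on $X$ are the $18$ exceptional $\cu$-curves; when $\NS A)\to\text{Num}(A)$ has a kernel, $A$ fibers over an elliptic curve, the induced elliptic fibration on $X$ has exactly three $\tilde{\mathbf{E}}_{6}$ fibers and no multisections, and among the resulting $21$ rational curves there is a unique set of $18$ forming $9$ disjoint $\mathbf{A}_{2}$-configurations. Consequently any isomorphism $\Km_{3}(B)\to\Km_{3}(A)$ must match these configurations, and since $H^{1}(X,\ZZ)=0$ there is a unique cyclic triple cover branched over the nine $\mathbf{A}_{2}$'s, from which $A$ (with its $G_{A}$-action) is recovered; this yields $\tau$ with $\tau^{-1}G_{B}\tau=G_{A}$ directly. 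Your final ``rigidification'' step, by contrast, is asserted rather than proved. To repair your argument you would have to replace the transcendental-lattice detour by an analysis of how the isometry of the full $H^{2}(X,\ZZ)$ interacts with the effective cone --- which is essentially the paper's curve-counting argument.
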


\begin{proof}
The Néron-Severi group of $X=\Km_{3}(A)$ has rank $\rho_{X}=18$
or $19$. The lattice $\NS X)$ is described in \cite[Section 2]{RS3},
(using \cite[Proof of Proposition 1.3]{Bertin}), where this is stated
for abelian surfaces, but it is valid more generally for complex $2$-tori
since the proof only uses algebra. With the notations of \cite{RS3},
in case $\rho_{X}=18$, one has $\NS X)=\mathcal{K}_{3}$, otherwise
$\NS X)$ is given in \cite[Theorem 2]{RS3}. 

Suppose that $\NS X)$ is negative definite (this is equivalent to
suppose that the map $\NS A)\to\text{Num}(A)$ has trivial kernel).
By computing the roots (the elements of square $-2$) of $\NS X)$,
one finds that in both cases $\rho_{X}=18$ or $19$, the only irreducible
curves on $X$ are the $18$ $(-2)$-curves coming from the desingularization
of $A/G_{A}$. Since $H_{1}(X,\ZZ)=\{0\}$ on K3 surfaces, the cyclic
triple cover $\eta_{A}$ of $X$ branched over the $2\cdot9$ curves
in the exceptional locus of the minimal resolution $X\to A/G_{A}$
is unique (see \cite[Chapter I, Lemma 17.1]{BHPV}), and from that
triple cover we may recover $A$. Since there are only $18$ curves
on $\Km_{3}(A)$ and $\Km_{3}(B)$, the isomorphism $\phi:\Km_{3}(B)\to X=\Km_{3}(A)$
must send the $18$ $\cu$-curves of $\Km_{3}(B)$ to the $18$ $\cu$-curves
of $X$. The maps $\phi\circ\eta_{B}$ and $\eta_{A}$ are triple
covers with same branch locus, therefore by the unicity of the triple
cover, the surface $B$ is isomorphic to $A$ and we obtain that $\Km_{3}(A)$
possesses a unique generalized Kummer structure. 

Suppose that the map $\NS A)\to\text{Num}(A)$ has a non-trivial kernel.
Let $F$ be a generator of that kernel. One has $F^{2}=0$, therefore
there exists a fibration $\varphi:A\to E$ of the complex torus onto
an elliptic curve. The fibration $\varphi$ induces an elliptic fibration
$\varphi':X=\Km_{3}(A)\to\PP^{1}$ and the $9$ fixed points of $G_{A}$
(which are $3$-torsion points of $A$) induce singular fibers on
$X$. Using Kodaira's Table \cite[Chapter V, Section 7]{BHPV}, the
singular fibers $F$ of an elliptic fibration have Euler number $e(F)$
as follows:\\
\begin{tabular}{|c|c|c|c|c|c|c|c|c|}
\hline 
$F$ & $I_{n},n\geq1$ & $II$ & $III$ & $IV$ & $I_{n}^{*},n\geq0$ & $\tilde{{\bf E}}_{6}$ & $\tilde{{\bf E}}_{7}$ & $\tilde{{\bf E}}_{8}$\tabularnewline
\hline 
$e(F)$ & $n$ & $2$ & $3$ & $3$ & $6+n$ & $8$ & $9$ & $10$\tabularnewline
\hline 
\end{tabular}\\
The fibers $I_{1},I_{2},II,III$ do not contain a ${\bf A}_{2}$-configuration
i.e. two $\cu$-curves $C,C'$ such that $CC'=1$ (see Kodaira's Table)
and the Euler number of a fiber containing (at least) a ${\bf A}_{2}$
configuration is $\geq3$. By \cite[Chapter III, Proposition 11.4]{BHPV},
since the Euler number of a K3 surface $X$ is $24$, one has 
\[
24=\sum_{s\in\PP^{1}}e(F_{s})
\]
where $F_{s}$ is the fiber over $s$ of an elliptic fibration $X\to\PP^{1}$
(one has $e(F_{s})=0$ if $F_{s}$ is smooth). Therefore, for example,
the nine ${\bf A}_{2}$-configurations cannot be spread into $9$
distinct fibers of $\varphi'$. Also from the Kodaira Table, the $9$
fixed points of $G$ cannot belong to the same fiber of $\varphi$.
Using that, and the fact that the image of a $3$-torsion group by
$\varphi$ must be a $3$-torsion group, we obtain that the unique
possibility for the $9$ fixed points of $G_{A}$ is that they are
distributed by $3$ on $3$ fibers on $A$, and that the elliptic
fibration $\varphi'$ has $3$ singular fibers of type $\tilde{\mathbf{E}}_{6}$
(each such fiber has $7$ irreducible components), and $\varphi'$
has no other singular fibers. 

There cannot be curves that are multi-sections, otherwise there would
exist a divisor of square $>0$. Thus the surface $X$ contains exactly
$21$ $\cu$-curves. Moreover, using Kodaira's Table, among these
$21$ curves, there is a unique set of $18$ curves that form $9$
disjoint $\mathbf{A}_{2}$-configurations. Then we can use the same
argument as in case when $\NS X)$ is negative definite to conclude
that $\Km_{3}(A)$ has also a unique generalized Kummer structure. 
\end{proof}

\subsection{\label{subsec:The-endomorphism-ring}The endomorphism ring of $A$}

Let $(A,J_{A})$ be a complex torus and an order $3$ automorphism.
Let $X=\Km_{3}(A)$ be the associated generalized Kummer surface.
For $a,b\in\QQ$, $ab\neq0$, let us denote by $\frac{(a,b)}{\QQ}$
the quaternion algebra generated by $r,\phi$ such that $r^{2}=a,\phi^{2}=b,$
$r\phi=-\phi r$. Let us prove the following result 
\begin{thm}
\label{Thm:The-endomorphism-ring}i) Suppose that $L_{X}^{2}=2\mod6$.
The endomorphism ring of $A$ contains elements $j=J_{A},r,\phi,\psi$,
such that 
\[
r=1+2j,\,\,\,r^{2}=-3,\,\,\,\phi^{2}=\frac{1}{2}L_{X}^{2},\,\,\,r\phi=-\phi r,\,\,\,\psi=\frac{r}{3}(\phi-1),
\]
with $\psi^{2}=\frac{1}{6}(L_{X}^{2}-2)\in\End(A)$. Moreover for
general $A$, we have
\[
\End(A)=\ZZ[j,\psi],\,\,\,\,\End_{\QQ}(A)=\frac{(-3,\frac{1}{2}L_{X}^{2})}{\QQ}.
\]
ii) Suppose that $L_{X}^{2}=0\mod6$. The endomorphism ring of $A$
contains elements $j=J_{A},r,\phi$, such that 
\[
r=1+2j,\,\,\,r^{2}=-3,\,\,\,\phi^{2}=\frac{1}{6}L_{X}^{2},\,\,\,r\phi=-\phi r.
\]
Moreover for general $A$, we have
\[
\End(A)=\ZZ[j,\phi],\,\,\text{and if }L_{X}^{2}\neq0\text{ then }\,\End_{\QQ}(A)=\frac{(-3,\frac{1}{6}L_{X}^{2})}{\QQ}.
\]
In both cases i) and ii) with $L_{X}^{2}\neq0$, the discriminant
of the quaternion order $\End(A)$ is equal to $\frac{1}{2}L_{X}^{2}$. 
\end{thm}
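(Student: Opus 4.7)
The plan is to first establish the algebraic identities, then construct the quaternionic element $\phi$ from the non-invariant classes $\d_{1},\d_{2}\in\NS A)$ furnished by Theorem \ref{thm:i)-NS(A)}, and finally to identify $\End(A)$ for generic $A$ by combining Albert's classification with the uniqueness-of-orders result to be proved in Section~3.

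Since $J_{A}$ is a symplectic automorphism of order $3$, it acts on $H^{1,0}(A)\simeq\CC^{2}$ with eigenvalues $\zeta,\zeta^{-1}$ (a primitive cube root of unity and its conjugate), and so the minimal polynomial of $j=J_{A}$ on $H^{1}(A,\QQ)$ is $X^{2}+X+1$, giving $j^{2}+j+1=0$ and $r^{2}=(1+2j)^{2}=-3$. This already gives $\ZZ[j]\subset\End(A)$. To produce $\phi\in\End_{\QQ}(A)$ anti-commuting with $r$, I would use the non-invariant classes $\d_{1},\d_{2}$ via the standard map $D\mapsto\phi_{D}\in\Hom(A,\hat{A})$, which extends $\QQ$-linearly to an injection $\NS A)\otimes\QQ\hookrightarrow\Hom(A,\hat{A})\otimes\QQ$; pre-composition with $\phi_{L_{A}}^{-1}$ (or its isogeny-category analogue when $L_{A}^{2}=0$) defines a $\QQ$-linear map $\eta\colon\NS A)\otimes\QQ\to\End_{\QQ}(A)$. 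Using $J_{A}^{*}\d_{1}=\d_{2}$ and $(J_{A}^{*})^{2}+J_{A}^{*}+1=0$ on the non-invariant sub-lattice, one checks that $\eta$ is equivariant for the conjugation action of $j$ on $\End_{\QQ}(A)$; a suitable integral combination $\phi=\eta(\a\d_{1}+\b\d_{2})$, normalized to be primitive in the order it generates with $j$, then satisfies $r\phi=-\phi r$.

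The value of $\phi^{2}$ is read off from the Rosati involution associated to $L_{A}$: $\eta$ intertwines the intersection form on $\NS A)$ with the reduced-norm form on $\End_{\QQ}(A)$, up to a global scalar fixed by the normalization $\eta(L_{A})\in\QQ\cdot 1$. Substituting the Gram matrix from Theorem \ref{thm:i)-NS(A)} then gives $\phi^{2}=\tfrac{1}{2}L_{X}^{2}$ in case (i) and $\phi^{2}=\tfrac{1}{6}L_{X}^{2}$ in case (ii). In case (i), the integrality of $\psi=\tfrac{r}{3}(\phi-1)$ is supplied by the extra divisor $D_{A}=\tfrac{1}{3}(L_{A}-(2\d_{1}+\d_{2}))\in\NS A)$ from Theorem \ref{thm:i)-NS(A)}; expanding $\psi^{2}$ with $r\phi=-\phi r$ and $r^{2}=-3$ gives the identity $\psi^{2}=\tfrac{\phi^{2}-1}{3}=\tfrac{L_{X}^{2}-2}{6}$ directly.

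For generic $A$ with $\rho_{A}=3$, Albert's classification of endomorphism algebras of abelian surfaces forces $\End_{\QQ}(A)$ to be a quaternion algebra over $\QQ$ (CM quartic fields occur only for isolated $A$), and this algebra therefore equals $\QQ\la j,\phi\ra=\frac{(-3,\phi^{2})}{\QQ}$. The identification $\End(A)=\ZZ[j,\psi]$ (case i) or $\ZZ[j,\phi]$ (case ii) then follows by computing the discriminant of the displayed lattice in the reduced-trace pairing: it equals $\tfrac{1}{2}L_{X}^{2}$, matching the discriminant of $\End(A)$, and by the uniqueness of quaternion orders $\OO\supset\ZZ[j]$ with a given discriminant (Section~3) the two orders coincide. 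The main obstacle is the explicit computation $\phi^{2}=\tfrac{1}{2}L_{X}^{2}$ (resp.\ $\tfrac{1}{6}L_{X}^{2}$), which requires careful bookkeeping of the factor of $3$ coming from the $3:1$ quotient $A\to A/G_{A}$ (the source of the case split via Proposition \ref{PROP:ClasseOfLA}), the Rosati normalization relative to $L_{A}$, and the primitivity of $\phi$ in the order it generates with $j$.
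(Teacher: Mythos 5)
Your outline shadows the paper's proof in several places (interpreting $\NS A)$ inside $\mathrm{Hom}(A,\hat{A})$, building $\phi$ out of $\d_{1},\d_{2},L_{A}$, using the extra class $D_{A}$ for $\psi$, ending with a discriminant computation), but two steps have genuine gaps. First, the pivot $\eta(D)=\phi_{L_{A}}^{-1}\phi_{D}$ is the wrong one: when $L_{A}^{2}=0$ (which occurs in case ii), $\phi_{L_{A}}$ is not an isogeny and has no ``isogeny-category analogue'', so your construction does not even start there; and for $L_{A}^{2}\neq0$ it only lands in $\End_{\QQ}(A)$, so the integrality of $\phi$ and of $\psi=\tfrac{r}{3}(\phi-1)$ asserted in the statement is not addressed by ``normalized to be primitive''. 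Moreover the exact value $\phi^{2}=\tfrac12L_{X}^{2}$ does not follow by ``substituting the Gram matrix'': with your normalization, $\eta(\a\d_{1}+\b\d_{2})$ is traceless and its square is governed by $(\a^{2}-\a\b+\b^{2})/L_{A}^{2}$, so hitting exactly $\tfrac12L_{X}^{2}=\tfrac16L_{A}^{2}$ in case i) amounts to solving the norm equation $\a^{2}-\a\b+\b^{2}=(L_{A}^{2})^{2}/12$, after which integrality still has to be proved; you yourself flag this as ``the main obstacle'', but it is precisely the content of the theorem. The paper avoids all of this with a different pivot: $\d_{1}$ is unimodular as an alternating form on $H_{1}(A,\ZZ)$, so $\d_{1}^{-1}D'\in\End(A)$ integrally for every $D'\in\NS A)$; one sets $j=\d_{1}^{-1}\d_{2}$, $\phi=r^{-1}\d_{1}^{-1}L_{A}$ (resp. $\d_{1}^{-1}L_{A}$), $\psi=\tfrac13\d_{1}^{-1}(L_{A}-\d_{1}-2\d_{2})$, and verifies $r\phi=-\phi r$ and the exact values of $\phi^{2}$ by the explicit matrices in the coefficients $n_{1},\dots,n_{4}$ of $L_{A}$ from Proposition \ref{PROP:ClasseOfLA}. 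This works uniformly in the non-algebraic and $L_{X}^{2}=0$ cases, where your appeal to Albert's classification (valid for abelian surfaces, not for arbitrary complex $2$-tori) also does not apply.

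Second, your identification $\End(A)=\ZZ[j,\psi]$ is circular: you compare the discriminant of $\ZZ[j,\psi]$ with ``the discriminant of $\End(A)$'', but $D_{\End(A)}=\tfrac12L_{X}^{2}$ is a conclusion of the theorem, not an available input, and Corollary \ref{cor:order-determined-by-discri} (orders containing $\ZZ[j]$ are determined up to isomorphism by their discriminant) cannot supply it. What must actually be shown is that $\ZZ[j,\psi]$ (resp. $\ZZ[j,\phi]$) is saturated in $\End(A)$, i.e. that no element of $\QQ\la j,\phi\ra$ outside it acts integrally on $H_{1}(A,\ZZ)$. The paper does this explicitly: the $4\times4$ minors of the integrality system generate the ideal $2\la n_{1},\dots,n_{4}\ra^{2}=2\ZZ$, so the only possible extra denominator is $2$, and the unique candidate extra class $\tfrac12(1-\phi-\psi)$ equals $j\psi$; in case ii) the potential index-$3$ enlargement is excluded because $\gcd(n_{1},n_{2},n_{3},3)=1$ by Proposition \ref{PROP:ClasseOfLA}. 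Only after this saturation step does the trace-form computation legitimately give $D_{\End(A)}=\tfrac12L_{X}^{2}$. As written, your argument establishes (in the algebraic case with $L_{X}^{2}\neq0$) only the containment of a quaternion algebra isomorphic to $\frac{(-3,\frac12L_{X}^{2})}{\QQ}$ in $\End_{\QQ}(A)$, not the integral statements that constitute the theorem.
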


\begin{example}
Suppose that $L_{A}^{2}=L_{X}^{2}=0$ (for example take $L_{A}=\g_{2}$).
Since $L_{A}^{2}=0,L_{A}\d_{1}=L_{A}\d_{2}=0$, the map $\NS X)\to\text{Num}(X)$
(where $\text{Num}(X)$ is the group of numerical equivalence classes)
has a one dimensional kernel generated by $L_{A}$. Since $\text{Num}(X)$
is negative definite, the torus $A$ is not algebraic. The class $L_{A}$
is the class of an elliptic curve $E$, and there is an extension
\[
0\to E\to A\to E'\to0,
\]
where $E'$ is also an elliptic curve; this is a so-called Shafarevich
extension, see \cite[Proposition 7.1]{BirLanCT}. The endomorphism
$\phi$ is non-zero and such that $\phi^{2}=0$ and $r\phi=-\phi r$.
\end{example}

\begin{rem}
In his thesis \cite{BonThe} Bonfanti, and in \cite{BonGee}, Bonfanti
and van Geemen obtain the construction of some $(1,d)$-polarized
abelian surfaces with an order $3$ symplectic action $J_{A}$. These
surfaces $(A,J_{A})$ have a deformation to $E_{j}\times E_{j}$,
where $E_{j}=\CC/\ZZ[j]$ for $j^{2}+j+1=0$, and $J_{E_{j}\times E_{j}}=\text{Diag}(j,j^{2})$.
They also prove that the abelian surfaces they study have endomorphism
ring $\End(A)$ generated by $J_{A},\psi$ where $\psi^{2}=[d]$,
see \cite[Theorem 1.2]{BonGee}. The generalized Kummer surface $X$
associated to such $A$ is such that $L_{X}^{2}=0\mod6$. In fact
one can check that the $(1,d)$-polarization they obtain has class
\[
L_{A}=d\g_{1}+\g_{2},
\]
with $L_{A}^{2}=2d=\frac{1}{3}L_{X}^{2}$, and these surfaces form
one of the two families defined in Remark \ref{rem:As-observed-byBarth}.
\end{rem}

\begin{proof}
(Of Theorem \ref{Thm:The-endomorphism-ring}). We proceed as in
the proof of \cite[Theorem 1.2]{BonGee}: the elements $D$ in the
Néron-Severi group of $A$ are identified with alternate forms on
$H_{1}(A,\ZZ)$ (and concretely with alternate matrices in the dual
basis of basis $\a_{1},\b_{1},\a_{2},\b_{2}$).

We know from Theorem \ref{thm:i)-NS(A)} the generators $\d_{1},\d_{2}\in\NS A)$
(which are independent of $n_{1},n_{2},n_{3},n_{4}$) and $D_{A}$
or $L_{A}$ according if $L_{X}^{2}=2$ or $0\mod6$. By \cite[Proposition 5.2.1a]{BirLan},
if $D\in\NS A)$ is such that $D^{2}>0$, then (as an alternate matrix)
it is invertible and for any element $D'\in\NS A)$, the matrix $D^{-1}D'$
is the rational representation of an element of $\End_{\QQ}(A)$ in
basis $\a_{1},\b_{1},\a_{2},\b_{2}$. It is an element of $\End(A)$
if and only if the coefficients are integers. We remark that if $D'\in\NS A)$
is also invertible (as a matrix), then $D'^{-1}D$ is also the rational
representation of an endomorphism. We remark moreover that $\d_{1},\d_{2}$
have determinant $1$ and $\d_{1}^{-1}\d_{2}=j$ (the transpose of
the action of $J_{A}$ on $H_{1}(A,\ZZ)$, since we are in the dual
basis), we define $r=1+2j$, $\phi=r^{-1}\d_{1}^{-1}L_{A}$, and $\psi=\frac{1}{3}\d_{1}^{-1}(L_{A}-\d_{1}-2\d_{2})=\frac{r}{3}(\phi-1)$.

Let us study the endomorphism ring of abelian surfaces $A$ with
polarization
\[
L_{A}=3n_{1}\g_{1}+3n_{2}\g_{2}+3n_{3}\g_{3}+n_{4}(\g_{3}+\g_{4}),
\]
so that $L_{X}^{2}=6(n_{1}n_{2}+n_{3}^{2}+n_{3}n_{4})+2n_{4}^{2}=\frac{1}{3}L_{A}^{2}=2\mod6$.
We already know $j$ and one has 
\[
\phi=\left(\begin{array}{cccc}
2n_{3}+n_{4} & n_{3}+n_{4} & 2n_{2} & n_{2}\\
-n_{3} & -2n_{3}-n_{4} & -n_{2} & -2n_{2}\\
2n_{1} & n_{1} & -2n_{3}-n_{4} & -n_{3}\\
-n_{1} & -2n_{1} & n_{3}+n_{4} & 2n_{3}+n_{4}
\end{array}\right),
\]
\[
\begin{array}{c}
\psi=\frac{1}{3}\left(\begin{array}{cccc}
n_{4}-1 & -3n_{3}-n_{4}-2 & 0 & -3n_{2}\\
-3n_{3}-2n_{4}+2 & -n_{4}+1 & -3n_{2} & 0\\
0 & -3n_{1} & n_{4}-1 & 3n_{3}+2n_{4}-2\\
-3n_{1} & 0 & 3n_{3}+n_{4}+2 & -n_{4}+1
\end{array}\right),\end{array}
\]
both have trace $0$. These formulas are for the elements $\d_{1},\d_{2}$
specified in the proof of Theorem \ref{thm:i)-NS(A)} when $n_{4}=1\mod3$
(the coefficients of $\psi$ are integers). Similar formulas exist
for $n_{4}=2\mod3$. It is easy to check that in both cases $r\phi=-\phi r$
and $\phi^{2}=\frac{1}{2}L_{X}^{2}$, therefore the ring $\QQ[j,\phi]$
is the quaternion algebra $\frac{(-3,\frac{1}{2}L_{X}^{2})}{\QQ}$.
\\
In order to understand the generators of the endomorphism ring $\End(A)$,
let us take $4$ variables $a_{1},\dots,a_{4}\in\QQ$ and search if
the matrix 
\[
a_{1}+a_{1}j+a_{2}\phi+a_{4}\psi
\]
has integral coefficients. That gives $16$ equations for the $4$
unknowns $a_{1},\dots,a_{4}$. The ideal generated by all the $4$
by $4$ minors of that system is $2\la n_{1},\dots,n_{4}\ra^{2}$,
which is equal to $2\ZZ$ since by hypothesis the polarization is
primitive, and therefore the $n_{j}$ are coprime. Thus if $p$ is
a non-trivial denominator of one of the $a_{k}$, then $p=2$. By
reducing the equation modulo $2$, one finds exactly one more class
(modulo the lattice generated by $1,j,\phi,\psi$), which is $\psi'=\frac{1}{2}(1-\phi-\psi)$,
but since one can check that $\psi'=j\psi$, we obtain that $j,\psi$
generate the ring $\End(A)$ and the elements $\beta_{1}=1,\beta_{2}=j,\beta_{3}=\psi,\beta_{4}=j\psi$
form a basis of the free $\ZZ$-module $\End(A)$. The reduced trace
of an element $\beta\in\End(A)$ is $\tfrac{1}{2}$ of the trace of
$\beta$ seen as an element of $M_{4}(\QQ)$ (as an element of a quaternion
algebra, it is equal to $\beta+\bar{\beta}$). One can then compute
the reduced discriminant $D_{\End(A)}=\det(\Tr_{red}((\g_{s}\g_{t})_{1\leq s,t\leq4}))$
of $\End(A)$ and one obtains that $D_{\End(A)}=\frac{1}{2}L_{X}^{2}.$

Suppose that $L_{X}^{2}=0\mod6$. Then 
\[
L_{A}=n_{1}\g_{1}+n_{2}\g_{2}+n_{3}\g_{3}+n_{4}(\g_{3}+\g_{4}),
\]
with $L_{X}^{2}=6(n_{1}n_{2}+n_{3}^{2}+3n_{3}n_{4}+3n_{4}^{2})=3L_{A}^{2}.$
The endomorphism $\phi=\d_{1}^{-1}L_{A}$ is
\[
\phi=\left(\begin{array}{cccc}
n_{4} & -n_{3}-n_{4} & 0 & -n_{2}\\
-n_{3}-2n_{4} & -n_{4} & -n_{2} & 0\\
0 & -n_{1} & n_{4} & n_{3}+2n_{4}\\
-n_{1} & 0 & n_{3}+n_{4} & -n_{4}
\end{array}\right),
\]
which has square $\phi^{2}=\frac{1}{6}L_{X}^{2}$. Moreover $r\phi=-\phi r$.
Using the fact that $n_{1},\dots,n_{4}$ are coprime, one computes
that either $\End(A)$ is the ring generated by $j,\phi$, (with reduced
discriminant $D_{\End(A)}=\frac{1}{2}L_{X}^{2})$, or one can find
another class if $n_{1}=n_{2}=n_{3}=0\mod3$ and $n_{4}\neq0\mod3$.
But by Proposition \ref{PROP:ClasseOfLA} the gcd of $n_{1},n_{2},n_{3}$
must be coprime to $3$. 
\end{proof}
A direct consequence of Theorem \ref{Thm:The-endomorphism-ring} is
\begin{cor}
\label{cor:Two-complex-toriUnik-endo}Two complex tori $A,B$ with
order $3$ automorphism such that $L_{X}^{2}=L_{Y}^{2}$, where $X=\Km_{3}(A),\,Y=\Km_{3}(B)$
have isomorphic endomorphism ring.
\end{cor}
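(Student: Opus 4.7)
The plan is to derive the corollary directly from Theorem \ref{Thm:The-endomorphism-ring}. Set $\ell := L_{X}^{2} = L_{Y}^{2}$. By Theorem \ref{thm:i)-NS(A)} (or Remark \ref{rem:As-observed-byBarth}), $\ell$ is congruent to $0$ or $2$ modulo $6$, and this congruence class is the same for $A$ and $B$, so the same case of Theorem \ref{Thm:The-endomorphism-ring} applies to both tori simultaneously.

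If $\ell \equiv 2 \bmod 6$, Theorem \ref{Thm:The-endomorphism-ring}(i) presents $\End(A)$ and $\End(B)$ both as $\ZZ[j,\psi]$, with $j^{2}+j+1=0$ and $\psi^{2}=\tfrac{1}{6}(\ell-2)$, and with the non-commutativity encoded through $r=1+2j$, $r^{2}=-3$, $\phi=1+\tfrac{3}{r}\psi$, $\phi^{2}=\tfrac{\ell}{2}$ and $r\phi=-\phi r$. Every one of these relations depends only on the integer $\ell$, so the two rings are isomorphic as abstract $\ZZ$-algebras. If $\ell \equiv 0 \bmod 6$, the parallel case (ii) of the theorem gives $\End(A) \simeq \End(B) \simeq \ZZ[j,\phi]$ subject to $j^{2}+j+1=0$, $\phi^{2}=\ell/6$ and $r\phi=-\phi r$; once again the presentation depends only on $\ell$, yielding the desired isomorphism.

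The only fine point to address is the qualifier ``for general $A$'' attached to the statement $\End(A)=\ZZ[j,\psi]$ (resp.\ $\ZZ[j,\phi]$) in Theorem \ref{Thm:The-endomorphism-ring}. This does not affect the corollary: the same theorem exhibits \emph{unconditionally} the elements $j,\psi$ (resp.\ $j,\phi$) inside $\End(A)$, and computes the reduced discriminant of the quaternion order they generate to be $\tfrac{1}{2}L_{X}^{2}$, which equals the reduced discriminant of $\End(A)$ itself. Hence the subring generated by these elements coincides with $\End(A)$ for every complex $2$-torus of Picard number $3$, not only for generic members of the family. No substantial obstacle arises; the statement is a structural readout of Theorem \ref{Thm:The-endomorphism-ring}.
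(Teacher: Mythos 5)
Your proof is correct and is essentially the paper's own argument: the paper states Corollary \ref{cor:Two-complex-toriUnik-endo} as a direct consequence of Theorem \ref{Thm:The-endomorphism-ring}, precisely because the presentation of $\End(A)$ given there (generators $j,\psi$ or $j,\phi$ with relations involving only $L_{X}^{2}$) depends only on the integer $\ell=L_{X}^{2}$. Your treatment of the ``for general $A$'' qualifier also matches the intended reading: the hypothesis that $L_{X}^{2}$ is defined already forces Picard number $3$, and then the containment $\ZZ[j,\psi]\subseteq\End(A)$ together with the equality of reduced discriminants from Theorem \ref{Thm:The-endomorphism-ring} identifies the two orders, so no further argument is needed.
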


\section{\label{sec:Arithmetic-and-Abelian}Arithmetic and Abelian surfaces
with quaternionic multiplication}

\subsection{\label{subsec:Abelian-Surf-order3}Complex torus with an order $3$
symplectic automorphism}

Let $J_{A}$ be an order $3$ symplectic automorphism of $A$. One
has: 
\[
J_{A}^{2}+J_{A}+I_{A}=0
\]
in $\End(A)$, where $I_{A}$ is the identity of $A$. The Néron-Severi
group of $A$ has rank $\rho_{A}=2,3$ or $4$. From Theorem \ref{Thm:The-endomorphism-ring},
we obtain:
\begin{prop}
\label{prop:AisQM-by-order3}Suppose that $\rho_{A}=3$ and $L_{A}^{2}\neq0$.
The surface $A$ has quaternionic multiplication: there exists a quaternion
algebra $H$ over $\QQ$ and an order $\OO$ in $H$ such that $\OO\simeq\End(A)$.
Since $J_{A}^{2}+J_{A}+I_{A}=0$, the ring of Eisenstein integers
is contained in $\OO$. The quaternion algebra $H$ is indefinite
(i.e. $H\otimes_{\QQ}\RR\simeq M_{2}(\RR)$) if and only if $L_{A}^{2}>0$.
\end{prop}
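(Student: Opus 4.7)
The plan is to deduce everything from Theorem \ref{Thm:The-endomorphism-ring}, which has already done the heavy lifting by exhibiting explicit quaternionic generators and identifying the ambient algebra. The proposition is essentially a repackaging plus one short local-algebra observation at the real place, so I do not expect any serious obstacle.

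First I would note that by the Picard number hypothesis, Theorem \ref{Thm:The-endomorphism-ring} applies, and the assumption $L_A^2\neq 0$ is equivalent to $L_X^2\neq 0$ because $L_X^2=\tfrac{1}{3}L_A^2$ or $L_X^2=3L_A^2$ depending on the congruence class of $L_X^2$ modulo $6$. In either case Theorem \ref{Thm:The-endomorphism-ring} provides endomorphisms $j=J_A,\,r=1+2j,\,\phi$ of $A$ satisfying $r^2=-3$, $r\phi=-\phi r$ and $\phi^2=\tfrac{1}{2}L_X^2$ (case i)) or $\phi^2=\tfrac{1}{6}L_X^2$ (case ii)), with $\phi^2\neq 0$, so that $\End_{\QQ}(A)$ contains — hence equals, by dimension count — the quaternion algebra
\[
H=\tfrac{(-3,\,d)}{\QQ},\qquad d=\phi^2\in\{\tfrac{1}{2}L_X^2,\,\tfrac{1}{6}L_X^2\}.
\]
Therefore $\End(A)$ is an order $\OO$ in $H$, proving the first assertion. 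The containment of the Eisenstein integers is immediate: the relation $J_A^2+J_A+I_A=0$ means that the subring $\ZZ[J_A]\subset\End(A)=\OO$ is isomorphic to $\ZZ[j]$ with $j^2+j+1=0$.

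For the last assertion, I would use the standard criterion that a rational quaternion algebra $\tfrac{(a,b)}{\QQ}$ is split at the archimedean place, i.e.\ satisfies $H\otimes_\QQ\RR\simeq M_2(\RR)$, if and only if at least one of $a,b$ is positive (otherwise it becomes Hamilton's quaternions over $\RR$). Since here $a=-3<0$, indefiniteness is equivalent to $d>0$, which in both cases i) and ii) is equivalent to $L_X^2>0$, and finally to $L_A^2>0$ because $L_X^2$ and $L_A^2$ have the same sign by the relation between them recalled above. This gives the stated equivalence and completes the proof.
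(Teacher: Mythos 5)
Your argument is correct and is essentially the paper's own: the proposition is presented there as a direct consequence of Theorem \ref{Thm:The-endomorphism-ring} (``From the previous Section, we obtain''), which is exactly how you proceed, and the standard real-place criterion for $\frac{(a,b)}{\QQ}$ together with the sign relation between $L_{A}^{2}$ and $L_{X}^{2}$ settles the indefiniteness clause just as the paper intends. The only compressed step is ``hence equals, by dimension count'': to see that $\End_{\QQ}(A)$ is not strictly larger than the quaternion algebra you should either invoke the equality $\End_{\QQ}(A)=\frac{(-3,d)}{\QQ}$ asserted (for general $A$) in Theorem \ref{Thm:The-endomorphism-ring}, or --- to cover every $A$ with $\rho_{A}=3$ --- observe that a $2$-torus whose endomorphism algebra properly contains a quaternion algebra has $\End_{\QQ}(A)\simeq M_{2}(K)$ with $K$ imaginary quadratic, forcing $\rho_{A}=4$ and contradicting the hypothesis; the paper leaves this point at the same level of detail.
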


 Let us therefore study quaternion algebras and their orders $\OO$
which contain the ring $\ZZ[j]$, with $j^{2}+j+1=0$. As we will
see, the Shimura construction gives a converse: if $\OO$ is an order
in a quaternion algebra such that $\ZZ[j]\hookrightarrow\OO$, then
there exists a one dimensional family of abelian surfaces $A$ with
$\OO\simeq\End(A)$ and Picard number $3$ (generically). Let us start
by recalling the definitions for quaternion algebras and by fixing
the notations. 


\subsection{\label{subsec:Quaternion-algebras-and}Quaternion algebras and their
discriminant}

More details on the definitions related to quaternion algebras are
given in the long version v1 of this paper on arXiv. 

\subsubsection{\label{subsec:First-definitions-and}First definitions and notations}

For $a,b\in\QQ$, we denote by $H=\frac{(a,b)}{\QQ}$ the \textit{quaternion
algebra} generated by $\a,\b$ with $\a^{2}=a$, $\b^{2}=b$ and $\a\b=-\b\a.$
 The \textit{reduced discriminant} $D_{H}$ of $H$ is the product
of primes $p$ at which there is a ramification i.e. such that $H_{p}=H\otimes_{\QQ}\QQ_{p}$
is division algebra (by convention we count $-1$ as a prime and $\QQ_{-1}=\RR)$.
There are always an even number of such primes, and two quaternion
algebras are isomorphic if and only if they are ramified at the same
places, which is equivalent to ask that their reduced discriminants
are equal. The quaternion algebra $H$ is a matrix algebra ($H\simeq M_{2}(\QQ)$)
if and only if $D_{H}=1$. \\
If the field $\QQ(\sqrt{d})$ \textit{splits} $H$ (i.e.\,$H\otimes\QQ(\sqrt{d})$
is isomorphic to the matrix algebra $M_{2}(\QQ(\sqrt{d}))$ ), then
there exists $c\in\QQ^{*}$ such that $H=\frac{(c,d)}{\QQ}$ and an
embedding $\Phi:H\hookrightarrow M_{2}(\QQ(\sqrt{d})$ is given by
\begin{equation}
\Phi(x+y\a+z\b+t\a\b)=\left(\begin{array}{cc}
x+y\sqrt{d} & z+t\sqrt{d}\\
c(z-t\sqrt{d}) & x-y\sqrt{d}
\end{array}\right).\label{eq:Phi-representation-Quat}
\end{equation}
The reduced norm and trace of $w\in H$ are then equal to the determinant
and trace of $\Phi(w)$. 

\subsubsection{Eichler orders, Ideals and class numbers}

An \textit{Eichler order} is an order which is the intersection of
two maximal orders. If $D_{H}$ is the discriminant of $H$, the \textit{level}
of $\OO$ is the integer $N$ such that $D_{\OO}=ND_{H}$, where $D_{\OO}$
is the reduced discriminant of $\OO$. By \cite[Proposition 1.54]{AlsinaBayer},
the level of an Eichler order is coprime to $D_{H}$, moreover:
\begin{prop}
(\cite[Corollary 1.58]{AlsinaBayer}). Let $N$ be an integer coprime
to $D_{H}$. Up to conjugation, there exists an unique Eichler order
of level $N$.\\
(\cite[Proposition 1.54]{AlsinaBayer}). Let $\OO$ be an order. Suppose
$D_{\OO}=ND_{H}$ with $N$ a square-free and prime to $D_{H}$ integer.
Then $\OO$ is an Eichler order. 
\end{prop}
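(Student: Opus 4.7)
The plan is to proceed by local-global analysis, reducing both assertions to classification statements at each prime $p$ via the local completions $H_{p}=H\otimes_{\QQ}\QQ_{p}$ and $\OO_{p}=\OO\otimes_{\ZZ}\ZZ_{p}$. The basic input is that an order in $H$ is recovered as the intersection of its local completions in the adelic picture, so classifying orders up to conjugation reduces to classifying them locally and then assembling, with the type-class obstruction collapsing by strong approximation (valid here because $H$ is indefinite, which is the ambient setting for this paper and Alsina--Bayer's treatment).

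For the first assertion, I would classify Eichler orders $\OO_{p}$ of level $p^{v_{p}(N)}$ at each prime $p$. At primes $p\nmid D_{H}$, one has $H_{p}\simeq M_{2}(\QQ_{p})$; the maximal orders are the vertices of the Bruhat--Tits tree, all conjugate to $M_{2}(\ZZ_{p})$ under $\mathrm{GL}_{2}(\QQ_{p})$, and the intersection of two maximal orders at tree-distance $k$ is the Eichler order locally isomorphic to $\bigl(\begin{smallmatrix}\ZZ_{p} & \ZZ_{p}\\ p^{k}\ZZ_{p} & \ZZ_{p}\end{smallmatrix}\bigr)$, unique up to local conjugation because the tree is homogeneous. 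At primes $p\mid D_{H}$, the algebra $H_{p}$ is the unique quaternion division algebra over $\QQ_{p}$ and has a unique maximal order (its valuation ring), which is forced on $\OO_{p}$ since $N$ is coprime to $D_{H}$. Strong approximation for the norm-$1$ group then promotes local conjugacy to global conjugacy, giving uniqueness of the Eichler order of level $N$.

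For the second assertion, I would show locally that $\OO_{p}$ is an Eichler order at every prime. At primes $p\nmid ND_{H}$, the local discriminant is trivial, so $\OO_{p}$ is maximal, hence trivially Eichler. At primes $p\mid D_{H}$, the local discriminant equals $p$ (since $N$ is coprime to $D_{H}$), which forces $\OO_{p}$ to be the unique maximal order of the local division algebra. At primes $p\mid N$, the local discriminant is $p$ (using that $N$ is squarefree) and $H_{p}\simeq M_{2}(\QQ_{p})$; the content here is to argue that any order in $M_{2}(\QQ_{p})$ of discriminant $p$ is Eichler of level $p$. Concretely one chooses a maximal order $\mathcal{M}$ containing $\OO_{p}$; the discriminant relation gives $[\mathcal{M}:\OO_{p}]=p$, and one identifies a second maximal order $\mathcal{M}'$ adjacent to $\mathcal{M}$ in the Bruhat--Tits tree (for instance, via the dual-lattice construction or the involution) such that $\OO_{p}=\mathcal{M}\cap\mathcal{M}'$. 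A global pair of maximal orders realising these local intersections is then produced by strong approximation, and one concludes $\OO=\mathcal{M}_{1}\cap\mathcal{M}_{2}$ globally.

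The hard part will be the local classification at primes dividing the level, namely the assertion that a local order in $M_{2}(\QQ_{p})$ of squarefree discriminant $p$ is necessarily Eichler. Without the squarefree hypothesis one can encounter non-Eichler (Bass or non-Bass) orders, so the key leverage is $v_{p}(D_{\OO})=1$, which forces $\OO_{p}$ to sit as the stabilizer of an edge in the tree rather than of a more exotic subcomplex. Once this is in place, the global assembly into an Eichler order is formal, via the local-global dictionary for orders and strong approximation.
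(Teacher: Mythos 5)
The paper does not prove this Proposition at all: it is quoted verbatim from Alsina--Bayer (Corollary 1.58 and Proposition 1.54), so the only meaningful comparison is with the standard proof in the cited literature, and your local--global argument is essentially that proof. Your structure is correct: classify completions $\OO_{p}$ at each prime (maximal at $p\mid D_{H}$ since the local division algebra has a unique maximal order; Eichler of level $p^{v_{p}(N)}$ at split primes, unique up to local conjugacy via the Bruhat--Tits tree), and then pass from local to global. Two refinements are worth recording. First, strong approximation (Eichler's theorem, which is exactly where indefiniteness of $H$ enters, consistent with Alsina--Bayer's standing hypothesis) is needed only for the first assertion, i.e. to promote local conjugacy to global conjugacy; for the second assertion it is not needed: once you know each $\OO_{p}$ is an intersection of two local maximal orders, the two global maximal orders with these prescribed completions (and equal to $\OO_{p}$ at the remaining primes) exist by the elementary local--global correspondence for lattices, and their intersection agrees with $\OO$ at every place, hence equals $\OO$.

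Second, the local crux you correctly flag --- an order of reduced discriminant $p$ in $M_{2}(\QQ_{p})$ is Eichler of level $p$ --- is only gestured at, but it closes cleanly. Choose a maximal order $M\simeq M_{2}(\ZZ_{p})$ containing $\OO_{p}$; since reduced discriminants satisfy $D_{\OO_{p}}=[M:\OO_{p}]\,D_{M}$, the index is $p$, so $pM\subset\OO_{p}$ and the image of $\OO_{p}$ in $M_{2}(\FF_{p})$ is a $3$-dimensional unital subalgebra. The only such subalgebras are the Borel subalgebras, so up to conjugation $\OO_{p}$ is the order of matrices in $M_{2}(\ZZ_{p})$ that are upper triangular mod $p$, which is visibly the intersection of the two maximal orders attached to the endpoints of an edge of the tree. (Equivalently: squarefree reduced discriminant means $\OO$ is hereditary, and hereditary orders in the split local algebra are Eichler.) With these two points made explicit your argument is a complete and correct proof of the quoted Proposition, along the same lines as the reference.
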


Two ideals $I,J$ are said \textit{equivalent on the right} if $I=Jh$
for some $h\in H^{*}$. Let $\OO$ be an order. The classes of equivalent
(on the right) $\OO$-ideals are called the right classes of $\OO$.
There is the same notion for the left. The \textit{class number} $h(\OO)$
of $\OO$-ideals is the number of classes of left $\OO$-ideals; that
number is also equal to the number of classes of right $\OO$-ideals.

\subsubsection{\label{subsec:Legendre,-Kronecker-and}Legendre, Kronecker and Hilbert
symbol}

For the notions of \textit{Legendre and Kronecker symbol}s $\left(\frac{\cdot}{\cdot}\right)$,
and the quadratic reciprocity law, we refer to \cite[Section 1.1.2]{AlsinaBayer}
or \cite{Serre}. For a prime $p$, the \textit{Hilbert symbol} is
the function $(-,-)_{p}:\QQ_{p}^{*}\times\QQ_{p}^{*}\to\{-1,1\}$
defined by 
\[
(a,b)_{p}=\left\{ \begin{array}{ll}
1 & \text{ if }z^{2}=ax^{2}+by^{2}\text{ has a non-trivial solution}\\
-1 & \text{otherwise}.
\end{array}\right.
\]
For the computation of the Hilbert symbol, see \cite{Serre}, or the
version 1 of that paper on arXiv. One has:
\begin{prop}
The quaternion algebra $\frac{(a,b)}{\QQ_{p}}$ is isomorphic to $M_{2}(\QQ_{p})$
if and only if $(a,b)_{p}=1$. 
\end{prop}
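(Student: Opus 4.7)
The plan is to establish the equivalence by reducing the question of whether $H=\frac{(a,b)}{\QQ_p}$ splits to the isotropy of an associated quadratic form, and then identify that isotropy with the condition $(a,b)_p=1$. The argument proceeds in three stages.

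First, I would use the general fact that a $4$-dimensional central simple algebra over a field is either a division algebra or isomorphic to the matrix algebra $M_2(\QQ_p)$ (Wedderburn). Since an element $w\in H$ is invertible precisely when $\Nr(w)\neq 0$, $H$ fails to be a division algebra if and only if there exists a nonzero $w\in H$ with $\Nr(w)=0$. Thus $H\simeq M_2(\QQ_p)$ if and only if the reduced norm form $N(x,y,z,t)=x^2-ay^2-bz^2+abt^2$ is isotropic over $\QQ_p$.

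Second, I would show that isotropy of the quaternary form $N=\langle 1,-a,-b,ab\rangle$ is equivalent to isotropy of the ternary subform $\langle 1,-a,-b\rangle$, i.e.\ of $z^2-ax^2-by^2$. The direction ``ternary isotropic $\Rightarrow$ quaternary isotropic'' is immediate by setting $t=0$. For the converse, writing $N=(x^2-ay^2)-b(z^2-at^2)$, a nontrivial zero either forces $z^2-at^2=0=x^2-ay^2$ (in which case the ternary form is already isotropic via $(x,y,0)$ or $(0,0,\sqrt{a}/\sqrt{1})$ after noting $a$ is then a square) or presents $b$ as a nonzero norm from $\QQ_p(\sqrt a)$; combining such a representation with any element of the form $z_1^2-at_1^2=1$ produces a nontrivial zero of the ternary form. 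Equivalently, one can invoke the roundness of the Pfister form $\langle\langle -a,-b\rangle\rangle=N$: a Pfister form is isotropic iff it is hyperbolic iff its pure subform is isotropic.

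Third, by the very definition recalled just above the proposition, $(a,b)_p=1$ means that $z^2=ax^2+by^2$ has a nontrivial solution over $\QQ_p$, which is exactly the isotropy of $\langle 1,-a,-b\rangle$. Combining the three equivalences completes the proof. The main obstacle is the second stage: unwinding the equivalence between isotropy of $N$ and of the ternary subform without appealing to the full machinery of Pfister forms requires a small case analysis handling the degenerate situation where $z^2-at^2$ vanishes.
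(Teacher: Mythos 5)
Your argument is correct. Note that the paper itself gives no proof of this proposition: it is recalled as a classical fact with a pointer to Serre's \emph{Cours d'arithm\'etique} and to Alsina--Bayer, and your three-step reduction (Wedderburn plus invertibility $\Leftrightarrow\Nr\neq0$, hence splitting $\Leftrightarrow$ isotropy of the norm form $\la1,-a,-b,ab\ra$, then equivalence with isotropy of the ternary part $\la1,-a,-b\ra$ via multiplicativity of the norm of $\QQ_{p}(\sqrt{a})$, which is exactly the condition $(a,b)_{p}=1$ as defined just above the proposition) is precisely the standard proof found in those references. The only blemish is cosmetic: in the degenerate case the displayed vector ``$(0,0,\sqrt{a}/\sqrt{1})$'' is not a zero of the ternary form; what you mean is that $a$ is then a square, so $(\sqrt{a},1,0)$ gives the required isotropic vector.
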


Two quaternion algebras $\frac{(a,b)}{\QQ}$, $\frac{(c,d)}{\QQ}$
are isomorphic if and only if they have the same Hilbert symbols:
$(a,b)_{p}=(c,d)_{p}$ for all primes $p$, including $-1$. 

\subsubsection{The discriminant of the quaternion algebras $\frac{(-3,d)}{\protect\QQ}$.}

We recall that, following Conway and Sloane, we take the convention
that $-1$ is a prime. Let us study the quaternion algebras $H$ over
$\QQ$ which contain an element $r$ with $r^{2}=-3$, as in Proposition
\ref{prop:AisQM-by-order3}. Let $d\in\QQ$ such that $H=\frac{(-3,d)}{\QQ}$.
Since for any integer $n\neq0$, we have $\frac{(-3,d)}{\QQ}=\frac{(-3,n^{2}d)}{\QQ}$,
we can suppose that $d=p_{1}\cdot\dots\cdot p_{m}$ is a square free
integer. Let us denote by $r,\phi_{o}$ the generators of $H$ with
$r^{2}=-3,\phi_{o}^{2}=d$ and $r\phi_{o}=-\phi_{o}r$, so that $1,r,\phi_{o},r\phi_{o}$
is a $\QQ$-basis of $H$. If $3$ is among the primes $p_{1},\dots,p_{m}$,
then $(\frac{1}{3}r\phi_{o})^{2}=\frac{d}{3}\in\ZZ$, and $r(\frac{1}{3}r\phi_{o})=-\phi_{o}$,
thus $\frac{(-3,d)}{\QQ}$ is isomorphic to $\frac{(-3,\frac{d}{3})}{\QQ}$,
and we can suppose that $3\nmid d$. Let us define $j=\frac{-1+r}{2}$,
so that $j^{2}+j+1=0$, $r=1+2j\in\ZZ[j]$, and $j\phi_{o}=\phi_{o}j^{2}=\phi_{o}\bar{j}$
(since we know that $\bar{r}\phi_{o}=-r\phi_{o}=\phi_{o}r$). Let
$p$ be a prime dividing $d$ and such that $p=1\mod3$. Then $p$
is a square modulo $3$ and there exists $u\in\ZZ[j]$ with $u\bar{u}=p$.
By writing $u=a+bj$ with $a,b\in\ZZ$, since $\phi_{o}j=\bar{j}\phi_{o}$,
we obtain the relation $\phi_{o}u=\bar{u}\phi_{o}$, and 
\[
(\frac{u}{p}\phi_{o})^{2}=\frac{u\bar{u}}{p^{2}}\phi_{o}^{2}=\frac{p}{p^{2}}d=\frac{d}{p}\in\ZZ,\,\,\,\,r(\frac{u}{p}\phi_{o})=-(\frac{u}{p}\phi_{o})r,
\]
thus the quaternion algebra $H$ is isomorphic to $\frac{(-3,\frac{d}{p})}{\QQ}$.
We can therefore suppose that all prime divisors of $d$ are congruent
to $2$ mod $3$ and $d$ is square free. Let us define $d_{H}=d$:
this is a square-free integer such that the primes dividing it are
congruent to $2$ mod $3$. 
\begin{lem}
Under the above hypotheses on $d_{H}$, the discriminant of $H=\frac{(-3,d_{H})}{\QQ}$
is 
\[
D_{H}=\left\{ \begin{array}{ll}
d_{H} & \text{ if }m\text{ is even }\\
3d_{H} & \text{ if }m\text{ is odd }
\end{array}\right.
\]
where $d_{H}=p_{1}\cdot\dots\cdot p_{m}$ is the decomposition of
$d_{H}$ as a product of primes (including eventually $-1$) $p_{k}$
congruent to $2\mod3$. 
\end{lem}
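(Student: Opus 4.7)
The plan is to compute the Hilbert symbol $(-3,d_H)_p$ at every place $p$ (including $p=-1$) using the explicit formulas recalled in Section~\ref{subsec:Legendre,-Kronecker-and}, and then take $D_H$ to be the product (with the Conway--Sloane sign convention) of the places where this symbol is $-1$. Write $d_H = (-1)^{s}\cdot 2^{s_2}\cdot p_1'\cdots p_{m''}'$ with $s,s_2\in\{0,1\}$ and the $p_j'$ odd primes $\geq 5$ congruent to $2\bmod 3$, so that $m=s+s_2+m''$. At any odd prime $p\nmid 6d_H$, both $-3$ and $d_H$ are $p$-adic units and the formula gives $(-3,d_H)_p=1$ immediately, so ramification can only occur at $-1,2,3$ and at the prime divisors of $d_H$.

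For $p=-1$, the symbol is $1$ iff $z^2+3x^2=d_H y^2$ has a non-trivial real solution, i.e.\ iff $d_H>0$, so ramification at $-1$ happens iff $s=1$. For $p=2$ one writes $-3=2^0\cdot(-3)$ and $d_H=2^{s_2}v$ with $v$ odd, and uses $\epsilon(-3)=(-3-1)/2\equiv 0\bmod 2$ together with $\omega(-3)=(9-1)/8=1$ to get $(-3,d_H)_2=(-1)^{s_2}$; so $H$ is ramified at $2$ iff $s_2=1$. For $p=3$, since $-3=3\cdot(-1)$ with $\alpha=1$ and $3\nmid d_H$ gives $\beta=0$, the formula reduces to $(-3,d_H)_3=\bigl(\tfrac{d_H}{3}\bigr)$; each prime factor of $d_H$ (including the possible $-1$) is congruent to $2\bmod 3$, hence $d_H\equiv(-1)^m\bmod 3$, so ramification at $3$ occurs iff $m$ is odd.

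For a prime $p\geq 5$ dividing $d_H$, quadratic reciprocity gives $\bigl(\tfrac{-3}{p}\bigr)=1$ iff $p\equiv 1\bmod 3$; since $p\equiv 2\bmod 3$ by hypothesis, $\bigl(\tfrac{-3}{p}\bigr)=-1$, and the formula yields $(-3,d_H)_p=-1$. Thus the full list of ramified places is $\{-1\text{ if }s=1\}\cup\{2\text{ if }s_2=1\}\cup\{p_1',\dots,p_{m''}'\}\cup\{3\text{ if }m\text{ odd}\}$, whose product is $d_H$ when $m$ is even and $3d_H$ when $m$ is odd. The corresponding cardinalities are $m$ and $m+1$, both even, which is consistent with Hilbert reciprocity and serves as a sanity check.

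The routine parts are the applications of the reciprocity and Hilbert-symbol formulas; the only delicate points are the correct treatment of $p=2$ (where the $\epsilon(-3)\equiv 0\bmod 2$ makes the mixed term vanish) and keeping track of the sign contribution from the real place via the $-1$-as-prime convention. I do not anticipate a serious obstacle beyond clean bookkeeping of these sign/parity contributions.
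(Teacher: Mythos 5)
Your proof is correct and follows essentially the same route as the paper's: both identify the ramified places of $H$ by computing the local Hilbert symbols $(-3,d_{H})_{p}$ at the real place, at the primes dividing $d_{H}$, and at $3$, concluding that $H$ ramifies exactly at the prime divisors of $d_{H}$ (including $-1$ when $d_{H}<0$) and at $3$ precisely when $m$ is odd. Your extra bookkeeping (explicitly ruling out other places and the parity check via Hilbert reciprocity) only adds detail to the same argument.
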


\begin{proof}
The possible ramification is over primes $3,$ and $p_{1},\dots,p_{m}.$
Let $p\in\{p_{1},\dots,p_{m}\}$ be a natural prime. By Section \ref{subsec:Legendre,-Kronecker-and},
one has $(-3,d_{H})_{p}=(-3,p)_{p}$. Suppose that $p$ is odd; since
$p=2\mod3$, we have
\[
(-3,p)_{p}=\left(\frac{-1}{p}\right)\left(\frac{3}{p}\right)=\left(\frac{p}{3}\right)=-1,
\]
thus $H$ is ramified at $p$. If $p=2$, we also obtain that $(-3,2)_{2}=-1$
and $H$ is ramified at $2$. The quaternion algebra $H$ is definite
if and only if $-1\in\{p_{1},\dots,p_{m}\}$. We have 
\[
(-3,d)_{3}=\prod_{k=1}^{m}(-3,p_{k})_{3}=\prod_{k=1}^{m}\left(\frac{p_{k}}{3}\right)=(-1)^{m},
\]
therefore $3$ is ramified in $H$ if and only if $m$ is odd. 
\end{proof}
\begin{rem}
One has $D_{H}=3d_{H}$ if and only if $d_{H}=2\mod3$, and $D_{H}=d_{H}$
if and only if $d_{H}=1\mod3$. 
\end{rem}

\subsection{\label{subsec:Orders-containing-the-eisen}Orders containing the
Eisenstein integers}

Let us recall that $r,\phi_{o}$ are the generators of $H=\frac{(-3,d_{H})}{\QQ}$
such that $r^{2}=-3,\,\phi_{o}^{2}=d_{H}$, $r\phi_{o}=-\phi_{o}r$.
We also defined $j=\frac{1}{2}(-1+r)$, which satisfies $j^{2}+j+1=0$,
$j\phi_{o}=\phi_{o}\bar{j}=\phi_{o}j^{2}$. If $D_{H}=1\mod3$, let
us define $\t=\frac{r}{3}(\phi_{o}-1)$ (then $\t^{2}=\frac{d_{H}-1}{3}$)
and if $D_{H}=0\mod3$, let us define $\t=\phi_{o}$. 
\begin{prop}
The quaternions $1,j,\t,j\t\in H$ are generators of a maximal order
$\OO_{m}$. \\
Suppose that $H$ is a definite quaternion algebra and let $j'\in\OO_{m}$
be such that $j'^{2}+j'+1=0$. Then $j'=j$ or $j^{2}$.
\end{prop}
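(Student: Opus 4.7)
The plan is to split according to the two cases $D_{H}\equiv 0\mod 3$ and $D_{H}\equiv 1\mod 3$, first verifying the ring structure and integrality of $\OO_m=\ZZ[j]\oplus \ZZ[j]\tau$, then proving maximality by computing its reduced discriminant and comparing to $|D_H|$, and finally deducing the rigidity of order-$3$ elements from positive-definiteness of the reduced norm in the definite case.

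First I would show $\OO_m$ is a subring of $H$. Closure reduces to the commutation between $j$ and $\tau$. In the case $D_H\equiv 0\mod 3$ one has $\tau=\phi_o$ and $j\phi_o=\phi_o\bar j$ is given. In the case $D_H\equiv 1\mod 3$, a short computation using $\tau=r(\phi_o-1)/3$, $jr=rj$ and $r\bar j=\bar j r$ yields
\[
j\tau-\tau\bar j \;=\; \tfrac{1}{3}(\bar j-j)\,r \;=\; -\tfrac{r^{2}}{3}\;=\;1,
\]
so $j\tau=\tau\bar j+1$, and the module is again closed under multiplication. Integrality of the basis elements (integer reduced trace and norm) is routine.

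Next, I would prove maximality by forming the Gram matrix $G=\bigl(\Tr(e_i\bar e_j)\bigr)$ in the basis $e_1,e_2,e_3,e_4=1,j,\tau,j\tau$ and showing that $d(\OO_m)=\sqrt{|\det G|}$ equals $|D_H|$. The only entries depending on the case are those involving $j\tau$. In the first case $j\tau$ is pure (since both $\tau$ and $\bar j\tau=\tau j$ are pure), so $\Tr(j\tau)=0$ and $G$ is block diagonal with blocks of determinants $3$ and $3\nu^{2}=3d_H^{2}$, giving $\det G=9d_H^{2}$ and $d(\OO_m)=3|d_H|=|D_H|$. In the second case $\overline{j\tau}=1-j\tau$, so $\Tr(j\tau)=1$; expanding the resulting non-block-diagonal $4\times 4$ determinant (for instance by cofactor expansion along the first column) yields $\det G=(3\nu+1)^{2}=d_H^{2}$, hence $d(\OO_m)=|d_H|=|D_H|$. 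In both cases $d(\OO_m)$ equals the reduced discriminant of $H$, which characterizes maximal orders.

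For the last assertion, assume $H$ is definite, so $d_H<0$ and $\nu<0$, and let $j'\in\OO_m$ satisfy $j'^{2}+j'+1=0$; then $\Tr(j')=-1$ and $\Nr(j')=1$. Write $j'=\alpha+\beta\tau$ with $\alpha,\beta\in\ZZ[j]$, and expand $j'\overline{j'}$ using $\tau^{2}=\nu$ together with the commutation rules. In the case $D_H\equiv 0\mod 3$ the cross terms cancel and one gets
\[
\Nr(j')\;=\;N(\alpha)-\nu N(\beta)\;=\;N(\alpha)+|d_H|\,N(\beta)\;=\;1,
\]
a sum of two non-negative integers, forcing $\beta=0$ and $N(\alpha)=1$. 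The unit group of $\ZZ[j]$ is $\{\pm 1,\pm j,\pm j^{2}\}$, and the condition $\Tr_{\QQ(j)/\QQ}(\alpha)=-1$ singles out $\alpha\in\{j,j^{2}\}$. I expect the main obstacle to be the second case $D_H\equiv 1\mod 3$, where $j\tau-\tau\bar j=1$ produces an extra mixed term in the expansion of $\Nr(j')$, so the simple positivity argument does not immediately yield $\beta=0$. The plan is to work directly with the positive-definite integral quadratic form $\Nr$ on $\OO_m$: combine $\Tr(j')=-1$ and $\Nr(j')=1$ to bound the contribution of the mixed term against the dominating positive part $N(\alpha)+|d_H|N(\beta)$, and conclude $\beta=0$ by a short diophantine argument, reducing once more to the unit-group analysis in $\ZZ[j]$.
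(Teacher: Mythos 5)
Your first half is fine and is essentially the paper's own argument: the relation $j\tau-\tau\bar{j}=1$ (resp.\ $j\phi_{o}=\phi_{o}\bar{j}$) gives closure, and maximality follows by computing the discriminant of the trace form and comparing it with that of $H$; your determinant values $9d_{H}^{2}$ and $(3\nu+1)^{2}=d_{H}^{2}$ are correct, and using $\Tr(e_{i}\bar{e}_{j})$ instead of the paper's $\Tr(e_{i}e_{j})$ changes nothing.

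The rigidity part is where there are genuine gaps. A small one first: in the case $\tau=\phi_{o}$ the equation $N(\alpha)+|d_{H}|N(\beta)=1$ does \emph{not} force $\beta=0$ when $|d_{H}|=1$; you must also invoke $\Tr(j')=-1$ (every $\beta\phi_{o}$ with $\beta\in\ZZ[j]$ is a pure quaternion, hence cannot satisfy $x^{2}+x+1=0$), which is exactly how the paper disposes of its subcase $d=1$. The serious gap is the case $D_{H}\equiv1\bmod3$, which you only sketch, and the sketch cannot be completed as stated. First, the coefficient of $N(\beta)$ in the norm is $\Nr(\tau)=-\tau^{2}=\tfrac{1-d_{H}}{3}=\tfrac{|d_{H}|+1}{3}$, not $|d_{H}|$. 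Second, and decisively, the conclusion $\beta=0$ that your diophantine bound is supposed to deliver is \emph{false} for $d_{H}=-2$: there $\tau^{2}=-1$, your own commutation relation gives for $j'=j+\tau$
\[
\Tr(j')=-1,\qquad \Nr(j')=\Nr(j)-(j\tau-\tau\bar{j})+\Nr(\tau)=1-1+1=1,
\]
so $j'^{2}+j'+1=0$ although $j'\neq j,j^{2}$. (This is no accident: $\OO_{m}$ then has reduced discriminant $2$, i.e.\ it is the Hurwitz order of $H\simeq\frac{(-3,-2)}{\QQ}$, whose unit group is binary tetrahedral and contains eight primitive cube roots of unity.) Hence no bounding of the mixed term can yield $\beta=0$ uniformly; any correct treatment must single out the algebra $\frac{(-3,-2)}{\QQ}$, and indeed the statement as given needs that case excluded or discussed separately. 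I note that the paper's own proof of this case is equally brief and its displayed norm form carries the same coefficient $-d_{H}$ in front of $c_{3}^{2}-c_{3}c_{4}+c_{4}^{2}$ where $\tfrac{1-d_{H}}{3}$ is the correct value, so the difficulty you flagged with the mixed term is a real one, not a routine verification to be postponed.
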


\begin{proof}
Suppose that the discriminant of $H=\frac{(-3,d_{H})}{\QQ}$ is $d_{H}$
i.e. $D_{H}=d_{H}=1\mod3$. Let $k\in\NN$ such that $d_{H}=3k+1$.
The quaternions $j$, $\phi_{o}$ and $\t$ are integral over $\ZZ$
and $\OO_{m}$ is a ring. Consider $v_{1}=1,v_{2}=j,v_{3}=\t,v_{4}=j\t$.
The matrix $\left(\Tr(v_{s}v_{t})\right)_{1\leq s,t\leq4}$ is {\small{}
\[
\left(\begin{array}{cccc}
2 & -1 & 0 & 1\\
-1 & -1 & 1 & -1\\
0 & 1 & 2k & -k\\
1 & -1 & -k & 2k+1
\end{array}\right)
\]
}which has determinant $-d^{2}=-D_{H}^{2}$, thus $\cO_{m}$ is maximal
by \cite[Proposition 1.32]{AlsinaBayer}.\\
Suppose that the discriminant of $H=\frac{(-3,d)}{\QQ}$ is $D_{H}=3d_{H}$.
Then $d_{H}=p_{1}\cdot\dots\cdot p_{m}$ is a product of an odd number
of different primes (eventually including $-1$) congruent to $2\mod3$.
The elements $j$ and $\phi_{o}$ are integral over $\ZZ$. Consider
$v_{1}=1,v_{2}=j,v_{3}=\phi_{o},v_{4}=j\phi_{o}$. The matrix $\left(\Tr(v_{s}v_{t})\right)_{1\leq s,t\leq4}$
is {\small{}
\[
\left(\begin{array}{cccc}
2 & -1 & 0 & 0\\
-1 & -1 & 0 & 0\\
0 & 0 & 2d & -d\\
0 & 0 & -d & 2d
\end{array}\right)
\]
}which has determinant $-9d^{2}=-D_{H}^{2}$, thus $\cO_{m}$ is maximal
by \cite[Proposition 1.32]{AlsinaBayer}. 

For the second assertion, we only give a sketch of the proof (see
the long version of that paper on arXiv for a complete proof). The
idea is to write $j'=c_{1}+c_{2}j+c_{3}\t+c_{4}j\t$ for integers
$c_{i}$ and to remark that $j'^{2}=\bar{j'}$, so that $1=j'^{3}=j'\bar{j'}$
is expressed as a quadratic form in the $c_{i}$. The hypothesis on
$H$ (which is in fact $d_{H}<0$) insures that this quadratic form
is definite positive, and finally the only solution for the relation
$j'^{2}+j'+1=0$ to hold is that $j'=j$ or $j'=j^{2}$.
\end{proof}
Let us define the order 
\[
\OO_{\mu}=\ZZ[j]+\ZZ[j]\mu\t\subset\OO_{m},
\]
where $\t$ equals $\frac{r}{3}(\phi_{o}-1)$ or $\phi_{o}$ according
if $D_{H}=1\text{ or }0\mod3$. Let $\OO$ be an order contained
in $\OO_{m}$. 
\begin{prop}
\label{prop:All-Orders}Suppose that $j\in\OO$. There exists $\mu\in\ZZ[j]$
such that $\OO=\OO_{\mu}$ and the discriminant of $\OO_{\mu}$ is
$\mu\bar{\mu}D_{H}$.
\end{prop}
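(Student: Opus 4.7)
The plan is to exploit the decomposition $\OO_m = \ZZ[j] \oplus \ZZ[j]\t$ (as a $\ZZ$-module) and project an arbitrary order $\OO$ containing $\ZZ[j]$ onto its ``second coordinate'', which will turn out to be a $\ZZ[j]$-ideal, hence principal.

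First, I would fix notation: since $\OO \subset \OO_m$, every $x \in \OO$ has a unique expression $x = a(x) + b(x)\t$ with $a(x), b(x) \in \ZZ[j]$, and the map $\pi \colon \OO \to \ZZ[j]$, $x \mapsto b(x)$, is an additive homomorphism whose kernel equals $\OO \cap \QQ(j)$. Because $\ZZ[j]$ is the maximal order of $\QQ(j)$ and $\ZZ[j] \subset \OO$, this kernel is exactly $\ZZ[j]$. I would then check that $I := \pi(\OO)$ is a $\ZZ[j]$-ideal: for $x = a + b\t \in \OO$, left multiplication by $j \in \OO$ gives $jx = ja + (jb)\t \in \OO$, so $\pi(jx) = jb$. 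Hence $jI \subset I$, and $I$ is a $\ZZ[j]$-submodule of $\ZZ[j]$, i.e., an ideal. Since $\ZZ[j]$ is a principal ideal domain, $I = \mu\ZZ[j]$ for some $\mu \in \ZZ[j]$.

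Next, I would pick any $\xi \in \OO$ with $\pi(\xi) = \mu$; write $\xi = \alpha + \mu\t$ with $\alpha \in \ZZ[j]$. Because $\alpha \in \ZZ[j] \subset \OO$, we obtain $\mu\t = \xi - \alpha \in \OO$, so $\ZZ[j] + \ZZ[j]\mu\t \subset \OO$. For the reverse inclusion, an arbitrary $x = a + b\t \in \OO$ has $b \in I = \mu\ZZ[j]$, hence $b = k\mu$ for some $k \in \ZZ[j]$, so $x = a + k(\mu\t) \in \ZZ[j] + \ZZ[j]\mu\t$. This gives $\OO = \OO_\mu$.

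For the discriminant computation, I would avoid explicitly calculating the $4 \times 4$ trace matrix and instead use the relation between discriminants of nested orders together with the index formula. One has
\[
\OO_m/\OO_\mu \;\cong\; \ZZ[j]\t \,/\, \ZZ[j]\mu\t \;\cong\; \ZZ[j]/\mu\ZZ[j],
\]
so $[\OO_m : \OO_\mu] = N_{\QQ(j)/\QQ}(\mu) = \mu\bar\mu$. Since passing to a suborder multiplies the (ordinary) discriminant by the square of the index, the reduced discriminants satisfy $D_{\OO_\mu} = [\OO_m : \OO_\mu]\, D_{\OO_m} = \mu\bar\mu\, D_H$, using that the preceding proposition identified $D_{\OO_m} = D_H$.

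The argument has no real obstacle once the key structural input is recognized, namely that $\ZZ[j]$ is a PID, which reduces an a priori complicated rank-$2$ $\ZZ[j]$-module to one free generator $\mu\t$. The only point requiring care is to verify the $\ZZ[j]$-module structure via left multiplication (rather than only the $\ZZ$-module structure); once this is done, the rest is formal.
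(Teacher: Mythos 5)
Your argument is correct and follows essentially the same route as the paper: both proofs rest on the fact that $\ZZ[j]$ is a principal ideal domain, so that $\OO$, viewed inside $\OO_{m}=\ZZ[j]\oplus\ZZ[j]\t$, is forced to be $\ZZ[j]\oplus\ZZ[j]\mu\t$ for a generator $\mu$ of the ideal cut out by the $\t$-coordinate (your projection $\pi$ just makes the paper's one-line module-theoretic assertion explicit). The only divergence is cosmetic: for the discriminant you use the index formula $D_{\OO_{\mu}}=[\OO_{m}:\OO_{\mu}]\,D_{\OO_{m}}=\mu\bar{\mu}D_{H}$ instead of the paper's direct computation of $\det\left(\Tr(\g_{s}\g_{t})\right)$, which is equally valid given that the previous proposition yields $D_{\OO_{m}}=D_{H}$.
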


\begin{proof}
Since $\OO$ contains $\ZZ[j]$, this is a rank $2$ $\ZZ[j]$-module
and since $\ZZ[j]$ is principal, we can write $\OO=\ZZ[j]\oplus\ZZ[j]\tau$
for some $\tau$ in the order $\OO_{m}$, which has basis $1,j,\t,j\t$,
and the result follows. The assertion on the discriminant follows
from a direct computation.
\end{proof}
Let $\mu,\mu'\in\ZZ[j]$ be such that $\mu\bar{\mu}=\mu'\bar{\mu'}$. 
\begin{prop}
\label{prop:order-Omu-iso-Omu-prime}The order $\OO_{\mu}$ is isomorphic
to $\OO_{\mu'}$.
\end{prop}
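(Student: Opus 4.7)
My plan is to construct an explicit ring isomorphism $\Phi:\OO_{\mu}\to\OO_{\mu'}$ fixing $\ZZ[j]$, by exploiting the fact that the ring structure of $\OO_{\mu}$ is essentially dictated by the single integer $\mu\bar\mu$. The argument branches according to which of the two formulas for $\t$ is in force.

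When $\t=\phi_{o}$ (so $3\mid D_{H}$), the relation $j\phi_{o}=\phi_{o}\bar j$ gives the clean commutation $(\mu\t) j=\bar j\,(\mu\t)$, and $(\mu\t)^{2}=\mu\bar\mu\,d_{H}\in\ZZ$. As a $\ZZ$-algebra, $\OO_{\mu}$ therefore admits the abstract presentation $\ZZ[j]\langle X\rangle/(Xj-\bar j X,\ X^{2}-\mu\bar\mu\,d_{H})$, which depends on $\mu$ only through its norm. The $\ZZ$-linear map $1,j,\mu\t,j\mu\t\mapsto 1,j,\mu'\t,j\mu'\t$ is then a ring isomorphism, the only nontrivial check being the scalar relation where the hypothesis $\mu\bar\mu=\mu'\bar{\mu'}$ is used.

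When $\t=\tfrac{r}{3}(\phi_{o}-1)$ (so $3\nmid D_{H}$), a short computation using $j\phi_{o}=\phi_{o}\bar j$ gives instead the twisted rule $\t j=\bar j\t+1$, so that $(\mu\t)\,j=\bar j\,(\mu\t)+\mu$ and $(\mu\t)^{2}=\mu\bar\mu\,\t^{2}+b_{\mu}\,(\mu\t)$, where $b_{\mu}$ is the $j$-coefficient of $\mu$; the naive map fails when $b_{\mu}\neq b_{\mu'}$. My plan in this case is to search for an element $x'=\alpha+\beta\mu'\t\in\OO_{\mu'}$, with $\alpha,\beta\in\ZZ[j]$, satisfying the commutation identity $x'j-\bar j x'=\mu$ and the norm identity $\Nr(x')=-\mu\bar\mu\,\t^{2}$, and then to set $\Phi(j)=j,\ \Phi(\mu\t)=x'$. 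The commutation identity forces $\Tr(x')=b_{\mu}$ as a by-product, and then the Cayley--Hamilton identity for quaternions gives $(x')^{2}=b_{\mu}x'+\mu\bar\mu\,\t^{2}$ automatically, so $\Phi$ extends $\ZZ$-linearly to a ring homomorphism; since the image basis is $\ZZ$-independent it is in fact a ring isomorphism. The commutation identity expands, after writing the two sides in $\ZZ[j]$-coordinates, to a congruence modulo the prime ideal $(1-j)$ of $\ZZ[j]$ above $3$; this is solvable because $\mu$ and $\mu'$ lie in $(1-j)$ simultaneously (both iff $3\mid\mu\bar\mu$, which coincides with $3\mid\mu'\bar{\mu'}$). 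The main obstacle is the additional verification that $\beta$ can be chosen among the linear solutions so as to realise the prescribed value of $\Nr(x')$: this is a representation problem for the reduced norm form on $\OO_{\mu'}$, which in the indefinite Eichler case ($\mu\bar\mu$ square-free and coprime to $D_{H}$) is an immediate consequence of Eichler's theorem \cite[Cor.~1.58]{AlsinaBayer} applied to the two Eichler orders $\OO_{\mu}$ and $\OO_{\mu'}$ of the same level, and in general reduces to an elementary calculation using $\mu\bar\mu=\mu'\bar{\mu'}$.
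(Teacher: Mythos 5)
Your first case ($\t=\phi_{o}$, i.e.\ $3\mid D_{H}$) is correct and complete: the structure constants of $\OO_{\mu}$ in the basis $1,j,\mu\t,j\mu\t$ depend only on $\mu\bar{\mu}$ and $d_{H}$, so the basis-to-basis map is a ring isomorphism (and by Skolem--Noether an abstract ring isomorphism of orders is as good as conjugacy, so this is an acceptable notion of isomorphism). In the second case ($\t=\tfrac{r}{3}(\phi_{o}-1)$) your preparatory identities are also correct: $\t j=\bar{j}\t+1$, the commutation identity does force $\Tr(x')=b_{\mu}$, and Cayley--Hamilton then gives the required quadratic relation. But the decisive step --- the existence of $x'=\alpha+\beta\mu'\t\in\OO_{\mu'}$ satisfying simultaneously the commutation identity and $\Nr(x')=-\mu\bar{\mu}\t^{2}$ --- is exactly where the hypothesis $\mu\bar{\mu}=\mu'\bar{\mu'}$ has to do its work, and you do not prove it. The appeal to \cite[Corollary 1.58]{AlsinaBayer} covers only $H$ indefinite with $\mu\bar{\mu}$ square-free and coprime to $D_{H}$, whereas the proposition is stated for arbitrary $\mu$ and for definite $H$ as well (the paper restricts to the indefinite case only afterwards); moreover in that sub-case the corollary already gives the conjugacy of $\OO_{\mu}$ and $\OO_{\mu'}$ outright, so it replaces your construction rather than completing it. The remaining ``elementary calculation'' is asserted, not performed. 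A second, smaller, gap: $\ZZ$-independence of $1,j,x',jx'$ only gives injectivity; surjectivity onto $\OO_{\mu'}$ needs an argument.

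The gap is fillable, and more cheaply than your ``representation problem'' framing suggests. Write $H=\QQ[j]\oplus\QQ[j]\phi_{o}$ and $x'=s+t\phi_{o}$; since $x'j-\bar{j}x'=s(j-\bar{j})=sr$, the commutation identity is equivalent to $s=\mu r^{-1}=-\tfrac{1}{3}\mu r$, i.e.\ (as $\mu'\t=-\tfrac{1}{3}\mu'r+\tfrac{1}{3}\mu'r\phi_{o}$, so $s=\alpha-\tfrac{1}{3}\beta\mu'r$ and $t=\tfrac{1}{3}\beta\mu'r$) to $\alpha=\tfrac{1}{3}(\beta\mu'-\mu)r\in\ZZ[j]$, which is your congruence $\beta\mu'\equiv\mu\pmod{(1-j)}$. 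Now $\Nr(s+t\phi_{o})=s\bar{s}-d_{H}t\bar{t}$, so using $\mu'\bar{\mu'}=\mu\bar{\mu}$ and $\t^{2}=\tfrac{d_{H}-1}{3}$ the norm condition reduces exactly to $\beta\bar{\beta}=1$: the norm form does not need to represent anything nontrivial, $\beta$ is forced to be a unit of $\ZZ[j]$. Since the six units surject onto $(\ZZ[j]/(1-j))^{*}=\{\pm1\}$, a unit with the required residue always exists (and when $(1-j)$ divides $\mu'$, hence $\mu$, any unit works), with no hypothesis of indefiniteness or on $\mu\bar{\mu}$; and because $\beta$ is a unit, $\ZZ[j]+\ZZ[j]x'=\ZZ[j]+\ZZ[j]\mu'\t=\OO_{\mu'}$, which settles surjectivity as well. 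For comparison, the paper's own proof is shorter and uniform in the two cases of $\t$: it factors $\mu$ and $\mu'$ into primes of $\ZZ[j]$ and conjugates by a suitable $\delta\in\ZZ[j]$ built from the split primes; conjugation fixes $\ZZ[j]$ pointwise and carries each of $\OO_{\mu}$, $\OO_{\mu'}$ onto a common order $\OO_{\mu''}$, with no case distinction and no external theorem.
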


\begin{proof}
The only trouble is with the primes $p_{1},\dots,p_{r}$ dividing
$\mu\bar{\mu}$ which are split in $\ZZ[j]$. Let $\mathfrak{p}_{1},\bar{\mathfrak{p}}_{1},\dots,\mathfrak{p}_{r},\bar{\mathfrak{p}}_{r}$
be the primes in $\ZZ[j]$ over these primes $p_{k}$. There is a
unique factorization (up to invertible elements): 
\[
\mu=r^{a}\mathfrak{p}_{1}^{a_{1}}\bar{\mathfrak{p}}_{1}^{b_{1}}\dots\mathfrak{p}_{r}^{a_{r}}\bar{\mathfrak{p}}_{r}^{b_{r}}m_{2},
\]
for some integers $a,a_{k},b_{k}$, and $m_{2}\in\ZZ$, a product
of primes congruent to $2\mod3$. Consider $\d=\mathfrak{p}_{1}^{b_{1}}\mathfrak{p}_{2}^{b_{2}}\dots\mathfrak{p}_{r}^{b_{r}}$
and define
\[
\mu''=r^{a}\mathfrak{p}_{1}^{a_{1}+b_{1}}\mathfrak{p}_{2}^{a_{2}+b_{2}}\dots\mathfrak{p}_{r}^{a_{r}+b_{r}}m_{2}.
\]
One can check that $\d(\bar{\d}\phi_{o})\d^{-1}=\d\phi_{o}$ (thus
$\d(\mu\phi_{o})\d^{-1}=\mu''\phi_{o}$), that $\d\bar{\d}\t\d^{-1}=\d\t+\g$
for some $\g\in\ZZ[j]$ (thus $\d(\mu\t)\d^{-1}\in\OO_{\mu''}$),
and that therefore the conjugation map $x\mapsto\d x\d^{-1}$ is an
isomorphism $\OO_{\mu}\mapsto\OO_{\mu''}$. We obtain similarly that
$\OO_{\mu'}$ is isomorphic to $\OO_{\mu''}$, since $\mu'=(-j)^{b}r^{a}\mathfrak{p}_{1}^{a_{1}'}\bar{\mathfrak{p}}_{1}^{b_{1}'}\dots\mathfrak{p}_{r}^{a_{r}'}\bar{\mathfrak{p}}_{r}^{b_{r}'}m_{2}$
for some integers $b,a_{k}',b_{k}'$ such that $a_{k}'+b_{k}'=a_{k}+b_{k}$,
and the result follows. 
\end{proof}
Suppose that the quaternion algebra $H/\QQ$ is indefinite. Then all
the maximal orders are conjugated to $\OO_{m}$. Therefore:
\begin{cor}
\label{cor:order-determined-by-discri}An order of $H$ containing
$\ZZ[j]$ is determined up to isomorphism by its discriminant.
\end{cor}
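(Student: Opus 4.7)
My plan is to show that every order $\OO$ of $H$ containing $\ZZ[j]$ is isomorphic (as a $\QQ$-algebra) to some $\OO_\mu$. Since the reduced discriminant of $\OO_\mu$ is $\mu\bar\mu\,D_H$ by Proposition~\ref{prop:All-Orders}, the value of $\mu\bar\mu$ is forced by the discriminant of $\OO$; Proposition~\ref{prop:order-Omu-iso-Omu-prime} then declares the isomorphism class of $\OO_\mu$ to depend only on $\mu\bar\mu$, so that $\OO$ is determined up to isomorphism by its discriminant, as required.

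To produce the identification, first embed $\OO$ in a maximal order $\OO_M$. The statement immediately preceding the corollary supplies $g\in H^*$ with $g\OO_M g^{-1}=\OO_m$, whence $g\OO g^{-1}\subset\OO_m$ is ring-isomorphic to $\OO$. The order-three element it contains is now $j':=gjg^{-1}$, which a~priori is neither $j$ nor $j^2$, so Proposition~\ref{prop:All-Orders} does not apply on the nose. To repair this I use Skolem--Noether to pick $h\in H^*$ with $hj'h^{-1}=j$: then $hg\OO g^{-1}h^{-1}$ contains $\ZZ[j]$, sits inside the maximal order $h\OO_m h^{-1}$, and remains isomorphic to $\OO$. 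A further adjustment by an element $k\in\QQ(j)^*$ carrying $h\OO_m h^{-1}$ back onto $\OO_m$ produces a sub-order of $\OO_m$ that contains $\ZZ[j]$ and is still isomorphic to $\OO$. Proposition~\ref{prop:All-Orders} now identifies this image with some $\OO_\mu$, and $\mu\bar\mu=D_\OO/D_H$ since reduced discriminants are invariant under conjugation; Proposition~\ref{prop:order-Omu-iso-Omu-prime} then closes the argument.

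\textbf{Main obstacle.} The only non-routine step is the existence of $k\in \QQ(j)^*$ sending $h\OO_m h^{-1}$ onto $\OO_m$, i.e.~the assertion that any two maximal orders of $H$ containing the fixed copy of $\ZZ[j]$ are conjugate by an element of the centralizer of $j$ in $H^*$, rather than merely by a general element of $H^*$. This refines the plain conjugacy of maximal orders recorded just before the corollary, and ultimately rests on the principality of the Eisenstein ring $\ZZ[j]$ together with the fact that maximal orders in indefinite quaternion algebras over $\QQ$ have class and type number one, which removes the local-to-global obstruction to finding such a centralizing conjugator.
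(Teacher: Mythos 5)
Your proposal follows the same skeleton as the paper — reduce to a suborder of the standard maximal order $\OO_{m}$ containing the fixed $j$, then invoke Proposition \ref{prop:All-Orders} (every such suborder is some $\OO_{\mu}$ with discriminant $\mu\bar{\mu}D_{H}$) and Proposition \ref{prop:order-Omu-iso-Omu-prime} (the class of $\OO_{\mu}$ depends only on $\mu\bar{\mu}$) — but it diverges on the one delicate point, namely that conjugating the ambient maximal order onto $\OO_{m}$ need not carry $j$ to $j$. The paper deals with this only implicitly: the corollary is stated right after the remark that all maximal orders are conjugate, and the alignment of the order-three element is addressed in the following paragraph by asserting (without proof) that the Atkin--Lehner involutions of $\OO_{m}$ act transitively on the orbits of order $3$ subgroups of $\OO_{m}^{*}$. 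You instead normalize $j$ first by Skolem--Noether and then restore the maximal order by a conjugation inside the centralizer $\QQ(j)^{*}$, which requires your ``main obstacle'': any two maximal orders containing the fixed $\ZZ[j]$ are $\QQ(j)^{*}$-conjugate. That claim is correct, and your diagnosis of why is the right one: locally, at every prime the torus $\QQ_{p}(j)^{*}$ acts transitively on the maximal orders of $H_{p}$ containing $\ZZ_{p}[j]$ (a line of vertices in the split-in-$\QQ(j)$ case, a single vertex in the inert case, an edge swapped by $\sqrt{-3}$ at $3$, and the unique maximal order at ramified primes of $H$), and the class number one of $\ZZ[j]$ lets you replace the resulting idele by a global element of $\QQ(j)^{*}$ up to local units, which normalize each $\OO_{p}$. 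You only sketch this local--global step, whereas a complete write-up should spell it out; note also that once you have this $\QQ(j)^{*}$-transitivity you do not actually need the initial Skolem--Noether step or type number one at all, since $\OO_{m}$ and any maximal order containing your $\OO$ both contain $\ZZ[j]$ and are then directly $\QQ(j)^{*}$-conjugate. Compared with the paper, your route trades the unproved Atkin--Lehner transitivity assertion for a verifiable class-field-theoretic argument, at the cost of some local bookkeeping; both yield the corollary.
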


Still in the indefinite case, let $\OO\subset H$ be an order containing
an element $j'$ with $j'^{2}+j'+1=0$. Up to conjugation by an element
of $H$, we can suppose that $\OO\subset\OO_{m}$. The group of Atkin--Lehner
involutions of $\OO_{m}$ acts transitively on the orbits of order
$3$ sub-groups of $\OO_{m}^{*}$. Therefore, up to conjugation and
replacing $j'$ by $j'^{2}$, we can suppose that up to isomorphism
$j'=j$. We thus obtained up to isomorphism all the possible endomorphism
rings of abelian surfaces which have Picard number $3$ and an order
$3$ symplectic automorphism. 
\begin{rem}
Note that using Corollary \ref{cor:Two-complex-toriUnik-endo}, one
obtains more generally that for any endomorphism ring of a complex
torus $A$ with an order $3$ symplectic group, there exists a quaternion
algebra $H$ and $\mu\in\ZZ[j]$ such that $\End(A)=\OO_{\mu}$. Indeed
we will see below that each ring $\OO_{\mu}$ determines such a complex
torus $B$, and the values of $L_{Y}^{2}$ (for $Y=\Km_{3}(B)$) when
$H$ and $\mu$ varies form the set $\{\ell\in\ZZ\,|\,\ell=0\text{ or }2\mod6\}$. 
\end{rem}

\subsection{\label{subsec:Shimura-curves}Shimura curves}

 Let us denote by $\mathcal{H}=\{z\in\CC\,|\,\Im m(z)>0\}$ the Poincaré
upper-half plane and by $\bar{\cH}$ its conjugate. The group $SL_{2}(\RR)$
(mod $\pm I_{2}$) acts on $\mathcal{H}$ (and also on $\bar{\cH}$)
by homographic transformations: 
\[
\left(\g=\left(\begin{array}{cc}
a & b\\
c & d
\end{array}\right),z\right)\in SL_{2}(\RR)\times\mathcal{H}\longrightarrow\g(z)=\frac{az+b}{cz+d}\in\mathcal{H}.
\]
A transformation $\g$ is called \textit{elliptic} if it has a fixed
point $z\in\mathcal{H}$. This is equivalent to require $|\Tr(\g)|<2$,
in particular $\g\neq\pm I_{2}$. 

Let $\G$ be a discrete (for the usual topology) subgroup of $SL_{2}(\RR)$.
A point $z\in\mathcal{H}$ is called elliptic with respect to $\G$
if there exists an elliptic transformation $\g\in\G$ such that $\g(z)=z$.
The isotropy group of a point $z\in\mathcal{H}$ is the group $\G_{z}=\{\g\in\G\,|\,\g(z)=z\}$.
Since $\G$ is discrete, the isotropy group of an elliptic point is
finite and cyclic. By definition, the \textit{order} of an elliptic
point $z$ is $|\G_{z}|$ if $-I_{2}\notin\G$, and $\frac{1}{2}|\G_{z}|$
if $-I_{2}\in\G$. If $z\in\mathcal{H}$ is an elliptic point, then
for any $\g\in\G$, the element $\g(z)\in\mathcal{H}$ is an elliptic
point and their isotropy groups are conjugate: $\G_{\g(z)}=\g\G_{z}\g^{-1}$. 

Let $H=\frac{(c,d)}{\QQ}$ be an indefinite quaternion algebra over
$\QQ$, with $d>0$. Let $I\subset H$ be a $\ZZ$-ideal and let $\OO$
be the left order of $I$. We denote by $\OO^{*}$ the group of invertible
elements and by $\OO^{1}\subset\OO^{*}$ the sub-group of reduced
norm $1$ elements. The group $\OO^{1}$ has index $2$ or $1$ in
$\OO^{*}$ according if there exists an element of $\OO^{*}$ of norm
$-1$ or not. The group $\G(\OO^{1}):=\Phi(\OO^{1})$ (respectively
$\G(\OO^{*}):=\Phi(\OO^{*})$) is a sub-group of $SL_{2}(\QQ(\sqrt{d})),$
which is discrete in $SL_{2}(\RR)$ (where the map $\Phi$ is defined
in Section \ref{subsec:First-definitions-and}). The elements of $\G(\OO^{*})$
are called \textit{quaternionic transformations}. Since $H$ is defined
over $\QQ$, the elliptic transformations of $\G(\OO^{*})$ have order
$2$ or $3$ (see \cite[Lemma 2.10]{AlsinaBayer}).

The group $\G(\OO^{*})$ (respectively $\G(\OO^{1})$ acts on $\mathcal{H}\cup\bar{\cH}$
(respectively $\cH$ and $\bar{\cH}$). If it exists, an element $\g\in\OO^{*}$
of norm $-1$ exchanges the upper and lower half-planes $\cH$ and
$\bar{\cH}$. The quotient $\cH/\G(\OO^{1})$ is a complex Riemann
surface. The quotient $\mathcal{X}(\OO)=(\mathcal{H}\cup\bar{\cH})/\G(\OO^{*})$
is isomorphic to $\cH/\G(\OO^{1})$ if and only if $-1$ is a norm.
If $-1$ is not a norm, then $\mathcal{X}(\OO)$ is the disjoint union
of the two curves $\cH/\G(\OO^{1})$, $\bar{\cH}/\G(\OO^{1})$. The
compactification of $\mathcal{X}(\OO)$ is a Shimura curve. The curve
$\mathcal{H}/\G(\OO^{1})$ is compact if and only if $H$ is a division
algebra; we will mainly suppose that this is the case in the following.
A point $p$ on the curve $\mathcal{H}/\G(\OO^{1})$ is called elliptic
if this is the image of some elliptic point $z$ for $\G(\OO^{1})$
by the quotient map $\mathcal{H}\to\mathcal{H}/\G(\OO^{1})$; by definition,
its order is the order of the elliptic point $z$. We remark that
$\Phi(-1)=-I_{2}\in\G(\OO^{1})$ and we obtain immediately:
\begin{prop}
\label{prop:Bijection-oder3elliptic-gpe}There is a bijection between
the set of elliptic points of order $k$ on $\mathcal{H}/\G(\OO^{1})$
and the set of orbits under conjugation by $\G(\OO^{1})$ of order
$2k$ subgroups of $\G(\OO^{1})$.
\end{prop}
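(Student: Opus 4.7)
The plan is to construct mutually inverse maps between the two sets. In one direction, I would send an elliptic point $[z] \in \mathcal{H}/\G(\OO^{1})$ of order $k$ to the $\G(\OO^{1})$-conjugacy class of its stabilizer $\G_{z} \subset \G(\OO^{1})$. Since $\Phi(-1) = -I_{2} \in \G(\OO^{1})$, the definition of the order of an elliptic point (in the case $-I_{2}\in\G$) gives $|\G_{z}| = 2k$; and if $z' = \g(z)$ with $\g \in \G(\OO^{1})$, then $\G_{z'} = \g \G_{z} \g^{-1}$, so the resulting conjugacy class depends only on $[z]$.

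In the reverse direction, given a subgroup $C \subset \G(\OO^{1})$ of order $2k$, I would use two standard facts about $SL_{2}(\RR)$: every finite subgroup is cyclic, and every finite-order element other than $\pm I_{2}$ is elliptic (its trace has absolute value less than $2$). Since $|C| = 2k \geq 4$, the cyclic group $C$ contains an elliptic element whose unique fixed point $z_{C} \in \mathcal{H}$ must be fixed by every element of $C$, since they all commute with the generator; hence $C \subset \G_{z_{C}}$. Sending $[C]$ to $[z_{C}]$ is well defined, because a conjugation $C' = \g C \g^{-1}$ moves $z_{C}$ to $\g(z_{C}) = z_{C'}$.

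The main obstacle is to promote the inclusion $C \subset \G_{z_{C}}$ to an equality, so that $z_{C}$ is genuinely an elliptic point of order $k$ and not of higher order. For this I would invoke the lemma of Alsina--Bayer cited above the statement, which forces every elliptic transformation in $\G(\OO^{*})$ to have order $2$ or $3$, and so $|\G_{z_{C}}| \in \{4,6\}$. Combined with $|C| = 2k \in \{4,6\}$ imposed by the same constraint, a cyclic group of order $4$ is not contained in one of order $6$ (no subgroup of order $4$ exists in $\ZZ/6\ZZ$), and a cyclic group of order $6$ is not contained in one of order $4$ (by Lagrange), so $|C| = |\G_{z_{C}}|$ and thus $C = \G_{z_{C}}$. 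With this equality the two maps are mutually inverse: from $[z]$, the unique fixed point in $\mathcal{H}$ of any non-trivial element of $\G_{z}$ is $z$ itself, and from $[C]$ we recover $C$ as $\G_{z_{C}}$.
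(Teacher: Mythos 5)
Your argument is correct and is essentially the paper's own (unwritten) reasoning: the paper deduces the proposition ``immediately'' from the remark that $\Phi(-1)=-I_{2}\in\G(\OO^{1})$, i.e.\ exactly the stabilizer/fixed-point correspondence you set up, with $|\G_{z}|=2k$ coming from the definition of the order of an elliptic point when $-I_{2}\in\G$. Your use of the Alsina--Bayer fact that elliptic transformations of $\G(\OO^{*})$ have order $2$ or $3$ to force $C=\G_{z_{C}}$ (ruling out a proper containment of finite cyclic subgroups) is precisely the point the paper leaves implicit, and it is the right way to close that gap.
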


The curve $\mathcal{X}(\OO)=(\mathcal{H}\cup\bar{\cH})/\G(\OO^{*})$
is a coarse moduli space for abelian surfaces with quaternionic multiplication
by $\OO$. 

Let $e_{k}(\OO)$ be the number of elliptic points of order $k$ ($k\in\{2,3\}$)
of the group $\G(\OO^{1})$. Suppose that $\OO$ is an Eichler order
of level $N$ (necessarily coprime to $D_{H}$) or is maximal $(N=1)$,
then
\begin{prop}
\label{prop:-The-numberellipticPoints}\cite[Proposition 2.29]{AlsinaBayer}
The number of order $3$ elliptic points of $\mathcal{H}/\G(\OO^{1})$
is{\small{}
\[
\begin{array}{c}
e_{3}(\OO)=\left\{ \begin{array}{cc}
\prod_{p|D_{H}}(1-\left(\frac{-3}{p}\right))\prod_{p|N}(1+\left(\frac{-3}{p}\right)) & \text{if}\,\,9\nmid N\\
0 & \text{if}\,\,9|N,
\end{array}\right.\end{array}
\]
}where $D_{H}$ is the discriminant of $H$.
\end{prop}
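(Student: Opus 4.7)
The plan is to use Proposition \ref{prop:Bijection-oder3elliptic-gpe} to translate the elliptic point count into a count of ring embeddings $\ZZ[j] \hookrightarrow \OO$, then apply a local-global principle à la Eichler to express this count as a product of local factors, and finally compute each factor by analyzing $\OO_p$ at the ramified, Eichler and split primes separately.

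First, by Proposition \ref{prop:Bijection-oder3elliptic-gpe}, order-$3$ elliptic points on $\mathcal{H}/\G(\OO^{1})$ correspond bijectively to $\G(\OO^{1})$-conjugacy classes of order-$6$ cyclic subgroups of $\G(\OO^{1})$. Every such subgroup contains $\Phi(-1) = -I_2$ and is generated by $\Phi(\gamma)$ for some $\gamma \in \OO^{1}$ with $\gamma^{2}+\gamma+1 = 0$. Such a $\gamma$ is exactly the image of $j$ under an optimal ring embedding $\iota : \ZZ[j] \hookrightarrow \OO$, and two embeddings $\iota, \iota'$ define the same order-$6$ subgroup iff $\iota' \in \{\iota, \iota \circ c\}$, where $c$ is the nontrivial Galois automorphism of $\ZZ[j]$. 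Thus I would reduce the proof to counting optimal embeddings $\ZZ[j]\hookrightarrow\OO$ modulo $\OO^{1}$-conjugation combined with $c$.

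Next I would invoke Eichler's classical optimal embedding theorem (see e.g.\ Vign\'eras, \emph{Arithm\'etique des Alg\`ebres de Quaternions}, Ch.~III). Since $H$ is indefinite, strong approximation applies to $H^{1}$; since $\OO$ is Eichler and the Eisenstein order $\ZZ[j]$ has class number $1$, the global count of optimal embedding classes factors as $\prod_{p} m_p$, where $m_p$ is the number of local optimal embeddings of $\ZZ_p[j]$ into $\OO_p$ up to $\OO_p^\times$-conjugation. At every prime $p \nmid ND_{H}$ one has $\OO_p \simeq M_{2}(\ZZ_p)$ and $m_p = 1$, so the product collapses to a finite product over $p \mid ND_{H}$.

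I would then compute each local factor. For $p \mid D_{H}$, $\OO_p$ is the maximal order in the division quaternion algebra over $\QQ_p$; only the unramified quadratic order embeds there, giving $m_p = 1 - \left(\tfrac{-3}{p}\right)$ (which is $2$ exactly when $\QQ_p(j)/\QQ_p$ is inert, i.e.\ $p \equiv 2 \mod 3$). For $p \mid N$, $p \nmid D_{H}$, $\OO_p$ is an Eichler order in $M_{2}(\QQ_p)$; an explicit matrix computation (choose a basis so that $\OO_p$ consists of matrices with lower-left entry in $p^{v_p(N)}\ZZ_p$) shows that optimal embeddings of $\ZZ_p[j]$ exist precisely when $\QQ_p(j)$ splits at $p$, yielding $m_p = 1 + \left(\tfrac{-3}{p}\right)$ when $v_p(N) = 1$, and multiplicatively at higher powers. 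Multiplying the local contributions gives the stated formula.

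The main obstacle is the local analysis at $p = 3$. Here $\QQ_3(\sqrt{-3})$ is ramified, so $\left(\tfrac{-3}{3}\right) = 0$ and one cannot blindly apply the formulas at $p \neq 3$. One must check by hand that an optimal embedding of $\ZZ_{3}[j]$ into an Eichler order of level $3^{v}$ exists iff $v \leq 1$: the key point is that $j$ generates a ramified quadratic suborder, and the optimality condition $\iota(\ZZ_3[j]) = \iota(\QQ_3(j))\cap\OO_3$ fails whenever $v \geq 2$, producing the vanishing $e_{3}(\OO) = 0$ when $9 \mid N$. A final bookkeeping step must reconcile the factor of $2$ coming from the involution $j \leftrightarrow j^2$ with the index $[\OO^{\ast}\!:\!\OO^{1}]$ so that the resulting product of local numbers equals $e_{3}(\OO)$ rather than twice or half of it.
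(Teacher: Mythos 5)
The paper itself does not prove this proposition: it is quoted directly from Alsina--Bayer (their Proposition 2.29), the text's only ingredient being Proposition \ref{prop:Bijection-oder3elliptic-gpe} identifying elliptic points with conjugacy classes of finite subgroups. What you propose is essentially the standard proof of the cited result: reduce $e_{3}(\OO)$ to counting embeddings $\ZZ[j]\hookrightarrow\OO$ up to conjugacy (all of which are automatically optimal, since $\ZZ[j]$ is the maximal order of $\QQ(\sqrt{-3})$), invoke Eichler's local--global theorem for optimal embedding numbers (indefiniteness, strong approximation, $h(\ZZ[j])=1$), and evaluate the local factors at $p\mid D_{H}$ and $p\mid N$. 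As an outline this is the right route, and it is exactly how the formula is established in the literature the paper cites.

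Three points, however, need repair. First, at $p=3$ your mechanism is wrong: since $\ZZ_{3}[j]$ is the maximal order of $\QQ_{3}(\sqrt{-3})$, the optimality condition $\iota(\QQ_{3}(j))\cap\OO_{3}=\iota(\ZZ_{3}[j])$ can never fail; what fails for an Eichler order of level $3^{v}$ with $v\geq2$ is the existence of any embedding at all. Concretely, an element of reduced trace $-1$ and reduced norm $1$ in the standard level-$3^{v}$ Eichler order has diagonal entries $a,d$ with $a+d=-1$ and $ad\equiv1\pmod{3^{v}}$, forcing $a^{2}+a+1\equiv0\pmod{3^{v}}$, which is already insoluble modulo $9$; this is the correct source of the vanishing when $9\mid N$. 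Second, ``multiplicatively at higher powers'' is false at split primes: by Hijikata's local computation the number of optimal embeddings of the maximal order $\ZZ_{p}[j]$ into an Eichler order of level $p^{v}$ is $2$ for every $v\geq1$ (and $0$ at inert primes), not $2^{v}$; since the proposition is stated for arbitrary level $N$ (only $9\nmid N$ is assumed), you need this, e.g. $X_{0}(49)$ has exactly $2$ elliptic points of order $3$. Third, the ``final bookkeeping'' you defer is where the real content lies: Eichler's theorem in its usual form counts embeddings modulo $\OO^{*}$, whereas $e_{3}(\OO)$ is defined through $\OO^{1}$-conjugacy of order-$3$ subgroups of $\G(\OO^{1})$; you must check that $\iota$ and $\iota\circ c$ are never $\OO^{1}$-conjugate (if $\delta\gamma\delta^{-1}=\gamma^{-1}$ with $\delta\in\OO^{1}$, then $\delta$ fixes the unique fixed point of $\gamma$ in $\cH$, and the stabilizer there is cyclic, a contradiction) and then control the passage from $\OO^{*}$-classes to $\OO^{1}$-classes; the index $[\OO^{*}:\OO^{1}]$ enters at that step, not in the definition of $e_{3}$. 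None of this is fatal, but as written the $p=3$ argument and the higher-power claim are incorrect statements, and the normalization step is exactly the part that cannot be waved away.
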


This number is $0$ or a power of $2$. One has $e_{3}(\OO)=1$ if
and only if $D_{H}=1$ and $N=1$ or $3$.  We remark that as soon
as the level $N$ is divisible by a prime $p$ such that $p=2\mod3$,
one has $e_{3}(\OO)=0$, which means that there are no injective ring
homomorphism $\ZZ[j]\to\OO$, (where $j^{2}+j+1=0$).

\subsection{\label{subsec:Abel-surf-with-QM}Complex $2$-tori with QM and generalized
Kummer surfaces}

Let $H=\frac{(c,d)}{\QQ}$ be a quaternion algebra over $\QQ$ (definite
or not) and let $\OO\subset H$ be an order. One says that a complex
$2$-torus $A$ admits a quaternionic multiplication by $H$ if there
is an embedding 
\[
\iota:H\hookrightarrow\End(A)\otimes\QQ,
\]
moreover, $A$ admits a quaternionic multiplication by the order $\OO$
if 
\[
\iota(H)\cap\End(A)=\iota(\OO)
\]
in $\End(A)\otimes\QQ$ (then, for the general torus, one has $\End(A)=\iota(\OO)$).
Let $\Phi:H\hookrightarrow M_{2}(\CC)$ or $M_{2}(\RR)$ be an embedding
(see Section \ref{subsec:First-definitions-and}), according if $H$
is definite or not. The complex $2$-tori with quaternionic multiplication
by $\OO$ are those of the form 
\[
A_{u,v}=\CC^{2}/\Phi(I)\left(\begin{array}{c}
z\\
z'
\end{array}\right),
\]
where $I\subset\OO$ is a left $\OO$-ideal, for $z,z'\in\CC$ with
$zz'\neq0$ if $H$ is definite, and if $H$ is indefinite, $z'=1$
and $z$ is in $\cH\cup\bar{\cH}$; in that case we denote simply:
$A_{z}=A_{u,v}$. These results are due to Shimura \cite{Shimura}
for $H$ indefinite, and for $H$ definite, we refer to the paper
of Shimizu \cite[Section 4]{Shimizu}. Note that in the definite
case, if $\l\in\CC^{*}$, one has $A_{\l u,\l v}\simeq A_{u,v}$ and
the complex $2$-tori with quaternionic multiplication by $\OO$ are
therefore parametrized by $\PP^{1}$. 

When $H$ is indefinite, one has $A_{z}\simeq A_{w}$ if and only
if $z=\g(w)$ for some $\g\in\OO^{*}$, where $\OO^{*}=\{\g\in\OO\,|\,\Nr(\g)=\pm1\}$
is the group of invertible elements of $\OO$. 

Two left $\OO$-ideals $I,J$ give the same family of complex $2$-tori
with endomorphism ring $\OO$ if and only if there exists a principal
ideal $P$ such that $I=JP$. The automorphism group of the general
complex $2$-tori $A$ is isomorphic to $\OO^{*}$.  

\section{\label{sec:Junction-between-Shimura}Junction between Shimura and
Barth constructions}

\subsection{\label{subsec:The-polarization-on}The polarization on the generalized
Kummer surface and its isogeny classes}

As in Section \ref{subsec:Abel-surf-with-QM}, let $d_{H}$ be a square
free integer divisible only by primes congruent to $2\mod3$ and let
$H$ be the quaternion algebra $H=\frac{(-3,d_{H})}{\QQ}$, we denote
by $r,\phi_{o}$, the generators such that $r^{2}=-3$, $\phi_{o}^{2}=d_{H}$,
$\phi_{o}r=-r\phi_{o}$. The discriminant of $H$ satisfies $D_{H}=d_{H}$
if $d_{H}=1\mod3$ and $D_{H}=3d_{H}$ if $d_{H}=2\mod3$. Let us
also define $j=\frac{1}{2}(-1+r)$. We will suppose in this section
that $H$ is indefinite. 
\begin{defn}
For an integer $c$, we denote by $\rad_{2}(c)$ the product of the
primes congruent to $2\mod3$ that divide $c$ to an odd power (for
example $\text{rad}_{2}(12)=1$).
\end{defn}

Let us say that two generalized Kummer surfaces $X=\Km_{3}(A),\,Y=\Km_{3}(B)$
are in the same isogeny class if there exists an isogeny of complex
tori $A\to B$. If this is the case, there exists a natural dominant
rational map $X\dasharrow Y$. 

Let $A$ be an abelian surface with an order $3$ automorphism $J_{A}$,
with Picard number $3$ and quaternionic multiplication by $H$. The
aim of this section is to prove the following result:
\begin{thm}
\label{thm:Thm35}There exists $\mu\in\ZZ[j]$ such that endomorphism
ring of $A$ is (isomorphic to)
\[
\OO_{\mu}=\ZZ[j]+\ZZ[j]\mu\t,
\]
where $\t=\frac{r}{3}(\phi_{o}-1)$ or $\t=\phi_{o}$, according if
$D_{H}=1$ or $0\mod3$. The generalized Kummer surface $X$ is polarized
by $L_{X}$ with $L_{X}^{2}=2\mu\bar{\mu}D_{H}$. The integer $L_{X}^{2}$
determines $\End(A)$. \\
Let $X=\Km_{3}(A),Y=\Km_{3}(B)$ be two generalized Kummer surfaces.
If the surfaces $X$ and $Y$ are in the same isogeny class then $\rad_{2}(\frac{1}{2}L_{X}^{2})=\rad_{2}(\frac{1}{2}L_{Y}^{2})$.
\\
Suppose that $L_{X}^{2}=L_{Y}^{2}$. Then $\End(A)\simeq\End(B)$. 
\end{thm}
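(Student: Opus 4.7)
The plan is to combine Theorem~\ref{Thm:The-endomorphism-ring} with the classification of orders containing $\ZZ[j]$ developed in Section~\ref{subsec:Orders-containing-the-eisen}. First, since $A$ has Picard number $3$ and $L_X^2\neq 0$ in the indefinite case, Theorem~\ref{Thm:The-endomorphism-ring} identifies $\End(A)\otimes\QQ$ with a quaternion algebra containing $\QQ(j)$ and gives the reduced discriminant of $\End(A)$ as $\tfrac{1}{2}L_X^2$. Because $H$ is indefinite, every maximal order of $H$ is conjugate to $\OO_m$; the Atkin-Lehner argument recalled just after Corollary~\ref{cor:order-determined-by-discri} shows that, after replacing $\End(A)$ by an isomorphic copy (and possibly $J_A$ by $J_A^{2}$), one may assume $\End(A)\subset\OO_m$ with $J_A\mapsto j$. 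Proposition~\ref{prop:All-Orders} then produces $\mu\in\ZZ[j]$ with $\End(A)\simeq\OO_\mu$ and reduced discriminant $\mu\bar\mu D_H$; equating the two expressions for the discriminant yields $L_X^2=2\mu\bar\mu D_H$.

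The crux of showing that $L_X^2$ determines $\End(A)$ is to recover both $D_H$ and the norm $\mu\bar\mu$ from $L_X^2$. The norm form of $\ZZ[j]$ represents only integers in which every prime $p\equiv 2\mod 3$ appears to even power (such primes being inert in $\ZZ[j]$), so $\mu\bar\mu$ has this property. By contrast, by Section~\ref{subsec:Quaternion-algebras-and} the squarefree integer $d_H$ is a product of distinct primes $\equiv 2\mod 3$, each to the first power. Hence the product of those primes $p\equiv 2\mod 3$ whose valuation in $L_X^2/2 = \mu\bar\mu D_H$ is odd is exactly $d_H$; and once $d_H$ is known, $D_H$ equals $d_H$ or $3d_H$ according to whether the number of prime divisors of $d_H$ is even or odd. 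Then $\mu\bar\mu = L_X^2/(2D_H)$ is determined, and Proposition~\ref{prop:order-Omu-iso-Omu-prime} gives that $\OO_\mu$ depends up to isomorphism only on the norm $\mu\bar\mu$, so $\End(A)$ is determined by $L_X^2$.

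For the isogeny assertion, if $A$ and $B$ are isogenous then $\End(A)\otimes\QQ\simeq\End(B)\otimes\QQ$ have a common discriminant $D_H$. Since primes $\equiv 2\mod 3$ divide $\mu_A\bar\mu_A$ and $\mu_B\bar\mu_B$ only to even powers and therefore do not contribute to $\rad_2$, one gets $\rad_2(L_X^2)=\rad_2(2D_H)=\rad_2(L_Y^2)$. If instead $L_X^2=L_Y^2$, the preceding paragraph gives $\End(A)\simeq\End(B)$ immediately. The main obstacle is the parity bookkeeping in the middle step: the prime $3$, whose valuation in $\mu\bar\mu$ is unconstrained (as $r\bar r = 3$ in $\ZZ[j]$), cannot be read off directly from the $3$-adic valuation of $L_X^2$, and must instead be assigned to $D_H$ via the dichotomy determined by the residue of $d_H$ modulo $3$.
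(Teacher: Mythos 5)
Your proposal is correct and follows essentially the same route as the paper: Theorem \ref{Thm:The-endomorphism-ring} gives the discriminant $\tfrac{1}{2}L_{X}^{2}$, Proposition \ref{prop:All-Orders} (with the conjugation/Atkin--Lehner normalization) gives $\End(A)\simeq\OO_{\mu}$ of discriminant $\mu\bar{\mu}D_{H}$, and your parity analysis of primes $p\equiv2\bmod3$ (inert in $\ZZ[j]$) recovering $d_{H}$, hence $D_{H}$ and $\mu\bar{\mu}$, is exactly the content of the paper's Lemma \ref{lem:RAD2} and the subsequent appeal to Corollary \ref{cor:order-determined-by-discri}. The isogeny and $L_{X}^{2}=L_{Y}^{2}$ conclusions are then drawn as in the paper.
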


Let us recall the following result of Theorem \ref{Thm:The-endomorphism-ring}
according to the cases $L_{X}^{2}=2$ or $0\mod6$:

\begin{tabular}{|l|c|cc|c|c|}
\hline 
 &  &  &  & $\End(A)$ & $D_{\End(A)}$\tabularnewline
\hline 
i) & $L_{X}^{2}=2\mod6$ & $\phi^{2}=\frac{1}{2}L_{X}^{2}=\frac{1}{6}L_{A}^{2},$ & $\psi=\frac{r}{3}(\phi-1)$ & $\ZZ[j,\psi]$ & $\tfrac{1}{2}L_{X}^{2}=\frac{1}{6}L_{A}^{2}$\tabularnewline
\hline 
ii) & $L_{X}^{2}=0\mod6$ & $\phi^{2}=\frac{1}{6}L_{X}^{2}=\frac{1}{2}L_{A}^{2}$ &  & $\ZZ[j,\phi]$ & $\tfrac{1}{2}L_{X}^{2}=\frac{3}{2}L_{A}^{2}$\tabularnewline
\hline 
\end{tabular}

where $D_{\End(A)}$ is the discriminant of $\End(A)$. For proving
Theorem \ref{thm:Thm35}, we need the following result:
\begin{lem}
\label{lem:RAD2}One has $\rad_{2}(D_{\End(A)})=\rad_{2}(\frac{1}{2}L_{X}^{2})=d_{H}$.
\end{lem}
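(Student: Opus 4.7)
The plan is to combine three pieces of information already assembled in the paper: the formula for the discriminant of $\End(A)$ from Theorem \ref{Thm:The-endomorphism-ring}, the description of $\End(A)$ as an order $\OO_\mu$ from Proposition \ref{prop:All-Orders}, and the arithmetic properties of norms of Eisenstein integers.

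First, the equality $\rad_2(D_{\End(A)}) = \rad_2(\tfrac{1}{2}L_X^2)$ is immediate from the last column of the table recalled just before the lemma, which gives $D_{\End(A)} = \tfrac{1}{2}L_X^2$ in both cases $L_X^2 \equiv 2 \mod 6$ and $L_X^2 \equiv 0 \mod 6$. So the content is the second equality $\rad_2(D_{\End(A)}) = d_H$.

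Next, by Proposition \ref{prop:All-Orders} there exists $\mu \in \ZZ[j]$ with $\End(A) \simeq \OO_\mu$, and the reduced discriminant is
\[
D_{\End(A)} = \mu\bar\mu \, D_H.
\]
I would then analyze the prime factorization of $\mu\bar\mu \in \ZZ_{>0}$ using the splitting behavior in $\ZZ[j]$: a prime $p \equiv 2 \mod 3$ is inert, so it contributes to $N(\mu) = \mu\bar\mu$ only in even powers $p^{2k}$; a prime $p \equiv 1 \mod 3$ splits and can contribute with any exponent; the prime $3$ is ramified (coming from $r = 1+2j$ with $r\bar r = 3$), so it can appear with any exponent. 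Consequently no prime congruent to $2 \mod 3$ divides $\mu\bar\mu$ to an odd power, i.e.\ $\rad_2(\mu\bar\mu) = 1$.

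Finally, I would compute $\rad_2(D_H)$. Since $H$ is indefinite, $d_H > 0$ is a square-free product of rational primes $\equiv 2 \mod 3$, and $D_H$ equals $d_H$ or $3d_H$; in either case the factor $3$ does not contribute to $\rad_2$, and $d_H$ being square-free with all prime factors $\equiv 2 \mod 3$ gives $\rad_2(D_H) = d_H$. Combining with $\rad_2(\mu\bar\mu)=1$: a prime $p \equiv 2 \mod 3$ dividing $d_H$ appears in $\mu\bar\mu D_H$ with exponent $2k+1$ (odd), while a prime $p \equiv 2 \mod 3$ not dividing $d_H$ appears with even exponent $2k$; hence $\rad_2(\mu\bar\mu D_H) = d_H$, which yields the lemma.

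No real obstacle is expected; the only point requiring a little care is the bookkeeping of 2-adic-type radical under the product $\mu\bar\mu \cdot D_H$, for which one must keep track of the parity of each prime's exponent and use the square-freeness of $d_H$ together with the fact that every prime $p \equiv 2 \mod 3$ appears in $\mu\bar\mu$ to an even power.
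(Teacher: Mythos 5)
Your proposal is correct and follows essentially the same route as the paper: cite Theorem \ref{Thm:The-endomorphism-ring} for $D_{\End(A)}=\tfrac{1}{2}L_{X}^{2}$, Proposition \ref{prop:All-Orders} for $\End(A)\simeq\OO_{\mu}$ with discriminant $\mu\bar{\mu}D_{H}$, and the splitting behaviour of primes in $\ZZ[j]$ (inert for $p\equiv2\bmod3$, split for $p\equiv1\bmod3$, ramified at $3$) to get $\rad_{2}(\mu\bar{\mu})=1$ and hence $\rad_{2}(\mu\bar{\mu}D_{H})=d_{H}$. This matches the paper's argument, which writes $\mu\bar{\mu}=3^{a}m_{1}m_{2}^{2}$ and concludes identically.
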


\begin{proof}
By Theorem \ref{Thm:The-endomorphism-ring}, the discriminant of $\End(A)$
equals $\tfrac{1}{2}L_{X}^{2}$. According to Proposition \ref{prop:All-Orders},
there exists $\mu\in\ZZ[j]$ such that $\End(A)\simeq\OO_{\mu}$;
the discriminant of $\OO_{\mu}$ is $\mu\bar{\mu}D_{H}$. Let $p\in\ZZ$
be a prime. If $p=1\mod3$, then $p$ is split in $\ZZ[j]$: there
exists a prime $\mathfrak{p}$ in $\ZZ[j]$ such that $p=\mathfrak{p}\bar{\mathfrak{p}}$.
If $p=2\mod3$, then $p$ is inert: it is still a prime in $\ZZ[j]$.
Finally $3$ is ramified: $3=-r^{2}=r\bar{r}$. Thus for any $\mu\in\ZZ[j]$,
there exists uniquely determined integers $a,m_{1},m_{2}$ such that
$\mu\bar{\mu}=3^{a}m_{1}(m_{2})^{2}$, where $a\geq0$ and the integer
$m_{i}$ ($i\in\{1,2\}$) is a product of primes congruent to $i\mod3$.
Therefore $\text{rad}_{2}(\mu\bar{\mu}D_{H})=d_{H}$ since $d_{H}$
is square free and the primes dividing $d_{H}$ are congruent to $2\mod3$. 
\end{proof}
\begin{proof}
(Of Theorem \ref{thm:Thm35}). Let $X=\Km_{3}(A),\,Y=\Km_{3}(B)$
be two generalized algebraic Kummer surfaces. There exists $\mu,\mu'\in\ZZ[j]$
such that the discriminants of the endomorphism rings of $A,B$ are:
\[
D_{\End(A)}=\mu\bar{\mu}D_{H},\,\,D_{\End(B)}=\mu'\bar{\mu'}D_{H'}.
\]
Suppose that $L_{X}^{2}=L_{Y}^{2}$. Then according to Theorem \ref{Thm:The-endomorphism-ring},
\[
\mu\bar{\mu}D_{H}=\mu'\bar{\mu'}D_{H'},
\]
thus by Lemma \ref{lem:RAD2}, $d_{H}=d_{H'}$ which implies $H\simeq H'$,
and moreover $\mu\bar{\mu}=\mu'\bar{\mu'}$. The orders $\End(A),\End(B)$
being of the same discriminant and of the form $\OO_{\mu}$ for some
$\mu\in\ZZ[j]$, they are isomorphic by Proposition \ref{cor:order-determined-by-discri}. 

Finally, we remark that if the surfaces $X$ and $Y$ are isogeneous,
the abelian surfaces $A$ and $B$ have quaternionic multiplication
by the same quaternion algebra $H$, and therefore $\text{rad}_{2}(\frac{1}{2}L_{X}^{2})=\text{rad}_{2}(\frac{1}{2}L_{Y}^{2})$. 
\end{proof}

\subsection{Order that contains an element of norm $-1$}

Let $\cO$ be the order of a quaternion algebra containing the Eisenstein
integers. Let $A$ be a general abelian surface with quaternionic
multiplication by $\cO$, and let $J$ be an order $3$ symplectic
automorphism of $A$. We denote by $X=\Km_{3}(A,J)$ the associated
order $3$ Kummer surface. For future use in the proof of Proposition
\ref{THEOREM:NumberEllPts}, let us prove the following result:
\begin{lem}
\label{lem:ElementNormm1}The group $\OO^{*}$ contains an element
of norm $-1$ if and only if $L_{X}^{2}=2\mod6$ or $3||L_{X}^{2}$. 
\end{lem}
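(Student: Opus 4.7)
My plan is to treat the two directions of the equivalence separately: the $\Rightarrow$ direction follows from a clean mod-$3$ obstruction on the norm form, while the $\Leftarrow$ direction needs a local-to-global argument exploiting the principality of $\OO$ established earlier in the paper.

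For necessity, assume $9\mid L_X^2$. Since case i of Theorem \ref{Thm:The-endomorphism-ring} has $3\nmid L_X^2$, the hypothesis forces us into case ii: $\OO=\End(A)=\ZZ[j,\phi]$ with $\phi^2=m:=L_X^2/6$, and $9\mid L_X^2$ translates to $3\mid m$. Using $\phi\alpha=\bar\alpha\phi$ for $\alpha\in\ZZ[j]$, for any $\gamma=\alpha+\beta\phi\in\OO$ we have
\[
\Nr(\gamma)=N_{\ZZ[j]/\ZZ}(\alpha)-m\,N_{\ZZ[j]/\ZZ}(\beta),
\]
which reduces modulo~$3$ to $N(\alpha)$. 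The binary form $a^2-ab+b^2$ takes values only in $\{0,1\}\pmod 3$ (a direct enumeration of the nine residue classes of $(a,b)\bmod 3$ suffices), so $\Nr(\gamma)\not\equiv -1\pmod 3$ for any $\gamma\in\OO$ and no norm-$(-1)$ element exists.

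For sufficiency, assume $9\nmid L_X^2$. The norm form on $\OO$ is rank-$4$ and indefinite of signature $(2,2)$, and at every finite prime $p$ the rank-$5$ form $\Nr\oplus\langle 1\rangle$ over $\QQ_p$ is isotropic (a general fact for rank $\ge 5$ forms over a $p$-adic field), so $\Nr$ represents $-1$ over each $\QQ_p$ and also over $\RR$ by signature; by Hasse--Minkowski $\Nr$ represents $-1$ over $\QQ$. To lift a rational solution to an integral one I invoke the principality of $\OO$ proved earlier together with strong approximation for the norm-one group $\OO^1$ in the indefinite quaternion algebra $H$: these imply that $\Nr\colon\OO\to\ZZ$ hits every integer it hits locally at every place, and the hypothesis $9\nmid L_X^2$ is exactly what removes the mod-$3$ local obstruction identified in the necessity step.

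The main obstacle is the integral lift in the sufficiency direction, because a generic indefinite rank-$4$ $\ZZ$-form can have nontrivial spinor-genus obstructions to globalizing a rational solution; here the principality of $\OO$ collapses the relevant spinor class number to~$1$ and the argument reduces cleanly to the local analysis. As a sanity check one can exhibit explicit witnesses in low discriminant: $\gamma=1-\psi$ for $L_X^2=14$ (case i, $k=2$) has $\Nr=1-2=-1$; $\gamma=1+(1+j)\psi$ for $L_X^2=20$ (case i, $k=3$) has $\Nr=-1$ after a short expansion; and $\gamma=-3r+2\phi$ for $L_X^2=42$ (case ii, $m=7$) has $\Nr=27-28=-1$.
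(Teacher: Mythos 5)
Your necessity half ($9\mid L_X^2\Rightarrow$ no element of norm $-1$) is correct and is essentially the paper's own computation, and your three explicit witnesses for $L_X^2=14,20,42$ do check out. The genuine gap is in the sufficiency half. What you actually verify there is that $\Nr$ represents $-1$ over $\RR$ and over every $\QQ_p$; but this is true for any order in any indefinite quaternion algebra, including those with $9\mid L_X^2$, so it cannot carry the content of the statement (if it sufficed, it would contradict your own necessity step). What is needed is representability of $-1$ by the integral lattice $(\OO,\Nr)$ over every $\ZZ_p$, and that is exactly where the hypothesis must enter: at $p=3$, and also at $p=2$ and at the primes dividing $\mu\bar{\mu}d_H$, the lattice is not unimodular and the local norm form has to be computed and shown to take the value $-1$. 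For instance, when $3||L_X^2$ the form at $3$ is $N(\alpha)-d_H N(\beta)$ with $N$ the Eisenstein norm and $d_H\equiv 2\bmod 3$, and since $3$ divides the discriminant a mod-$3$ solution alone does not automatically lift, so a genuine local argument (structure of the maximal order in the division algebra $H_3$, or a Hensel-type refinement) is required. None of these local integral verifications appears in your write-up; the sentence ``the hypothesis $9\nmid L_X^2$ is exactly what removes the mod-$3$ local obstruction'' is an assertion, not a proof --- your necessity step shows an obstruction exists when $3\mid m$, not that none exists when $3\nmid m$, and it says nothing about the other bad primes.

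Two further points. Appealing to ``the principality of $\OO$ established earlier'' is circular relative to the paper's logic: principality for a general order $\OO_{\mu}$ is proved in Section 5 via the count of irreducible components of $(\cH\cup\bar{\cH})/\G(\OO^{*})$, which rests on this very lemma; it is also unnecessary, since once the $\ZZ_p$-computations are done the correct unconditional tool is the strong Hasse principle for indefinite integral forms in at least $4$ variables (Eichler--Kneser: an indefinite $\ZZ$-form of rank $\geq 4$ represents every nonzero integer it represents over all $\ZZ_p$; spinor exceptions occur only in the ternary case), with no class-number input about $\OO$ at all. By contrast, the paper sidesteps the quaternary local analysis entirely: following Kohel--Verrill it uses the ternary discriminant form $q\mapsto\Tr(q)^{2}-\Nr(q)$ on $\OO/\ZZ$, shows (when $9\nmid L_X^2$) that it represents a prime $p\equiv 1\bmod 4$, and then produces a norm $-1$ unit as the fundamental unit of the real quadratic order $\ZZ[q]$ of discriminant $p$. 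Your route could be completed, but only after supplying the missing local computations at $3$, $2$ and the primes dividing $\mu\bar{\mu}d_H$ and replacing the principality appeal by the rank $\geq 4$ strong Hasse principle.
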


\begin{proof}
We follow the proof of \cite[Lemma 3.3]{KohelVerril} given for Eichler
orders. Let $Q$ be quadratic form $Q(q)=\Tr(q)^{2}-\Nr(q)$ for $q\in\OO=\OO_{\mu}$.
It is degenerate of rank $3$ and induces a non-degenerate ternary
quadratic form (also denoted by $Q$) on $\OO_{\mu}/\ZZ$, called
the discriminant form. For any $q\in\OO_{\mu}$, the integer $Q(q)$
is the discriminant of the quadratic subring $\ZZ[q]$. It suffices
to show that the discriminant form represents a prime $p$ congruent
to $1\mod4$, since then $\cO=\OO_{\mu}$ contains a real quadratic
order of discriminant $p$, whose fundamental unit has norm $-1$.
\\
Suppose that $d_{H}=2\mod3$ i.e. $D_{H}=3d_{H}$. A basis of $\OO_{\mu}$
is $1,j,\mu\phi,\mu j\phi$ with $\phi^{2}=d_{H}$. Consider $d=\mu\bar{\mu}d_{H}=(\mu\phi)^{2}$.
The discriminant form is 
\[
Q(xj+y\mu\phi+zj\mu\phi)=-3x^{2}+4d(y^{2}+yz+z^{2}).
\]
Suppose that $d_{H}=1\mod3$ i.e. $D_{H}=d_{H}$ is coprime to $3$.
A basis of $\OO_{\mu}$ is $1,j,\mu\psi,\mu j\psi$ with $\psi=\frac{r}{3}(\phi-1)$,
and $\phi^{2}=d_{H}$. Consider $d=\mu\bar{\mu}d_{H}=(\mu\phi)^{2}$.
The discriminant form is 
\[
Q(xj+y\mu\psi+zj\mu\psi)=-3x^{2}+4xy-2xz+z^{2}+4d(y^{2}+yz+z^{2}).
\]
In both cases, this is an indefinite quadratic form. Suppose that
$\mu$ is coprime to $3$. Thus $d$ is coprime to $3$, the form
$Q$ is not $0$ modulo any prime and we conclude as in \cite[Lemma 3.3]{KohelVerril}
that the quadratic form represents prime numbers $p$ congruent to
$1\mod4$, and therefore there exists a unit of norm $-1$. \\
Suppose that $9|L_{X}^{2}$ and consider $\tau=x+yj+u\phi+vj\phi$.
The equation 
\[
\tau\bar{\tau}=x^{2}-xy+y^{2}-3d'(u^{2}-uv+v^{2})=-1
\]
has no solution $\tau=x+yj+u\phi+vj\phi$ modulo $3$ for any $d'$,
thus the equation $\tau\bar{\tau}=-1$ has no solution: there is no
unit of norm $-1$.
\end{proof}

\subsection{\label{subsec:Number-of-generalized}Number of generalized Kummer
structures}

In this section, we consider an indefinite quaternion algebra $H=\frac{(-3,d_{H})}{\QQ}$,
where $d_{H}>0$ is a square-free product of primes congruent to $2\mod3$,
so that the discriminant of $H$ equals to $d_{H}$ or $3d_{H}$.
The generators are $r,\phi_{0}$ with $r^{2}=-3,\phi_{o}^{2}=d_{H}$
and $r\phi_{o}=-\phi_{o}r$. As before, let $\t$ be the quaternion
$\phi_{o}$ or $\frac{1}{3}r(\phi_{o}-1)$ according if $3|D_{H}$
or not. Let $\OO$ be an order containing the sub-ring $\ZZ[j]$.
By Proposition \ref{prop:All-Orders}, we can suppose that $\OO$
is contained in the maximal order $\OO_{m}$, and there exists $\mu\in\ZZ[j]$
such that $\OO$ is the ring
\[
\OO_{\mu}=\ZZ[j]\oplus\ZZ[j]\mu\t.
\]

Let $A=\CC^{2}/\Phi(I)\,^{t}(z,1)$ be an abelian surface with quaternionic
multiplication by $\OO=\OO_{\mu}$ and Picard number $3$ ($I$ is
an $\OO$-ideal). Let $J_{A}$ be the order $3$ automorphism acting
on $A$ corresponding to $j\in\OO$. Let 
\[
X=\Km_{3}(A)=\Km_{3}(A,J_{A})
\]
be the associated generalized Kummer surface: it is the minimal resolution
of $A$ by the group generated by $J_{A}$. 

Let $\mathcal{K}(X)$ denote the set of generalized Kummer structures
on $X$, which is the set of isomorphisms classes of pairs $(B,G)$
such that $X\simeq\Km_{3}(B,G)$, where the pair $(B,G)$ is said
isomorphic to $(B',G')$ if there exists an isomorphism $\phi:B\to B'$
of abelian surfaces such that $G=\phi G'\phi^{-1}$ (see \cite{KRS}). 

Let $\mathcal{C}_{A}$ be the set of orbits (under conjugation by
$\aut(A)$) of order $3$ symplectic automorphism sub-groups $G\subset\aut(A)=\OO^{*}$. 

Let us recall that we proved in Theorem \ref{THEOREM:bidule} that
\[
\FM(A)=\FM(A,\NS A))=\{A,\hat{A}\}
\]
and that by Theorem \ref{thm:Fourier-Mukai}, if $(B,G_{B})\in\cK(X)$,
then $B\in\FM(A)$. 

We also recall that when $L_{X}^{2}\neq0,36\mod54$, by Theorem \ref{thm:Fourier-Mukai},
the generalized Kummer surface $\Km(A,G)$ associated to any group
$G\in\cC_{A}$ is isomorphic to $X=\Km_{3}(A,J_{A})$. Hence in that
case, there is a well-defined map 
\[
\varDelta_{A}:\mathcal{C}_{A}\to\mathcal{K}(X),\,G\to(A,G),
\]
which is moreover injective. Also from the discussion $\cK(X)=\Delta_{A}(\cC_{A})\cup\Delta_{\hat{A}}(\cC_{\hat{A}})$,
with $\Delta_{A}(\cC_{A})=\Delta_{\hat{A}}(\cC_{\hat{A}})$ if $A$
has a principal polarization and with $\Delta_{A}(\cC_{A})\cap\Delta_{\hat{A}}(\cC_{\hat{A}})=\emptyset$
otherwise.

If $L_{X}^{2}=0\text{ or }36\mod54$, then $\cK(X)$ is bounded from
above by the order of $\cC_{A}\cup\cC_{\hat{A}}$, and it can happen
that two pairs $(A,G)$, $(A,G')$ for $G,G'\in\cC_{A}$ give non-isomorphic
K3 surfaces \cite[Corollary 24]{RS4}. 

Let us recall that we denoted by $e_{3}(\OO)$ the number of elliptic
points of order $3$ on the Shimura curve $\cH/\G(\OO^{1})$. This
is also the number of orbits of order $3$ symplectic automorphism
sub-groups $G\subset\aut(A)$ under conjugation by $\OO^{1}\subset\aut(A)$.
We have 
\begin{thm}
\label{THEOREM:NumberEllPts}Suppose that $D_{H}=1$ or that $L_{X}^{2}=0\text{ or }36\mod54$.
Then
\begin{itemize}
\item $|\cK(X)|\leq e_{3}(\OO)$ if \textup{$L_{X}^{2}=2\mod6\text{ or }3||L_{X}^{2}$,}
\item $|\cK(X)|\leq2e_{3}(\OO)$ \textup{otherwise.}
\end{itemize}
Suppose that $D_{H}\neq1$ and $L_{X}^{2}\neq0,36\mod54$. Then 
\begin{itemize}
\item $|\cK(X)|=\frac{1}{2}e_{3}(\OO)\text{ if }L_{X}^{2}=2\mod6\text{ or }3||L_{X}^{2},$
\item $|\cK(X)|=2e_{3}(\OO)$ otherwise.
\end{itemize}
\end{thm}

\begin{proof}
Let $(B,G)\in\cK(X)$ be a Kummer structure on $X$. By Theorem \ref{thm:Fourier-Mukai},
we can suppose that $B=A$ or $B=\hat{A}$.

The surface $A$ is isomorphic to $\hat{\ensuremath{A}}$ if and only
if $A$ is principally polarized, which is equivalent by \ref{THEOREM:bidule}
to 
\[
L_{X}^{2}=2\mod6\text{ or }3||L_{X}^{2}.
\]
By Lemma \ref{lem:ElementNormm1}, this is also equivalent to the
condition that group $\OO^{*}$ contains an element of norm $-1$,
which is equivalent to suppose that $\G(\cO^{1})$ has index $2$
in $\G(\cO^{*})$. 

$\bullet$ Case $L_{X}^{2}=2\mod6\text{ or }3||L_{X}^{2}.$ Let us
suppose that $L_{X}^{2}=2\mod6\text{ or }3||L_{X}^{2}.$ Then the
elements of norm $-1$ exchange the half planes $\cH$ and $\bar{\cH}$
and one has 
\[
\cH\cup\bar{\cH}/\G(\cO^{*})=\cH/\G(\cO^{1}).
\]
In any case, that proves that $\cK(X)$ has order $\leq e_{3}(\cO)$.
Suppose now that $H$ is a skew-field i.e. $D_{H}\neq1$. Let $\g\in\cO^{*}$
be a norm $-1$ element.  The matrix $\Phi(\g)$ has determinant
$-1$ and acts anti-holomorphically on the Poincaré upper-plane by
$z\to(a\bar{z}+b)/(c\bar{z}+d)$. That action induces a real structure
on the Shimura curve $\cH/\G(\OO^{1})$. It is known that this action
has no fixed-point since $H$ is a skew field (see \cite[Section 2.2]{Elkies};
a Shimura curve has no real points), since we suppose $D_{H}\neq1$.
Therefore there are no fixed points among the $e_{3}(\OO)$ order
$3$ elliptic points on the Shimura curve $\mathcal{H}/\G(O^{1})$.
At the level of orbits in $\mathcal{H}$, if $G\subset\OO^{1}$ has
order $3$ and fixes $z\in\mathcal{H}$, then the point $\g\bar{z}\in\mathcal{H}$
is fixed by $\g G\gamma^{-1}\subset\OO^{1}$, and is not in the orbit
of $z$ under $\cO^{1}$. Therefore $\g$ maps the orbit $O(G)=\{\tau G\tau^{-1}\,|\tau\in\OO^{1}\}$
to the orbit $O(\g G\g^{-1})$, which is also equal to $\g O(G)\g^{-1}$.
The action of the conjugation by $\g$ on the set of $e_{3}(\OO)$
orbits $O(G)=\{\tau G\tau^{-1}\,|\tau\in\OO^{1}\}$ has therefore
no fixed element, hence the number of conjugacy classes of order $3$
groups contained in $\cO^{*}$ under the action by conjugation of
$\cO^{*}$ is $\tfrac{1}{2}e_{3}(\OO)$.

$\bullet$ Suppose that $L_{X}^{2}=18\mod54$. From Theorems \ref{thm:Fourier-Mukai}
and \ref{THEOREM:bidule}, the Kummer structures on $X$ are the pairs
$(A,G)$, $(\hat{A},\hat{G})$ with $G,\hat{G}$ symplectic of order
$3$. One has $\cO^{*}=\cO^{1}$. The abelian surface $A$ has $e_{3}(\OO)$
orbits of symplectic order $3$ automorphism groups, and so has the
surface $\hat{A}\not\simeq A$. Thus the number of Kummer structures
on $X$ is $2e_{3}(\cO)$.

$\bullet$ Suppose that $L_{X}^{2}=6k$ with $k=0$ or $6\mod9$.
Since all Kummer structures on $X=\Km{}_{3}(G)$ are of the form $(A,G)\text{ or }(\hat{A},\hat{G})$
with $G$ of order $3$, we obtain the upper-bound $2e_{3}(\cO)$.
This is only an upper bound since it can happen that there is a group
$G$ such that $(A,G)$ is not a Kummer structure of $X$. 
\end{proof}
 Suppose that $\mu=\mathfrak{q}_{1}\dots\text{\ensuremath{\mathfrak{q}}}_{n}\in\ZZ[j]$
is a product of primes $\mathfrak{q}_{1},\dots\mathfrak{,q}_{n}$
in $\ZZ[j]$, with norm some natural primes $q_{1},\dots,q_{n}$ such
that $q_{j}=1\mod3$ and the $q_{j}$ are distinct. Let $A$ be a
general abelian surface with quaternionic multiplication by $\OO_{\mu}$
and let $X=\Km_{3}(A)$ be the associated generalized Kummer surface.
A consequence of Propositions \ref{prop:-The-numberellipticPoints}
is 
\begin{cor}
\label{cor:pleindestructures}The order $\OO_{\mu}=\ZZ[j]\oplus\ZZ[j]\mu\t$
is an Eichler order of level $\mu\bar{\mu}$. The number of Kummer
structures on $X$ is equal to $2^{m+\e},$ where $m$ is the number
of primes dividing $\frac{1}{2}L_{X}^{2}=\mu\bar{\mu}D_{H}$, and
where $\e=-2$ if $3|D_{H}$, $\e=-1$ otherwise.
\end{cor}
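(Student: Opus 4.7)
The plan is to combine Propositions \ref{prop:NumberEllPts} and \ref{prop:-The-numberellipticPoints} after checking that $\OO_\mu$ falls into their hypotheses, and then to read off the two cases $3\nmid D_H$ and $3\mid D_H$ from the explicit formula.

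First I would verify that $\OO_\mu$ is Eichler of level $N=\mu\bar\mu$. By Proposition \ref{prop:All-Orders} the reduced discriminant of $\OO_\mu$ equals $\mu\bar\mu D_H$, so the level is $N=\mu\bar\mu=q_1\cdots q_n$. Since the primes $q_j$ are distinct and each $q_j\equiv 1\bmod 3$, whereas every prime dividing $D_H$ is either $3$ or $\equiv 2\bmod 3$, the integer $N$ is square-free and coprime to $D_H$. The second statement of the proposition recalled after the definition of Eichler order then gives that $\OO_\mu$ is Eichler of level $N=\mu\bar\mu$.

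Next I would compute $e_3(\OO_\mu)$ using Proposition \ref{prop:-The-numberellipticPoints}. Since $9\nmid N$, the formula reads $e_3(\OO_\mu)=\prod_{p\mid D_H}\bigl(1-\bigl(\tfrac{-3}{p}\bigr)\bigr)\prod_{p\mid N}\bigl(1+\bigl(\tfrac{-3}{p}\bigr)\bigr)$. Each factor over $p\mid N$ equals $2$ because $q_j\equiv 1\bmod 3$ makes $-3$ a square mod $q_j$. For $p\mid D_H$, the factor equals $2$ when $p\equiv 2\bmod 3$ (since $-3$ is not a square mod $p$) and equals $1$ when $p=3$ (Legendre symbol zero). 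Hence
\[
e_3(\OO_\mu)=2^{n+\omega_2(D_H)},
\]
where $\omega_2(D_H)$ counts the prime divisors of $D_H$ congruent to $2\bmod 3$.

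Finally I would split according to whether $3\mid D_H$ and invoke Proposition \ref{prop:NumberEllPts}. If $3\nmid D_H$, then $D_H=d_H$ is a product of primes $\equiv 2\bmod 3$, so $\omega_2(D_H)=\omega(D_H)$ and $\mu\bar\mu D_H$ is coprime to $3$; in particular $L_X^2=2\mu\bar\mu D_H\equiv 2\bmod 3$ and the number $m$ of primes dividing $\tfrac12 L_X^2$ equals $n+\omega(D_H)$, giving $e_3(\OO_\mu)=2^m$ and a number of Kummer structures equal to $\tfrac12 e_3(\OO_\mu)=2^{m-1}$, i.e. $\e=-1$. If $3\mid D_H$, then $D_H=3d_H$, $\omega_2(D_H)=\omega(d_H)=\omega(D_H)-1$, and $L_X^2=6\mu\bar\mu d_H$ with $\mu\bar\mu d_H$ coprime to $3$, so $3\Vert L_X^2$ and $m=n+\omega(D_H)=\omega_2(D_H)+n+1$; Proposition \ref{prop:NumberEllPts} again applies in the form $\tfrac12 e_3(\OO_\mu)=\tfrac12\cdot 2^{m-1}=2^{m-2}$, so $\e=-2$.

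The two arithmetic inputs (the Legendre-symbol evaluation and the factorisation of $\tfrac12 L_X^2=\mu\bar\mu D_H$) are routine, and the main conceptual point—already taken care of by Proposition \ref{prop:NumberEllPts}—is the passage from orbits of order $3$ subgroups under $\OO^1$ (counted by $e_3$) to orbits under the full unit group $\OO^*$, together with the contribution of the dual torus $\hat A$. So the only step requiring genuine care is checking which case of Proposition \ref{prop:NumberEllPts} applies, and this is handled by tracking the $3$-adic valuation of $L_X^2=2\mu\bar\mu D_H$ in each of the two scenarios above.
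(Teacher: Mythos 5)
Your proposal is correct and follows essentially the same route as the paper: identify $\OO_{\mu}$ as an Eichler order of level $\mu\bar{\mu}$ via its reduced discriminant (the paper computes $\det(\Tr(\g_{i}\g_{j}))$ directly, you cite Proposition \ref{prop:All-Orders}, which is the same content) and \cite[Proposition 1.54]{AlsinaBayer}, then evaluate $e_{3}(\OO_{\mu})$ by Proposition \ref{prop:-The-numberellipticPoints} and pass to the number of Kummer structures via Proposition \ref{prop:NumberEllPts} after checking whether $3\nmid D_{H}$ (so $L_{X}^{2}=2\mod3$) or $3|D_{H}$ (so $3||L_{X}^{2}$). Your write-up is in fact slightly more careful than the paper's, in that you explicitly verify the coprimality of the level with $D_{H}$ and spell out the Legendre-symbol computation and the bookkeeping relating $m$, $n$ and $\omega(D_{H})$.
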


\begin{proof}
The order $\OO_{\mu}$ is generated by $\g_{1}=1,\g_{2}=j,\g_{3}=\mu\t,\g_{4}=j\mu\t$,
and one computes that the reduced discriminant of $\OO_{\mu}$ satisfies
\[
D_{\OO_{\mu}}^{2}=\det(\Tr(\g_{i}\g_{j}))=(\mu\bar{\mu})^{2}D_{H}^{2}.
\]
By the hypothesis on $\mu,$ the integer $\mu\bar{\mu}$ is square
free, thus by \cite[Proposition 1.54]{AlsinaBayer}, $\OO_{\mu}$
is an Eichler order of level $\mu\bar{\mu}$. 

The formula in Proposition \ref{prop:-The-numberellipticPoints} gives
that $e_{3}(\OO_{\mu})=2^{m}$ if $D_{H}$ is coprime to $3$ and
$e_{3}(\OO_{\mu})=2^{m-1}$ if $3|D_{H}$. With the hypothesis on
$H$ and $\mu$, either $L_{X}^{2}=2\mod6$ or $3||L_{X}^{2}$ and
Theorem \ref{THEOREM:NumberEllPts}, gives that the number of Kummer
structures equals to $2^{m+\e}$. 
\end{proof}
\begin{rem}
From Corollary \ref{cor:pleindestructures} and the above examples
of Eichler orders, we see that in the same isogeny class of $X$,
the number of generalized Kummer structures can be arbitrarily large.
Moreover letting varying $H$, and taking the unique maximal order
(up to conjugation), we also obtain generalized Kummer surfaces with
an arbitrarily large number of Kummer structures.
\end{rem}

Using the formulas in \cite{CR}, the following Table gives the number
of Kummer structures for some low values of $L_{X}^{2}$:

\vspace{1mm}

\begin{tabular}{|c|c|c|c|c|c|c|c|c|c|c|c|c|c|}
\hline 
$L_{X}^{2}$ & 6 & 12 & 18 & 24 & 30 & 36 & 42 & 48 & 54 & 60 & 66 & 72 & 90\tabularnewline
\hline 
$|\mathcal{K}(X)|$ & $1$ & $1$ & $1$ & $1$ & $1$ & $4^{*}$ & $1$ & $1$ & $3^{*}$ & $2$ & $1$ & $2$ & $4^{*}$\tabularnewline
\hline 
\end{tabular}

where for $L_{X}^{2}=0$ or $36\mod54$, the exponent $^{*}$ indicates
that this is only an upper bound.

\section{\label{subsec:Classification-of-the}Moduli spaces of generalized
Kummer surfaces}

\subsection{The algebraic case: irreducibility of the moduli space of generalized
Kummer surfaces}

For each integer $\ell>0$ such that $\ell=0$ or $2\mod6$, let $\mathcal{NS}_{\ell}$
be the Néron-Severi lattice of the generalized Kummer surface $X$
with Picard number $19$ and the divisor $L_{X}$ such that $L_{X}^{2}=\ell$.
We denote by $\cM_{\ell}$ the one dimensional moduli space of algebraic
K3 surfaces $X$ polarized by $\mathcal{NS}_{\ell},$ as defined in
\cite{Dolgachev}.  Let us recall that we obtain in Section \ref{subsec:The-N=0000E9ron-Severi-latofA}
a unique rank $3$ lattice $\text{NS}_{\ell}$ such that, for $X=\Km_{3}(A)$
with $L_{X}^{2}=\ell$, one has $\NS A)\simeq\text{NS}_{\ell}$. Let
us denote by $\OO=\OO_{\mu}$ the unique quaternion order with discriminant
$\bar{\mu}\mu D_{H}=\tfrac{1}{2}\ell$, so that the abelian surface
$A$ has quaternionic multiplication by $\OO$. 

In \cite{Barth} Barth asks how many irreducible components has the
moduli space $\cM_{\ell}$. Such a question on algebraic lattice polarized
K3 surfaces has been studied for example in \cite{Dolgachev}. However,
since the Néron-Severi group of the generalized Kummer surfaces is
large, the results of \cite{Dolgachev} do not apply in our situation.
We obtain:
\begin{thm}
\label{thm:une-ou-deux-compo-irred} The order $\OO$ is principal.
The moduli space of abelian surfaces with quaternionic multiplication
by $\OO$ has one irreducible component if $\ell=2\mod6$ or $3||\ell$,
and $2$ irreducible components otherwise. The moduli space $\cM_{\ell}$
of generalized Kummer surfaces is irreducible for any $\ell>0$ (with
$\ell=0$ or $2\mod6$). 
\end{thm}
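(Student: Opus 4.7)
The theorem has three parts---principality of $\OO$, the component count for the coarse moduli of QM abelian surfaces, and the irreducibility of $\cM_\ell$---which I would address in that order. The delicate step is principality in the non-Eichler case; the moduli-theoretic claims then follow from the Shimura picture of Section \ref{subsec:Shimura-curves} together with the duality $A \leftrightarrow \hat A$.

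\emph{Principality.} The order $\OO_\mu$ lies in the indefinite quaternion algebra $H/\QQ$ and, by Corollary \ref{cor:order-determined-by-discri}, is contained in the (up to conjugation unique) maximal overorder $\OO_m$ with $h(\OO_m)=1$. When $\mu\bar\mu$ is squarefree and coprime to $D_H$, $\OO_\mu$ is an Eichler order of level $\mu\bar\mu$ (Corollary \ref{cor:pleindestructures}) and $h(\OO_\mu)=1$ by the theorem of Vign\'eras recalled in Section \ref{subsec:Quaternion-algebras-and}. In general, given a left $\OO_\mu$-ideal $I$, I would write $I\OO_m = h\OO_m$ for some $h \in H^*$ and then correct $h$ at each prime dividing $\mu\bar\mu D_H$, using the explicit $\ZZ[j]$-basis $\OO_\mu = \ZZ[j] \oplus \ZZ[j]\mu\tau$, the principality of $\ZZ[j]$, and strong approximation for $H^1$ (available since $H$ is indefinite).

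\emph{Components of the QM moduli.} From Section \ref{subsec:Shimura-curves}, $\mathcal{X}(\OO) = (\cH \cup \bar\cH)/\G(\OO^*)$ is irreducible exactly when $\OO^*$ contains a reduced-norm $-1$ unit (such a unit swaps the two half-planes), and otherwise splits as $\cH/\G(\OO^1) \sqcup \bar\cH/\G(\OO^1)$. The Lemma preceding Proposition \ref{prop:NumberEllPts} shows that a norm $-1$ unit exists in $\OO^*$ precisely when $\ell \equiv 2 \pmod 6$ or $3 \Vert \ell$, giving the stated dichotomy: one component in those two cases, two components when $9 \mid \ell$.

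\emph{Irreducibility of $\cM_\ell$.} By Theorem \ref{Thm:The-endomorphism-ring}, every $X \in \cM_\ell$ has the form $\Km_3(A, J_A)$ with $\End(A) \simeq \OO$, so there is a surjective finite map $p: \mathcal{X}(\OO) \to \cM_\ell$, $(A, \iota) \mapsto \Km_3(A, \iota(j))$. If $\mathcal{X}(\OO)$ is irreducible, so is its image $\cM_\ell$. If instead $\mathcal{X}(\OO) = C_1 \sqcup C_2$ (the case $9 \mid \ell$), the involution $z \leftrightarrow \bar z$ swaps $C_1$ and $C_2$ and corresponds on the abelian-surface side to the duality $A \leftrightarrow \hat A$; in this range $A \not\simeq \hat A$ by Theorem \ref{THEOREM:bidule}, so $(A, \iota)$ and $(\hat A, \hat\iota)$ are distinct points of $\mathcal{X}(\OO)$ lying in opposite components, yet by Corollary \ref{cor:DualAbel} they have the same image in $\cM_\ell$. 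Thus the generic fiber of $p$ meets both $C_1$ and $C_2$, whence $p(C_1) = p(C_2) = \cM_\ell$ set-theoretically; since $p(C_1)$ is irreducible as the image of an irreducible curve under a proper morphism, $\cM_\ell$ is irreducible.
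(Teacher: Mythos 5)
Your proposal inverts the paper's logic, and the step you place first---principality of $\OO$ for \emph{arbitrary} $\mu$---is exactly the point you do not prove. ``Write $I\OO_m=h\OO_m$ and correct $h$ at each prime dividing $\mu\bar\mu D_H$ using strong approximation'' is a plan, not an argument: strong approximation for $H^1$ reduces $h(\OO_\mu)=1$ to the local condition $\mathrm{nrd}(\OO_{\mu,p}^{*})=\ZZ_p^{*}$ at every prime dividing $\mu\bar\mu D_H$ (and, strictly, only handles locally principal ideals), and this condition is not automatic: there exist orders in indefinite quaternion algebras over $\QQ$ with class number $>1$, and at $p=3$ the norms of $\ZZ[j]_3^{*}$ only fill an index-two subgroup of $\ZZ_3^{*}$, so the $\ZZ[j]$-basis $\OO_\mu=\ZZ[j]\oplus\ZZ[j]\mu\t$ and principality of $\ZZ[j]$ by themselves settle nothing at the non-Eichler primes. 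Moreover your second step silently identifies ``the moduli of abelian surfaces with QM by $\OO$'' with $\mathcal{X}(\OO)=(\cH\cup\bar\cH)/\G(\OO^{*})$, which already presupposes $h(\OO)=1$, since by Section \ref{subsec:Abel-surf-with-QM} each left ideal class yields its own one-dimensional family. The paper's route is the reverse and avoids all of this arithmetic: using Theorem \ref{THEOREM:bidule} (the genus of $\NS A)$ is a single class and $O(\NS A))\to O(\mathrm{A}_{\NS A)})$ is surjective) it shows the embedding of $\mathrm{NS}_\ell$, equivalently of $\mathrm{T}_\ell$, into $U^{\oplus3}$ is unique up to isometry, hence there is a unique one-dimensional family of Hodge structures with $V^{1,1}\cap U^{\oplus3}=\mathrm{NS}_\ell$; this family must be $(\cH\cup\bar\cH)/\G(\OO^{*})$, and principality of $\OO$ is then \emph{deduced} from this uniqueness, not fed into it.

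There is a second gap in your irreducibility argument when $9\mid\ell$: you assert that the involution $z\mapsto\bar z$ of $\mathcal{X}(\OO)$ corresponds to duality $A\mapsto\hat A$, i.e.\ that $(A,\iota)$ and $(\hat A,\hat\iota)$ lie in opposite components. That is unproved, and a priori the duality involution could act within a single component, in which case your fiber argument collapses. The paper uses a different mechanism: the pure quaternion $\phi\in\OO$ has negative reduced norm and normalizes $\OO$, so $c_\phi:\g\mapsto\phi\g\phi^{-1}$ gives a second marking $\iota\circ c_\phi$ on the \emph{same} surface $A$, inequivalent to $\iota$, whose point lies in the other half-plane because $\Nr(\phi)<0$; since $c_\phi(j)=j^{2}$, both points have the same image under $\Psi$, so $\Psi$ factors through the involution exchanging the two components and $\cM_\ell$ is irreducible. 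If you wish to keep the duality formulation, you must actually prove that the two Kummer structures $(A,J_A)$ and $(\hat A,\hat J_A)$ of Theorem \ref{THEOREM:bidule} give points in different components of $\mathcal{X}(\OO)$; that is precisely the missing content, and as written your proof of the theorem is incomplete at both of these places.
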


The remaining of this section is devoted to the proof of Theorem \ref{thm:une-ou-deux-compo-irred},
during which we give the definition of what we mean by the moduli
space of abelian surfaces with quaternionic multiplication by $\OO$.

A Hodge structure of weight $2$ and K3 type on the lattice $U^{\oplus3}$
is the data of spaces $V^{p,q}\subset U^{\oplus3}\otimes\CC$ for
$p+q=2,\,p\geq0$ such that $\overline{V^{p,q}}=V^{q,p}$ and $\dim_{\CC}V^{2,0}=1$,
with a generator $\o$ of $V^{2,0}$ satisfying $\o.\o=0,\,\o.\bar{\o}>0$,
where the intersection form is induced from the intersection form
on $U^{\oplus3}$. For a lattice $\text{NS}_{\ell}\subset U^{\oplus3}$
of rank $3$ and signature $(1,2)$, the family of Hodge structures
$(V^{p,q}){}_{p+q=2}$ such that $V^{1,1}\cap U^{\oplus3}=\text{NS}_{\ell}$
is one-dimensional (it is biholomorphic to $\mathcal{H}\cup\bar{\mathcal{H}}$,
where $\mathcal{H}$ is the upper half plane, see e.g. \cite[Section 2.3]{Barth}
or \cite[Section 7.2.3]{Voisin}).

Let $\text{NS}$ be an even rank $3$ lattice of signature $(1,2)$
or $(0,3)$. The families of complex tori $A$ with $\NS A)\simeq\text{NS}$
are in one-to-one correspondence with the embeddings $\text{NS}\hookrightarrow U^{\oplus3}$
up to isometry. By \cite[Chapter 14, Proposition 1.8]{Huyb}, such
an embedding always exists. The orthogonal complement $T$ of $\text{NS}$
in $U^{\oplus3}$ is such that $T(-1)$ is in the same genus as $\text{NS}$.
Conversely any lattice $T$ in the genus of $\text{NS}(-1)$ can be
glued to $\text{NS}$ using a glue map $\varphi:\mathrm{A}_{\text{NS}}\to\mathrm{A}_{T}$
to form a rank $6$ even unimodular lattice $\text{NS}\oplus_{\varphi}T$,
which therefore is isomorphic to $U^{\oplus3}$ (for glueing theory
of lattices, we refer to the detailed Section 2 of \cite{MCM}). 

Let us denote by $\text{T}_{\ell}$ the transcendental lattice of
an abelian surface $A$ with $\NS A)\simeq\text{NS}_{\ell}$. Since
$\ell>0$, by Corollary \ref{cor:Bidule2}, the lattice $\text{T}_{\ell}$
is isomorphic to $\text{NS}_{\ell}(-1)$. By Theorem \ref{THEOREM:bidule},
the co-set $\mathcal{P}=O(\text{T}_{\ell})\setminus O(\mathrm{A}_{\text{T}_{\ell}})$
has a unique element. Thus (see \cite[Definition 1]{HLOY2}), if $\iota_{0},\iota_{1}:\text{T}_{\ell}\hookrightarrow U^{\oplus3}$
are any two embeddings, there exist $g\in O(\text{T}_{\ell}),$ $\Phi\in O(U^{\oplus3})$
such that $\iota_{1}=\Phi\circ\iota_{0}\circ g$. 

Let us prove that the glueing of $\text{NS}_{\ell}$ and $\text{T}_{\ell}$
is unique (one could refer to the general result of \cite[Theorem]{MirMor1},
but we prefer to give a proof here). Let $\varphi_{1},\varphi_{2}:\mathrm{A}_{\text{T}_{\ell}}\to\mathrm{A}_{\text{NS}_{\ell}}$
be two glueing maps. One obtains two glue-ups $\text{T}_{\ell}\oplus_{\varphi_{1}}\text{NS}_{\ell}$,
$\text{T}_{\ell}\oplus_{\varphi_{2}}\text{NS}_{\ell}$ and after identification
of these glue-ups with $U^{\oplus3}$ by using suitable isomorphisms,
we get two embeddings $\iota_{0},\iota_{1}:\text{T}_{\ell}\hookrightarrow U^{\oplus3}$
and $\tau_{0},\tau_{1}:\text{NS}_{\ell}\hookrightarrow U^{\oplus3}$.
\\
Let $g\in O(\text{T}_{\ell}),$ $\Phi\in O(U^{\oplus3})$ be such
that $\iota_{1}=\Phi\circ\iota_{0}\circ g$. Since $\Phi$ is an element
of the orthogonal group, $\Phi(\tau_{0}(\text{NS}_{\ell}))=\tau_{1}(\text{NS}_{\ell})$.
Therefore the glueing of $\text{NS}_{\ell}$ and $\text{T}_{\ell}$
is unique up to isomorphism (when $\ell<0$ we will see examples of
non-unique glue-ups). 

Let $A$ be an abelian surface with $T(A)\simeq\text{T}_{\ell}$.
The Hodge structure on $H^{2}(A,\ZZ)$ is isomorphic to a Hodge structure
$(V^{p,q}){}_{p+q=2}$ on $U^{\oplus3}$ such that $V^{1,1}\cap U^{\oplus3}=T_{\ell}^{\perp}$.
We thus obtain that there exists a unique $1$-dimensional family
of such Hodge structures. 
\begin{defn}
We call the corresponding one dimensional family of abelian surfaces
$A$ with these Hodge structures the moduli space of abelian surfaces
with quaternionic multiplication by $\OO$.
\end{defn}

It is the curve $(\mathcal{H}\cup\bar{\mathcal{H})}/\G(\OO^{*})$,
which has $2$ irreducible components if and only if $9|L_{X}^{2}$
(see Section \ref{subsec:Shimura-curves}) and one irreducible component
otherwise (here the surfaces $A$ are such that $L_{X}^{2}=\ell$,
for $X=\Km_{3}(A)$).

That implies that the set of $\End(A)$-ideals modulo principal ideals
has a unique class, since $(\cH\cup\bar{\cH})/\G(\OO^{*})$ is the
unique family of abelian surfaces with multiplication by $\End(A)\simeq\OO$. 

Take any $\ell=0$ or $2\mod6$. For a generic element $X=\Km_{3}(A)$
in $\cM_{\ell}$, let $\OO$ be the endomorphism ring of $A$. Up
to isomorphism, the ring $\OO$ is isomorphic to the order $\OO_{\mu}=\ZZ[j]\oplus\ZZ[j]\mu\t$
defined in Section 3, with $\frac{1}{2}L_{X}^{2}=\mu\bar{\mu}D_{H}$.
An element of the curve $\mathcal{X}(\OO)=(\cH\cup\bar{\cH})/\G(\OO^{*})$
is a pair $(A,\iota_{A})$, where $\iota_{A}:\OO_{\mu}\hookrightarrow\End(A)$
is a embedding. To such a pair, one can associate the pair $(A,G_{A})$,
where $G_{A}$ is the order $3$ symplectic group $G_{A}=\la\iota_{A}(j)\ra$.
Conversely, if $G_{A}=\la J_{A}\ra$ is an order $3$ symplectic automorphism
group on an abelian surface $A$, by Section \ref{subsec:The-endomorphism-ring}
and Corollary \ref{cor:order-determined-by-discri} there is an isomorphism
$\iota_{B}$ from an order $\OO_{\mu}$ (which is determined up to
isomorphism by $L_{X}^{2}$) to the endomorphism ring $\End(B)$ such
that $(A,G_{A})\simeq(B,\la\iota_{B}(j)\ra)$.

The degree of the forgetful map 
\[
\Psi:\mathcal{X}(\OO)=(\cH\cup\bar{\cH})/\G(\OO^{*})\to\cM_{\ell}
\]
which associates to $(A,\iota_{A})$ the generalized Kummer surface
$X=\Km_{3}(A,G_{A})$, where $G_{A}=\la\iota_{A}(j)\ra$, is therefore
the number of Kummer structures on a generic $X\in\cM_{\ell}$. If
$\OO$ is an Eichler order (then $\ell=2\mod6$ or $3||\ell$), the
map $\Psi$ is the quotient of $\mathcal{X}(\OO)$ (which is irreducible
in that case) by the Atkin--Lehner involutions. 

When $9|\ell$, the conjugation map $c_{\phi}:\g\to\phi\g\phi^{-1}$
by $\phi\in\OO_{\mu}=\OO$ still preserves $\OO$ (thus $\OO^{*}$).
Since the norm of $\phi$ is negative, the action of $\phi$ on $\cH\cup\bar{\cH}$
exchanges the two half planes $\cH,\bar{\cH}$. The homomorphism $\iota:\OO\to\End(A)$
is not equivalent to $\iota\circ c_{\phi}$, and the map $\Psi$ factors
through the quotient of $\mathcal{X}(\OO)$ by the involution on $\mathcal{X}(\OO)$
induced by $\phi$, which exchanges the two irreducible components
of $\mathcal{X}(\OO)$. The moduli space $\cM_{\ell}$ is therefore
irreducible.

\subsection{\label{subsec:Preliminaries-on-the}Preliminaries on the lattice
$U\oplus A_{2}$.}

In order to study the irreducible components of the moduli space
 of non-algebraic generalized Kummer surfaces, we need some preliminary
results on the lattice $U\oplus A_{2}$ introduced in Section \ref{subsec:The-N=0000E9ron-Severi-latofA}.
By results of Vinberg \cite{Vinberg}, the lattice $(U\oplus A_{2})(-1)$
is the Néron-Severi group of some K3 surfaces with a finite number
of automorphisms, which have been studied in \cite{ACR}. The following
elements $c_{1},\dots,c_{4}$ of $U\oplus A_{2}$
\[
c_{1}=(0,1,-1,1),\,c_{2}=(-1,-1,0,0),\,c_{3}=(1,0,0,-1),\,c_{4}=(1,0,1,0)
\]
(in basis $\g_{1},\dots,\g_{4}$ of $U\oplus A_{2}$) have square
$c_{j}^{2}=2$; their intersection matrix is {\footnotesize{}
\[
\left(\begin{array}{cccc}
2 & -1 & 0 & 0\\
-1 & 2 & -1 & -1\\
0 & -1 & 2 & -1\\
0 & -1 & -1 & 2
\end{array}\right).
\]
}Let us denote by $r_{1},\dots,r_{4}$ the order $2$ reflection 
\[
x\in U\oplus A_{2}\to x-(c_{k}x)c_{k}\in U\oplus A_{2},
\]
where $(\,,\,)$ denotes the intersection form between elements of
the lattice $U\oplus A_{2}$. The element $c_{5}=(0,0,1,1)\in U\oplus A_{2}$
has square $c_{5}^{2}=6$ and the reflection 
\[
r_{5}:x\to x-\tfrac{1}{3}(c_{5}x)c_{5}
\]
preserves the lattice $U\oplus A_{2}$. In \cite{Vinberg}, Vinberg
proves that lattice $U\oplus A_{2}$ (of signature $(3,1)$) has the
remarkable property that the group $W$ generated by the reflections
through the roots of square $2$ has finite index in the orthogonal
group, and that in fact $W$ is generated by $r_{1},\dots,r_{4}$.
Since the discriminant of the lattice $U\oplus A_{2}$ is $3$, only
reflections through elements of square $2$ and $6$ can preserve
$U\oplus A_{2}$. There is, up-to the action of $W$, a unique element
of square $6$ and the orthogonal group $O(U\oplus A_{2})$ is generated
by $r_{1},\dots,r_{5}$. We denote by $SO(U\oplus A_{2})$ the index
$2$ subgroup of $O(U\oplus A_{2})$ of elements with determinant
$1$; it is generated by the elements $r_{i}r_{j},\,1\leq i,j\leq5$. 

\subsection{The Barth moduli space of complex tori with order $3$ symplectic
groups}

The following construction is due to Barth and we take the same notations
as \cite[Section 2.1]{Barth}. Let $J_{A}$ be the generator of the
order $3$ symplectic group $G_{A}$ acting on $H_{1}(A,\ZZ)$. In
the basis $\a_{1},\b_{1},\a_{2},\b_{2}$ of $H_{1}(A,\ZZ)$, the action
of the symplectic order $3$ automorphism $J_{A}$ is by the matrix
\[
J=\left(\begin{array}{cccc}
0 & -1 & 0 & 0\\
1 & -1 & 0 & 0\\
0 & 0 & 0 & -1\\
0 & 0 & 1 & -1
\end{array}\right).
\]
Knowing the complex structure on $A$ amounts to fixing some $\RR$-linear
maps $z_{1},z_{2}:H_{1}(A,\RR)\to\CC$ inducing an isomorphism $H_{1}(A,\RR)\to\CC^{2}$
of real vector spaces. The coordinates $(z_{1},z_{2})$ determine
a period $\o=z_{1}\wedge z_{2}\in H^{2}(A,\CC)$. The period $\o$
is determined by the complex structure on $A$ up to multiplication
by a scalar, and $\o\wedge\o=0$, $\o\wedge\bar{\o}$ is positively
oriented, so that $(H^{2}(A,\CC),\CC\o)$ is a weight $2$ Hodge structure.
For the converse, let us recall Shioda's Torelli Theorem:
\begin{thm}
\label{thm:Shioda}\cite{Shioda} Let $A$ and $B$ be two $2$-tori.
There is an isomorphism of Hodge structures
\[
(H^{2}(B,\ZZ),\CC\o_{B})\simeq(H^{2}(A,\ZZ),\CC\o_{A})
\]
if and only if $B\simeq A\text{ or }B\simeq\hat{A}$, where $\hat{A}$
is the dual of $A$. For any given weight $2$ Hodge structure $(U^{\oplus3},\CC\o)$,
there exists a complex $2$-torus $A$ with an Hodge isometry 
\[
(H^{2}(A,\ZZ),\CC\o_{A})\simeq(U^{\oplus3},\CC\o).
\]
\end{thm}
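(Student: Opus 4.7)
The plan is to prove each direction of the biconditional separately, and then establish the existence part. For the ``if'' direction, the case $B\simeq A$ is immediate by functoriality. For the case $B\simeq\hat{A}$, I would use the canonical identification $H^{1}(\hat{A},\ZZ)\simeq H_{1}(A,\ZZ)$ together with Poincaré duality on the real $4$-manifold $A$ to obtain a natural isomorphism $H^{2}(\hat{A},\ZZ)\to H^{2}(A,\ZZ)$, and then check it is a Hodge isometry by comparing the Hodge decompositions of $\Lambda^{2}H^{1}(A)$ and $\Lambda^{2}H^{1}(\hat{A})=\Lambda^{2}H_{1}(A)$.

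For the converse, given an isomorphism of Hodge structures $\phi:H^{2}(B,\ZZ)\to H^{2}(A,\ZZ)$, the strategy is to recover an isomorphism (or anti-isomorphism) at the level of $H^{1}$ inducing $\phi$ via $\Lambda^{2}$. Over $\CC$, the key fact is that $H^{2,0}(A)=\CC\o_{A}$ is spanned by a \emph{decomposable} $2$-form: writing $\o_{A}=e_{1}\wedge e_{2}$ with $e_{i}\in H^{1}(A,\CC)$ (unique up to $\mathrm{GL}_{2}(\CC)$), one recovers $H^{1,0}(A)=\{v\in H^{1}(A,\CC)\mid v\wedge\o_{A}=0\}=\CC e_{1}\oplus\CC e_{2}$. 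Transporting this reconstruction through $\phi$ recovers the Hodge filtration on $H^{1}(B,\QQ)$. At the integral level, the functor $\L\mapsto\Lambda^{2}\L$ on rank $4$ lattices with the cup product has precisely a two-fold fiber, corresponding to $\L$ versus $\L^{*}$; this is what accounts for the $A$ versus $\hat{A}$ ambiguity.

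For the existence part, I would use the lattice identification $U^{\oplus3}\simeq\Lambda^{2}\ZZ^{4}$ induced by the Pfaffian pairing. Given a weight $2$ Hodge structure with generator $\o\in\Lambda^{2}\CC^{4}$ satisfying $\o\wedge\o=0$ and $\o\wedge\bar{\o}>0$, the first condition is the Plücker relation in $\Lambda^{4}\CC^{4}\simeq\CC$, so $\o$ is decomposable as $e_{1}\wedge e_{2}$; the second condition ensures that the $2$-plane $V^{1,0}:=\CC e_{1}+\CC e_{2}$ and its complex conjugate together span $\CC^{4}$. Declaring $V^{1,0}$ to be the $(1,0)$-part of a weight $1$ Hodge structure on $\ZZ^{4}$ integrates to a complex $2$-torus $A=\CC^{4}/V^{0,1}(\ZZ^{4})$ whose second cohomology realizes the prescribed Hodge structure on $U^{\oplus3}$.

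The main obstacle is the integral control in the converse direction. While the reconstruction of $H^{1,0}$ as a complex subspace is essentially automatic once decomposability of $\o_{A}$ is observed, pinning down the correct $\ZZ$-lattice structure on $H^{1}$ from the wedge product alone requires using the unimodularity of the cup product on $H^{2}$ to detect which of the two paired lattices $\L,\L^{*}$ we are looking at. The unavoidable $\L\leftrightarrow\L^{*}$ ambiguity is precisely what forces the conclusion of the theorem to include the dual torus $\hat{A}$, and the bookkeeping here is the most delicate step.
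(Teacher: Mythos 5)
The paper itself offers no proof of this statement: it is quoted verbatim from Shioda, so there is nothing internal to compare your route against; your proposal has to stand on its own. The easy parts of your sketch are fine and standard: the Hodge isometry $H^{2}(\hat{A},\ZZ)\simeq\Lambda^{2}H_{1}(A,\ZZ)\simeq\Lambda^{2}H^{1}(A,\ZZ)$ via the unimodular wedge pairing handles the ``if'' direction, and for surjectivity the Pl\"ucker relation $\o\wedge\o=0$ gives decomposability $\o=e_{1}\wedge e_{2}$, while $\o\wedge\bar{\o}>0$ gives $\langle e_{1},e_{2}\rangle\cap\overline{\langle e_{1},e_{2}\rangle}=0$, so one obtains a weight $1$ Hodge structure on $\ZZ^{4}$ and hence a complex $2$-torus (your formula $A=\CC^{4}/V^{0,1}(\ZZ^{4})$ needs the usual dualization, but that is cosmetic).

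The genuine gap is in the converse. Recovering $H^{1,0}$ as $\{v\mid v\wedge\o=0\}$ only controls the filtration over $\CC$; the entire difficulty is integral, and your pivotal assertion --- that $\L\mapsto\Lambda^{2}\L$ ``has precisely a two-fold fiber, corresponding to $\L$ versus $\L^{*}$'' --- is exactly the hard content of Shioda's theorem, asserted without argument. What must actually be proved is a statement about isometries, not lattices: given a Hodge isometry $\phi:\Lambda^{2}H^{1}(B,\ZZ)\to\Lambda^{2}H^{1}(A,\ZZ)$, produce an isomorphism $H^{1}(B,\ZZ)\to H^{1}(A,\ZZ)$ or $H^{1}(B,\ZZ)\to H_{1}(A,\ZZ)$ compatible with the weight $1$ structures. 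This requires knowing the image of $\mathrm{GL}_{4}(\ZZ)$ (together with the duality involution) inside $O(U^{\oplus3})$ and showing that any Hodge isometry lands in it after composition with a Hodge self-isometry; that index/generation computation is where Shioda's real work lies. Moreover the clean ``two-fold'' correspondence you presuppose is false at face value: for instance $-\mathrm{id}$ of $U^{\oplus3}$ is a Hodge self-isometry of $(H^{2}(A,\ZZ),\CC\o_{A})$ which is not of the form $\Lambda^{2}(u)$ for any real (hence any integral) $u$, since $\Lambda^{2}(u)=-\mathrm{id}$ forces all four eigenvalues of $u$ to equal $i$ or all to equal $-i$. So not every Hodge isometry of $H^{2}$ is induced by an isomorphism $B\to A$ or $B\to\hat{A}$; the correct statement, which needs proof, is that it becomes induced after twisting by such self-isometries, and your sketch contains no argument for this --- the acknowledged ``delicate bookkeeping'' is the theorem.
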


Thus a space $\CC\o\subset H^{2}(A,\CC$) with $\o\wedge\o=0$ and
$\o\wedge\bar{\o}$ positively oriented determines (at most) two complex
structures (in \cite[Section 2.1]{Barth}, Barth did not address that
problem). Moreover if the map $J$ preserves $\o$, i.e. if $J^{*}\o=\o$,
then $J$ is $\CC$-linear for the complex structure defined by $\CC\o$
and $J_{A}$ is holomorphic. The group $G_{A}=\la J_{A}\ra$ then
acts on $\wedge^{2}H^{1}(A,\ZZ)=H^{2}(A,\ZZ)\simeq U^{\oplus3}$,
and the invariant part is the lattice $U\oplus A_{2}$. In \cite[Section 2.1]{Barth},
Barth considers the natural domain
\[
\Omega=\{\o\in\PP((U\oplus A_{2})\otimes\CC)\,|\,\o\wedge\o=0,\,\,\o\wedge\bar{\o}>0\},
\]
which he calls a period domain for pairs $(A,G_{A})$ of a complex
$2$-torus $A$ with an order $3$ automorphism group $G_{A}$. There
is a group $\G\subset O(U\oplus A_{2})$ (described below) acting
on $\Omega$, such that the quotient $\mathcal{M}_{B}=\Omega/\G$
(of dimension $2$; the action of $\G$ is not discrete) is a moduli
space for these pairs $(A,G_{A})$. This is done by associating for
each such $A$, with Hodge structure $(H^{2}(A,\ZZ),\omega_{A})$
and a marking $\iota:H^{2}(A,\ZZ)\stackrel{\simeq}{\to}U^{\oplus3}$,
the period $[\iota(\o_{A})]\in\Omega\subset\PP((U\oplus A_{2})\otimes\CC)$;
taking the period $\iota(\o_{A})$ modulo the group $\G$ makes the
map independent of the choice of the marking $\iota$. 

We remark that although a period $[\o]\in\PP((U\oplus A_{2})\otimes\CC)$
lifts to a unique period $\o$ in $\PP(U^{\oplus3}\otimes\CC)$ (we
recall that there is a natural inclusion $U\oplus A_{2}\subset U^{\oplus3}$),
according to Shioda's Torelli Theorem, there are (at most) two surfaces
$A,\hat{A}$ which have the same period $\o$. Of course when $A$
is a principally polarized abelian surface, one has $A\simeq\hat{A}$,
and there is no confusion, but in general the moduli space $\mathcal{M}_{B}$
parametrizes pairs $(A,G_{A}),(\hat{A},\hat{G}_{A})$ of two dimensional
complex tori with an order $3$ automorphism group.  

The groups $O(U\oplus A_{2}),SO(U\oplus A_{2})$ defined in Section
\ref{subsec:Preliminaries-on-the} are related to the group $\G$
as follows. As defined in \cite[Section 2.1]{Barth}, let $\G'$ be
the group of invertible matrices in basis $\a_{1},\b_{1},\a_{2},\b_{2}$
preserving the orientation of $H_{1}(A,\ZZ)$ and commuting with the
action of $J$. This is the group of elements {\footnotesize{}$\left(\begin{array}{cc}
A & B\\
C & D
\end{array}\right)$ }with matrices $A,B,C,D\in GL_{2}(\ZZ)$ that are commuting with
$T=\left(\begin{array}{cc}
0 & -1\\
1 & -1
\end{array}\right)$. The group $\G$ is $\G=\rho(\G')$, the image by the representation
$\rho:\G'\to O(U\oplus A_{2})$ of the group $\G'$ acting on $U\oplus A_{2}\subset U^{\oplus3}=\wedge^{2}H^{1}(A,\ZZ)$.
From that description, we obtain the following result:  
\begin{prop}
\label{prop:The-group-Gamma}We have $GL_{2}(\ZZ[j])\simeq\G'$ and
the kernel of $\rho$ is $\la-jI_{2}\ra$, where $j^{2}+j+1=0$. The
group $SO(U\oplus A_{2})$ is equal to the group $\G$ and is isomorphic
to $GL_{2}(\ZZ[j])/\la-jI_{2}\ra$. 
\end{prop}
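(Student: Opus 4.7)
The strategy is to pass from integer matrices to $\ZZ[j]$-matrices by observing that $T$ represents multiplication by $j$, then to realise the target lattice $U\oplus A_{2}$ as the lattice of $2\times 2$ Hermitian matrices over $\ZZ[j]$, on which $GL_{2}(\ZZ[j])$ acts by conjugation $H\mapsto gHg^{*}$.

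First I would identify $\G'$ with $GL_{2}(\ZZ[j])$. Since $T^{2}+T+I=0$, the matrix $T$ gives $\ZZ^{2}$ the structure of a rank-$1$ free $\ZZ[j]$-module with $j$ acting as $T$; consequently $\ZZ^{4}$ becomes a rank-$2$ free $\ZZ[j]$-module on which $J=\mathrm{Diag}(T,T)$ acts as multiplication by $j$, and the centraliser of $J$ in $M_{4}(\ZZ)$ is exactly $M_{2}(\ZZ[j])$. If $\tilde M\in M_{2}(\ZZ[j])$ corresponds to $M\in M_{4}(\ZZ)$, a direct computation (reducing to the $1\times 1$ case) yields $\det_{\ZZ}(M)=N_{\QQ(j)/\QQ}(\det_{\ZZ[j]}(\tilde M))$. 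Since norms from $\ZZ[j]$ are non-negative, $M$ is invertible over $\ZZ$ iff this norm equals $1$ iff $\tilde M\in GL_{2}(\ZZ[j])$; in that case $\det_{\ZZ}(M)=+1$ automatically, so the orientation condition defining $\G'$ comes for free.

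Second, I would realise $\rho$ via Hermitian matrices. Identify $U\oplus A_{2}\subset H^{2}(A,\ZZ)=\wedge^{2}H^{1}(A,\ZZ)$ with the rank-$4$ $\ZZ$-lattice of $2\times 2$ Hermitian matrices over $\ZZ[j]$, equipped with the quadratic form $H\mapsto -2\det H$ (up to sign convention); taking as basis the two diagonal elementary Hermitian matrices together with the two off-diagonal generators whose upper-right entries are $1$ and $j$ recovers the Gram matrix of $U\oplus A_{2}$ written down in Section~2. Using the general identification of the $J$-invariants of $\wedge^{2}_{\ZZ}L$, for $L$ a rank-$2$ free $\ZZ[j]$-module, with the Hermitian forms on $L$, one checks on Barth's basis $\g_{1},\dots,\g_{4}$ that $\rho$ coincides with the $GL_{2}(\ZZ[j])$-action $g\cdot H=gHg^{*}$. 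The kernel of $\rho$ is then immediate: $gHg^{*}=H$ for every Hermitian $H$ forces $gg^{*}=I_{2}$ (via $H=I_{2}$), and then commutation with every element of $M_{2}(\ZZ[j])$ (since $\{E_{11},E_{22}, F, G\}$ spans $M_{2}(\QQ(j))$) forces $g=\l I_{2}$ with $\l\bar\l=1$. Since every unit of $\ZZ[j]$ has norm one, $\ker\rho=\{\l I_{2}\,|\,\l\in\ZZ[j]^{\times}\}=\la -jI_{2}\ra$.

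Third, I would show $\G=\rho(\G')=SO(U\oplus A_{2})$. The inclusion $\G\subset SO(U\oplus A_{2})$ follows from $\det(gHg^{*})=N(\det g)\det H=\det H$ (so $\rho(g)$ is orthogonal), together with a case check on the standard generators of $GL_{2}(\ZZ[j])$ (elementary unipotents $E_{12}(\l)$, the swap matrix, and the unit diagonals $\mathrm{Diag}(u,1)$), each of which induces a determinant-$+1$ transformation on the Hermitian lattice. For the reverse inclusion I would invoke Vinberg's theorem recalled in the preceding subsection: $O(U\oplus A_{2})$ is generated by the reflections $r_{1},\dots,r_{5}$, so $SO(U\oplus A_{2})$ is generated by the four products $r_{1}r_{i}$, $i=2,3,4,5$, and for each such product I would exhibit an explicit preimage in $GL_{2}(\ZZ[j])$ by expressing the corresponding root $c_{i}$ as a rank-$1$ Hermitian matrix. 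The main obstacle is exactly this dictionary step: matching the Hermitian-conjugation action with Barth's representation under the identification $U\oplus A_{2}\simeq\mathrm{Herm}_{2}(\ZZ[j])$, and then identifying Vinberg's reflections $r_{i}$ with the Hermitian roots $c_{i}$, requires careful bookkeeping between two unrelated-looking bases; once the dictionary is in place, everything reduces to finite matrix manipulations, and one recognises the result as the natural integral $\ZZ[j]$-form of the classical isomorphism $PGL_{2}(\CC)\simeq SO^{+}(3,1)$.
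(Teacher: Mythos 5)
Your identification $\G'\simeq GL_{2}(\ZZ[j])$ (centraliser of $J$, with $\det_{\ZZ}=N_{\QQ(j)/\QQ}\circ\det_{\ZZ[j]}$ making the orientation condition automatic) is correct and in fact more detailed than the paper's, and your computation of $\ker\rho$ through the Hermitian model (taking $H=I_{2}$ to force $gg^{*}=I_{2}$, then scalarity, then units of $\ZZ[j]$) is a genuinely different and cleaner argument than the paper's, which instead extends $\rho$ to $GL_{2}(\QQ(j))$ and quotes the classification of normal subgroups of $GL_{2}$ over a field. However, everything in your second and third steps hinges on the dictionary identifying $(U\oplus A_{2},\rho)$ with the lattice of integral Hermitian matrices over $\ZZ[j]$ with the form $-2\det$ and the congruence action $H\mapsto gHg^{*}$, expressed in Barth's basis $\g_{1},\dots,\g_{4}$ and with the correct integral normalisation (no twist by the different of $\ZZ[j]$); you yourself flag this as the main obstacle and do not carry it out. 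The paper sidesteps the dictionary entirely: it computes the images $\tau_{1},\dots,\tau_{4}$ of Swan's generators of $GL_{2}(\ZZ[j])$ directly in the basis $\g_{1},\dots,\g_{4}$ and proves both inclusions by explicit word identities between the $\tau_{k}$ and the products $r_{i}r_{j}$.

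The genuine gap is in the inclusion $SO(U\oplus A_{2})\subset\G$, which is exactly the nontrivial integral content of the proposition, and the one concrete mechanism you offer for it does not work as stated: under the form $-2\det$, a square-$2$ root $c_{i}$ corresponds to a Hermitian matrix $C_{i}$ with $\det C_{i}=-1$, hence of rank two, whereas rank-one Hermitian matrices are precisely the isotropic vectors, so the roots cannot be written as rank-one Hermitian matrices. The workable device is the adjugate: $r_{C}$ is $H\mapsto -C\widetilde{H}C$ (rescaled by $\frac{1}{3}$ for the square-$6$ root $c_{5}$), so $r_{C}r_{C'}$ is induced by $g=C\widetilde{C'}$; this does lie in $GL_{2}(\ZZ[j])$ when both roots have square $2$, but for the products $r_{i}r_{5}$ one gets $\det g=3$, so $g$ realises the isometry only up to a scalar and one must still exhibit a unimodular representative in $GL_{2}(\ZZ[j])$ --- precisely the finite verification your sketch omits and the paper's explicit identities (each $r_{i}r_{5}$ as a word in the $\tau_{k}$) supply. (Your model also makes visible a caveat shared with the paper: $-\mathrm{Id}$ has determinant $+1$ but is not of the form $H\mapsto gHg^{*}$, so the equality with $\G$ must be read for the subgroup generated by the even words in $r_{1},\dots,r_{5}$, i.e.\ the determinant-one isometries preserving each component of the negative cone.) Until the dictionary is verified on Barth's basis and these preimages are written down, the surjectivity onto $SO(U\oplus A_{2})$ is asserted rather than proved.
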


So as expected by Barth, the group $\G$ is close to be the orthogonal
group $O(U\oplus A_{2})$.
\begin{proof}
(Of Proposition \ref{prop:The-group-Gamma}). The fact that $GL_{2}(\ZZ[j])\simeq\G'$
is obtained readily from the above description, by interpreting the
$2\times2$ matrices as block matrices, by substituting $T=-\left(\begin{array}{cc}
0 & -1\\
1 & -1
\end{array}\right)$ to $j$, and the size $2$ identity matrix to $1$. It is not difficult
to check that the kernel of $\rho$ contains the matrix{\small{} $-\left(\begin{array}{cc}
j & 0\\
0 & j
\end{array}\right)$}. The morphism $\rho:GL_{2}(\ZZ[j])\to O(U\oplus A_{2})$ extends
to a morphism $\rho_{\QQ}:GL_{2}(\QQ[j])\to GL((U\oplus A_{2})\otimes\QQ)$.
The kernel of $\rho_{\QQ}$ is a distinguished sub-group. The distinguished
sub-groups of $GL_{2}(\QQ[j])$ are classified: these are either the
group of homotheties, $SL_{2}(\QQ[j])$ or some subgroups containing
$SL_{2}(\QQ[j])$. Only the sub-group $\la-jI_{2}\ra$ of the group
of homotheties is in the kernel of $\rho$. The group $SL_{2}(\QQ[j])$
is not contained in the kernel of $\rho_{\QQ}$, thus neither are
his over-groups. We conclude that $\ker(\rho)=\la-jI_{2}\ra$ and
$\G\simeq GL_{2}(\ZZ[j])/\la-jI_{2}\ra$. The following matrices{\small{}
\[
\left(\begin{array}{cc}
-j & 0\\
0 & 1
\end{array}\right),\,\left(\begin{array}{cc}
1 & 1\\
0 & 1
\end{array}\right),\,\left(\begin{array}{cc}
1 & j\\
0 & 1
\end{array}\right),\,\left(\begin{array}{cc}
0 & 1\\
-1 & 0
\end{array}\right)
\]
}are generators of $GL_{2}(\ZZ[j])$ (see e.g. \cite{Swan}). A direct
computation gives that their action on $U\oplus A_{2}$ is by the
determinant $1$ matrices{\footnotesize{}
\[
\begin{array}{l}
\tau_{1}=\left(\begin{array}{cccc}
1 & 0 & 0 & 0\\
-1 & 1 & 2 & 1\\
-1 & 0 & 1 & 0\\
0 & 0 & 0 & 1
\end{array}\right),\,\tau_{2}=\left(\begin{array}{cccc}
1 & 0 & 0 & 0\\
-1 & 1 & -1 & -2\\
0 & 0 & 1 & 0\\
1 & 0 & 0 & 1
\end{array}\right),\end{array}
\]
\[
\tau_{3}=\left(\begin{array}{cccc}
1 & 0 & 0 & 0\\
0 & 1 & 0 & 0\\
0 & 0 & 1 & 1\\
0 & 0 & -1 & 0
\end{array}\right),\,\tau_{4}=\left(\begin{array}{cccc}
0 & -1 & 0 & 0\\
-1 & 0 & 0 & 0\\
0 & 0 & -1 & -1\\
0 & 0 & 0 & 1
\end{array}\right),
\]
}in basis $\g_{1},\dots,\g_{4}$ of $U\oplus A_{2}$, therefore $\tau_{1},\dots,\tau_{4}$
are generators of $\G$. One can check that 
\[
\begin{array}{l}
\tau_{1}=(r_{1}r_{3}r_{2})^{2}r_{4}r_{2},\,\,\tau_{2}=r_{1}r_{2}r_{4}r_{2}r_{1}r_{3}r_{2}r_{3},\\
\tau_{3}=r_{3}r_{1}r_{2}r_{5}r_{1}r_{4},\,\,\tau_{4}=r_{1}r_{3}r_{2}r_{3}r_{1}r_{2},
\end{array}
\]
which implies that $\G\subset SO(U\oplus A_{2})$. Conversely, the
products $r_{i}r_{j}$ with $1\leq i<j\leq5$ satisfy
\[
\begin{array}{c}
r_{1}r_{2}=\tau_{3}^{2}\tau_{2}\tau_{4},\,r_{1}r_{3}=\tau_{1}\tau_{3}^{2}\tau_{2}\tau_{4}\tau_{2},\,r_{1}r_{4}=\tau_{3}^{2}\tau_{2}\tau_{1}\tau_{4}\tau_{1},\,r_{1}r_{5}=\tau_{3}^{2}\tau_{2}\tau_{3}\\
r_{2}r_{3}=\tau_{4}\tau_{3}\tau_{4}\tau_{1}\tau_{4}\tau_{3},\,r_{2}r_{4}=\tau_{3}\tau_{4}\tau_{3}\tau_{1}\tau_{4}\tau_{1},\,r_{2}r_{5}=\tau_{3}\tau_{4}\tau_{3}^{2},\\
r_{3}r_{4}=\tau_{4}\tau_{3}^{2}\tau_{2}\tau_{3}\tau_{2}\tau_{4}\tau_{3},\,r_{3}r_{5}=\tau_{4}\tau_{3}(\tau_{4}\tau_{1})^{2},\,r_{4}r_{5}=\tau_{3}\tau_{3}\tau_{4}\tau_{2}\tau_{3}\tau_{4},
\end{array}
\]
and since $r_{i}r_{j}=(r_{j}r_{i})^{-1}$ (because $r_{k}^{2}=1$),
one also knows the products $r_{i}r_{j}$ with $i>j$. Therefore $SO(U\oplus A_{2})\subset\G$
and the two groups are equal. 
\end{proof}

\subsection{The irreducible components of the moduli space  of non-algebraic
generalized Kummer surfaces}

 In this section, we study the moduli spaces $\cM_{\ell}$ of generalized
Kummer surfaces $X=\Km_{3}(A)$ with non-zero invariant class $L_{X}\in\NS X)$
such that $L_{X}^{2}=\ell\leq0$. We give an algorithm to compute
a complete set of all embeddings classes of the lattice $\text{NS}_{\ell}$
into $U^{\oplus3}$ for any $\ell\leq0$ with $\ell=0$ or $2\mod6$.
Contrary to the case with $\ell>0$, such an embedding is, in general,
far from being unique up to isometry. Moreover, the number of elements
in the genus of $\text{NS}_{\ell}$ grows to infinity with $\ell$.
Indeed, for any integer $k$, by the Smith--Minkowski--Siegel mass
formula, there is only a finite number of definite quadratic forms
of rank $\geq3$ that have less than $k$ elements in their genus. 

Consider $P=(5,-4,3,-3)\in U\oplus A_{2}$: it has square $-22$ and
$c_{j}P=-1$ for $j\in\{1,\dots4\}$ (see Section \ref{subsec:Preliminaries-on-the}
for the definition of $c_{j}$). Let us define the negative cone $N$
as 
\[
N=\{x\in U\oplus A_{2}\,\,|\,\,x^{2}\leq0,\,\,Px\leq0\}
\]
($P$ realizes a choice between the two connected components of $\{x^{2}\leq0\}\setminus\{0\}$).
By \cite[Section 6]{Vinberg}, the polyhedral cone
\[
\Pi=\{x\in U\oplus A_{2}\,\,|\,\,xc_{j}\leq0,\,\,j=1,...,4\}\cap N
\]
is a fundamental domain for the action of the reflection group $W$
on the cone $N$. Thus the cone
\[
\Pi'=\{x\in U\oplus A_{2}\,\,|\,\,xc_{j}\leq0,\,\,j=1,...,5\}\cap N
\]
is a fundamental domain for the action of $O(U\oplus A_{2})$ on $N$.
Since the group $SO=SO(U\oplus A_{2})$ is an index $2$ sub-group
of $O(U\oplus A_{2})$, any reflection $r_{j}$ is such that the cosets
of $O(U\oplus A_{2})/SO$ are $SO,r_{j}SO$. We thus get that $\Pi'+r_{j}\Pi'$
is a fundamental domain for $SO,$ in particular:
\begin{prop}
The cone $\Pi=\Pi'+r_{5}\Pi'$ is also a fundamental domain for the
action of $SO(U\oplus A_{2})=\G$. 
\end{prop}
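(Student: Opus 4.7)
The plan is to reduce the statement to a short computation combined with a standard coset argument. Since $SO$ has index $2$ in $O(U\oplus A_2)$ and $r_5\notin SO$ (being a reflection, it has determinant $-1$), the two cosets are $SO$ and $r_5\cdot SO$. Given any $x\in N$, the fundamental domain property of $\Pi'$ for $O(U\oplus A_2)$ produces $g\in O(U\oplus A_2)$ and $y\in\Pi'$ with $x=g\cdot y$; writing $g\in SO$ or $g=r_5 h$ with $h\in SO$ shows that either $y\in\Pi'$ or $r_5 y\in r_5\Pi'$ is an $SO$-representative of $x$. Hence $\Pi'\cup r_5\Pi'$ is a fundamental domain for $SO$, so it only remains to identify it with $\Pi$.

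The key computation is that $r_5$ fixes $c_1$ and $c_2$ and swaps $c_3$ and $c_4$. Using $c_5\cdot x=3(x_3+x_4)$ on $U\oplus A_2$, one finds $c_5\cdot c_1=c_5\cdot c_2=0$, $c_5\cdot c_3=-3$, $c_5\cdot c_4=3$, so that
\[
r_5(c_1)=c_1,\qquad r_5(c_2)=c_2,\qquad r_5(c_3)=c_3+c_5=c_4,\qquad r_5(c_4)=c_4-c_5=c_3,
\]
and of course $r_5(c_5)=-c_5$. This is the only nontrivial step.

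From this, for any $x\in\Pi'$ and $y=r_5 x$, the inequalities $yc_j=x\cdot r_5(c_j)$ give
\[
yc_1=xc_1\le 0,\ yc_2=xc_2\le 0,\ yc_3=xc_4\le 0,\ yc_4=xc_3\le 0,\ yc_5=-xc_5\ge 0,
\]
so $r_5\Pi'\subset\Pi\cap\{xc_5\ge 0\}$. The reverse inclusion follows by applying $r_5$ again and using $r_5^2=\mathrm{id}$: if $y\in\Pi\cap\{yc_5\ge 0\}$ then $r_5 y$ satisfies the same five inequalities with signs adjusted so that $r_5 y\in\Pi'$. Combined with the tautology $\Pi'=\Pi\cap\{xc_5\le 0\}$, this yields $\Pi=\Pi'\cup r_5\Pi'$, with the two pieces meeting only along the wall $c_5^{\perp}\cap\Pi$.

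Combining the two steps, $\Pi=\Pi'\cup r_5\Pi'$ is a fundamental domain for $SO(U\oplus A_2)=\Gamma$ acting on $N$. There is no real obstacle; the whole argument rests on noticing that the reflection $r_5$ permutes the defining normals $c_1,\ldots,c_4$ of $\Pi$, which is what makes the union $\Pi'\cup r_5\Pi'$ close up into a single polyhedral cone rather than a more complicated region.
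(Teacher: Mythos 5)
Your proof is correct and takes essentially the same route as the paper: the index-two coset argument (with $r_{5}\notin SO$ since $\det r_{5}=-1$) shows that $\Pi'\cup r_{j}\Pi'$ is a fundamental domain for $SO(U\oplus A_{2})$ for any reflection $r_{j}$, which is exactly how the paper argues. Your added computation that $r_{5}$ fixes $c_{1},c_{2}$, exchanges $c_{3},c_{4}$ and sends $c_{5}$ to $-c_{5}$, so that $r_{5}\Pi'=\Pi\cap\{xc_{5}\geq0\}$ and hence $\Pi'\cup r_{5}\Pi'=\Pi$, is accurate and merely makes explicit the identification the paper leaves implicit in its ``in particular''.
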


The Hilbert basis of $\Pi$ has $5$ elements
\[
\begin{array}{l}
w_{1}=(1,-1,1,-1),w_{2}=(2,-1,1,-1),w_{3}=(2,-2,1,-1),\\
w_{4}=(3,-3,2,-1),w_{5}=(3,-3,1,-2),
\end{array}
\]
which means that every element of the cone $\Pi$ is linear combination
with positive or zero integral coefficients of the vectors $w_{k},\,k=1,\dots,5$.
The extremal rays of $\Pi$ are $w_{1},w_{2},w_{4},w_{5}$. The intersection
matrix of the vectors $w_{k},\,k=1,\dots,5$ is {\footnotesize{}
\[
-\left(\begin{array}{lcccc}
0 & 1 & 2 & 3 & 3\\
1 & 2 & 4 & 6 & 6\\
2 & 4 & 6 & 9 & 9\\
3 & 6 & 9 & 12 & 15\\
3 & 6 & 9 & 15 & 12
\end{array}\right).
\]
}For $j\in\{1,2,4,5\}$, let $F_{j}$ be the cone generated by the
rays $w_{k},\,k\in\{1,2,4,5\}\setminus\{j\}$. The cones $F_{j}$
with $j\in\{1,2,4,5\}$ are the $4$ facets of $\Pi$; one can check
that the facets $F_{4},F_{5}$ are exchanged under the group $\G$.
By the above description, one can compute all representatives of the
polarizations $L_{A}$ (with $L_{A}^{2}\leq0$) of the moduli spaces
of complex tori in the polyhedral cone $\Pi$. We have to take into
account that one must avoid repetition from the facets $F_{4},F_{5}$,
which means (for example) to choose to exclude solutions that are
in $F_{5}\setminus F_{4}$. Moreover, by Proposition \ref{subsec:The-N=0000E9ron-Severi-latofA},
if we are looking for surfaces $X$ with $L_{X}^{2}=0\mod6$, we must
take only the solutions $L_{A}=(n_{1},n_{2},n_{3}+n_{4},n_{4})$ in
basis $\g_{1},\dots,\g_{4}$ with $gcd(n_{1},n_{2},n_{3},n_{4})=1$
and $gcd(n_{1},n_{2},n_{3},3)=1$. If we search for surfaces with
$L_{X}^{2}=2\mod6$, we must search solution of the form $(3n_{1},3n_{2},3n_{3}+n_{4},n_{4})$
with $gcd(3n_{1},3n_{2},3n_{3},n_{4})=1$. 

For example, the element $w_{1}$ of the Hilbert basis is the unique
element of the cone $\Pi$ of square $0$, therefore
\begin{cor}
The moduli space of non-algebraic generalized Kummer surfaces $X$
such that $L_{X}^{2}=0$ is irreducible.
\end{cor}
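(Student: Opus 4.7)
The plan is to identify the irreducible components of $\cM_{0}$ with $\Gamma$-orbits of primitive vectors $L_{A}\in U\oplus A_{2}$ of square $0$ satisfying the admissibility condition of Proposition \ref{PROP:ClasseOfLA} case ii), and then to invoke the classification of vectors in the fundamental domain $\Pi$. A generalized Kummer surface $X=\Km_{3}(A)$ with $L_{X}^{2}=0$ is determined by a period $\omega\in\Omega$ together with an invariant polarization $L_{A}\in U\oplus A_{2}$ such that $L_{A}\cdot\omega=0$. Since $6\mid L_{X}^{2}$, Proposition \ref{PROP:ClasseOfLA} applies in case ii), so $L_{A}$ is primitive with $L_{A}^{2}=\tfrac{1}{3}L_{X}^{2}=0$ and, writing $L_{A}=n_{1}\gamma_{1}+n_{2}\gamma_{2}+(n_{3}+n_{4})\gamma_{3}+n_{4}\gamma_{4}$, the coefficients satisfy $\gcd(n_{1},n_{2},n_{3},n_{4})=1$ and $\gcd(n_{1},n_{2},n_{3},3)=1$. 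Two polarizations determine the same moduli point precisely when they lie in the same $\Gamma$-orbit, so the irreducible components of $\cM_{0}$ biject with such orbits.

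Since $\Pi$ is a fundamental domain for $\Gamma$ acting on the negative cone $N$, each such orbit has a representative in $\Pi$. Expanding $L_{A}=\sum_{j=1}^{5}a_{j}w_{j}$ with $a_{j}\in\NN$ in the Hilbert basis, and noting that the Gram matrix of $w_{1},\dots,w_{5}$ recorded above is entrywise non-positive with only $w_{1}^{2}$ vanishing, the expansion
\[
L_{A}^{2}=\sum_{j}a_{j}^{2}(w_{j}^{2})+2\sum_{i<j}a_{i}a_{j}(w_{i}w_{j})
\]
is a sum of non-positive terms; the condition $L_{A}^{2}=0$ forces $a_{2}=\dots=a_{5}=0$, so $L_{A}=a_{1}w_{1}$, and primitivity forces $a_{1}=1$. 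One checks that $w_{1}=\gamma_{1}-\gamma_{2}+\gamma_{3}-\gamma_{4}$ gives $(n_{1},n_{2},n_{3},n_{4})=(1,-1,2,-1)$, which satisfies both gcd conditions, so $w_{1}$ is indeed admissible.

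The main subtlety to address is the boundary identification: the vector $w_{1}$ lies on the common face $F_{4}\cap F_{5}$ of the two facets that are exchanged under $\Gamma$, and a priori this could merge distinct orbits. However, any $\sigma\in\Gamma$ that exchanges $F_{4}$ and $F_{5}$ must permute the extremal rays of their common boundary, namely $\{w_{1},w_{2}\}$, and since $w_{1}^{2}=0\neq-2=w_{2}^{2}$, the isometry $\sigma$ cannot swap them but must fix each one. Hence no further identification occurs at $w_{1}$, the admissible primitive isotropic vectors form a single $\Gamma$-orbit, and $\cM_{0}$ is irreducible.
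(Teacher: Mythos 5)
Your proof is correct and follows essentially the same route as the paper: the paper's argument is exactly that $\Pi$ is a fundamental domain for $\G$ and that $w_{1}$ is the unique (primitive) square-zero element of $\Pi$, which your Hilbert-basis expansion with the non-positive Gram matrix makes explicit, together with the admissibility check for $w_{1}$. Your final paragraph about the identification of the facets $F_{4},F_{5}$ is superfluous: identifications inside $\Pi$ could only decrease the number of orbits, so they cannot threaten irreducibility once every orbit is shown to meet $\Pi$ only in $w_{1}$.
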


Let us give another example. The polarizations 
\[
L_{1}=(6,-5,4,-4),\,L_{2}=(15,-14,14,-14),\,L_{3}=(9,-7,7,-7)
\]
are in $\Pi$ and are the representatives of all polarizations $L_{A}$
of square $-28$ modulo the action of $\G$. Therefore the moduli
space $\cM_{-84}$ has exactly $3$ irreducible components. Corresponding
to $L_{1},L_{2},L_{3}$, there are three non-isomorphic embeddings
of the lattice $\NS A)$ in the even unimodular lattice $U^{\oplus3}$.
Knowing the classes $L_{j}$, one can compute these embeddings, and
the transcendental lattice $T_{j}$ of the complex tori $A$ with
invariant class (under $G_{A}$) $L_{j}$, $j\in\{1,2,3\}$. The lattices
$T_{2},T_{3}$ are isomorphic to $\NS A)(-1)$, and the lattice $T_{1}$,
which has a basis with Gram matrix {\footnotesize{}
\[
\left(\begin{array}{ccc}
4 & 2 & -2\\
2 & 6 & -3\\
-2 & -3 & 6
\end{array}\right),
\]
}is not isomorphic to $\NS A)(-1)$ (the minimum of the lattice $T_{1}$
is $4$, against $2$ for $T_{2},T_{3}$), but of course it is in
the same genus. 

Using the above algorithm, for an integer $k$, the following table
gives in line $\#L_{A}$ the number of irreducible components in the
moduli space $\cM_{-6k}$ of generalized Kummer surfaces $X$ such
that $3L_{A}^{2}=L_{X}^{2}=-6k$ (which is also the number of inequivalent
polarizations $L_{A}$ under $\G$), and in the line $\#G$ the number
of elements in the genus of $\NS A)$:

\begin{tabular}{|c|c|c|c|c|c|c|c|c|c|c|c|c|c|c|c|c|c|c|}
\hline 
$k$ & $1$ & $2$ & $3$ & $4$ & $5$ & $6$ & $7$ & $8$ & $9$ & $10$ & $11$ & $12$ & $13$ & $14$ & $15$ & $16$ & $17$ & $18$\tabularnewline
\hline 
$\#L_{A}$ & $1$ & $1$ & $2$ & $1$ & $2$ & $2$ & $2$ & $2$ & $3$ & $2$ & $3$ & $3$ & $3$ & $3$ & $4$ & $2$ & $4$ & $5$\tabularnewline
\hline 
$\#G$ & $1$ & $1$ & $1$ & $1$ & $2$ & $1$ & $2$ & $2$ & $2$ & $1$ & $3$ & $1$ & $3$ & $2$ & $2$ & $2$ & $4$ & $1$\tabularnewline
\hline 
\end{tabular}

The following table gives the same information for $k$ such that
$L_{X}^{2}=-6k+2$:

\begin{tabular}{|c|c|c|c|c|c|c|c|c|c|c|c|c|c|c|c|c|c|c|}
\hline 
$k$ & $1$ & $2$ & $3$ & $4$ & $5$ & $6$ & $7$ & $8$ & $9$ & $10$ & $11$ & $12$ & $13$ & $14$ & $15$ & $16$ & $17$ & $18$\tabularnewline
\hline 
$\#L_{A}$ & $1$ & $1$ & $1$ & $2$ & $2$ & $2$ & $3$ & $3$ & $3$ & $4$ & $2$ & $5$ & $4$ & $5$ & $3$ & $7$ & $3$ & $5$\tabularnewline
\hline 
$\#G$ & $1$ & $1$ & $1$ & $2$ & $1$ & $2$ & $1$ & $3$ & $2$ & $3$ & $2$ & $3$ & $2$ & $4$ & $2$ & $5$ & $2$ & $4$\tabularnewline
\hline 
\end{tabular}

Let us remark that the set of Hodge structures on $T(A)$ for a complex
torus $A$ with $\NS A)\simeq\text{NS}_{\ell}$ is $\PP^{1}$ for
$\ell<0$: 
\begin{lem}
Let be $L\in U\oplus A_{2}$ such that $L^{2}<0$. The domain $\Omega_{L}=\Omega\cap L^{\perp}$
is isomorphic to $\PP^{1}.$ 
\end{lem}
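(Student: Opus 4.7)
The plan is to recognize $\Omega_L$ as a smooth conic in a projective plane. First, I would determine the signature of the rank-$3$ sublattice $L^{\perp} \subset U \oplus A_{2}$. The Gram matrix \eqref{eq:interMatrixGamma} shows that $U \oplus A_{2}$ has signature $(3,1)$, so since $L^{2}<0$ the vector $L$ spans the unique negative direction; consequently $L^{\perp}\otimes\RR$ is positive definite of signature $(3,0)$, and in particular the restriction of the quadratic form to $L^{\perp}$ remains non-degenerate.

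With this in hand, the positivity condition $\omega\wedge\bar{\omega}>0$ defining $\Omega$ is automatic on $(L^{\perp}\otimes\CC)\setminus\{0\}$: writing $\omega=u+iv$ with $u,v\in L^{\perp}\otimes\RR$, one has $\omega\wedge\bar{\omega}=u^{2}+v^{2}\ge0$ with equality if and only if $u=v=0$. Hence
\[
\Omega_{L}=\bigl\{[\omega]\in\PP(L^{\perp}\otimes\CC)\,|\,\omega^{2}=0\bigr\}.
\]

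Since $L^{\perp}$ has rank $3$ and carries a non-degenerate quadratic form, $\PP(L^{\perp}\otimes\CC)\cong\PP^{2}$, and the equation $\omega^{2}=0$ cuts out a smooth plane conic, which is isomorphic to $\PP^{1}$ (e.g.\ via stereographic projection from any one of its points). No real obstacle is anticipated; the crux is simply the observation that $L^{2}<0$ forces $L^{\perp}$ to be positive definite, so that the Hodge positivity condition imposes no restriction at all and $\Omega_{L}$ coincides with the full quadric.
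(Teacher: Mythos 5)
Your proof is correct and follows essentially the same route as the paper: observe that $L^{2}<0$ in the signature $(3,1)$ lattice $U\oplus A_{2}$ forces $L^{\perp}$ to be positive definite, so the positivity condition $\o\wedge\bar{\o}>0$ is automatic on nonzero vectors, and $\Omega_{L}$ is then the smooth conic $\{\o^{2}=0\}$ in $\PP(L^{\perp}\otimes\CC)\simeq\PP^{2}$, hence isomorphic to $\PP^{1}$. The paper phrases the same argument via an explicit diagonalization $z_{1}^{2}+z_{2}^{2}+z_{3}^{2}$ of the form, while you argue with $\o=u+iv$; the two are equivalent.
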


\begin{proof}
The lattice $L^{\perp}\subset U\oplus A_{2}$ is positive definite,
so there are real coordinates $x_{1},x_{2},x_{3}$ of $L^{\perp}\otimes\RR$
in which the intersection form is $x_{1}^{2}+x_{2}^{2}+x_{3}^{2}=0$.
Let $z_{1},z_{2},z_{3}$ be the corresponding coordinates on $L^{\perp}\otimes\CC$;
each $\o\in\Omega_{L}$ is such that $\o.\o=0$ i.e. $z_{1}^{2}+z_{2}^{2}+z_{3}^{2}=0$,
and the condition $\o.\bar{\o}=z_{1}\bar{z}_{1}+z_{2}\bar{z}_{2}+z_{3}\bar{z}_{3}>0$
is empty since $\o\in\PP((U\oplus A_{2})\otimes\CC)$. Therefore $\Omega_{L}$
is a smooth quadric in $\PP^{2}$ and $\Omega_{L}\simeq\PP^{1}$.
\end{proof}
The moduli space $\cM_{L}$ of complex tori $A$ such that $L$ is
in the Néron-Severi group of $A$ is the quotient of $\PP^{1}$ by
$\G_{L}$, the stabilizer of $L$ in $\G$. Since $\G_{L}$ must preserve
the positive definite lattice $\NS A)^{\perp},$ such a group is finite
(the automorphism group of $\NS A)^{\perp}$ is $(\ZZ/2\ZZ)^{2}\times S_{3}$
or the dihedral group $D_{6}$ of order $12$ according if $L_{X}^{2}=0\mod6$
or $L_{X}^{2}=2\mod6$). If $L$ is in the interior of the fundamental
domain $\Pi$, the group $\G_{L}$ is trivial. 

\subsection{\label{subsec:Concluding-remarks}Concluding remark}

\subsubsection*{Computing the number of polarizations up to automorphisms}

By a theorem of Narasimhan--Nori, there are always -up to automorphisms-
a finite number of polarizations of any given square on an abelian
variety $A$. The computation of that number for principal polarizations
has been done for several classes of abelian varieties by various
authors (Lange, Rotger...). Let now $A$ be an abelian surface with
an order $3$ symplectic automorphism group, and suppose (to simplify
things) that $A$ has a principal polarization. The generalized Kummer
structures on $X=\Km_{3}(A)$ are the conjugacy classes of such groups
$G_{A}$ on $A\simeq$$\hat{A}$. To $G_{A}$, we associate the polarization
$L_{A}\in\NS A)$, which generates the invariant part of $\NS A)$
under $G_{A}$ and the number $L_{A}^{2}$ does not depend on the
group $G_{A}$. If $L\in\NS A)$ is a polarization of square $L^{2}=L_{A}^{2}$,
its orthogonal complement in $\NS A)$ is a definite lattice of rank
$2$ and discriminant $3$, thus is the lattice $\mathbf{A}_{2}$.
Therefore by \cite[Section 4, Theorem]{Shioda}, $L$ corresponds
to the fixed part of an order $3$ automorphism. The integer $\frac{1}{2}e_{3}(\OO_{\mu})$
is therefore also the number of orbits under $\aut(A)$ of polarization
$L\in\NS A)$ of square $L^{2}=L_{A}^{2}$.

\noindent Xavier Roulleau,
\\Universit\'e d'Angers,
\\CNRS, LAREMA, SFR MATHSTIC,   
\\F-49000 Angers, France
\\ \email{xavier.roulleau@univ-angers.fr}
%
\begin{verbatim}



\end{verbatim}


\begin{thebibliography}{99}

\bibitem{AlsinaBayer} Alsina M., Bayer P., Quaternion orders, quadratic forms, and Shimura curves, CRM Mon. Ser., 22. AMS, Prov., RI, 2004. xvi+196 pp. 

\bibitem{ACR} Artebani M., Correa Deisler C., Roulleau X., Mori dream K3 surfaces of Picard number $4$: projective models and Cox rings, to appear in the Israel J. of Math.

\bibitem{Barth} Barth W., On the classification of K3 surfaces with nine cusps, Complex analysis and algebraic geometry, 41--59, de Gruyter, Berlin, 2000. 

\bibitem{Barth2} Barth W., K3 surfaces with nine cusps, Geom. Dedicata 72 (1998), no. 2, 171--178.


\bibitem{BHPV} Barth W., Hulek K., Peters C., Van de Ven A., Compact complex surfaces, Second edition, Springer-Verlag, Berlin, 2004. xii+436 pp.

\bibitem{Bertin} Bertin J., R\'{e}seaux de {K}ummer et surfaces K3,  Inv. Math. 93 (1988), no. 2, 267--284.


\bibitem{BirLan} Birkenhake C., Lange H., Complex abelian varieties, Sec. ed. Grun. der Math. Wis., 302. Springer-Verlag, Berlin, 2004. xii+635 pp

\bibitem{BirLanCT} Birkenhake C., Lange H., Complex Tori, Progress in Mathematics, 177. Birkhäuser Boston, Inc., Boston, MA, 1999. xvi+251 pp

\bibitem{BonGee}  Bonfanti M., van Geemen B., Abelian surfaces with an automorphism and quaternionic multiplication, Canad. J. Math. 68 (2016), no. 1, 24--43

\bibitem{BonThe} Bonfanti M., Algebraic Surfaces with Automorphisms, PhD thesis, Università degli Studi di Milano, 2015

\bibitem{BriMac} Bridgeland T., Maciocia A., Complex surfaces with equivalent derived categories. Math. Z. 236 (2001), no. 4, 677--697. 

\bibitem{CR} Cartwright D., Roulleau X., A refinement of B\'ezout's Lemma, and order 3 elements in some quaternion algebras over $\mathbb{Q}$, preprint, arXiv

\bibitem{CS} Conway J.H., Sloane N.J.A., Sphere packings, lattices and groups, Third edition. Grund. der Math. Wiss., 290. Springer-Verlag, New York, 1999.

\bibitem{Dolgachev} Dolgachev, Mirror symmetry for lattice polarized K3 surfaces,  Alg. Geo., 4.  J. Math. Sci. 81 (1996), no. 3, 2599--2630.

\bibitem{Elkies} Elkies N., Shimura Curve computations, Algorithmic number theory (Portland), 1--47, Lecture Notes in Comput. Sci., 1423, Springer, Berlin, 1998

\bibitem{Elkies2} Elkies N., Shimura curve computations via K3 surfaces of Néron-Severi rank at least 19. Algorithmic number theory, 196--211,  Lecture Notes in Comput. Sci., 5011, Springer, Berlin, 2008.  

\bibitem{Fujiki} Fujiki A., Finite automorphism groups of complex tori of dimension two.  Publ. Res. Inst. Math. Sci. 24 (1988), no. 1, 1--97.  

\bibitem{HLOY} Hosono S., Lian B.H., Oguiso K., Yau  S.-T., Kummer structures on K3 surface: an old question of T. Shioda, Duke Math. J. 120 (2003), no. 3, 635--647. 

\bibitem{HLOY2} Hosono S., Lian B.H., Oguiso K., Yau  S.-T., Fourier--Mukai number of a K3 surface, in "Algebraic structures and moduli spaces", 177--192, CRM Proc. Lecture Notes, 38, Amer. Math. Soc., Providence, RI, 2004. 

\bibitem{GriHu} Gritsenko V., Hulek K., Minimal Siegel modular threefolds, Math. Proc. Cambridge Philos. Soc. 123 (1998), 461--485.


\bibitem{KRS} Kohel D., Sarti A., Roulleau X., A special configuration of $12$ conics and generalized Kummer surfaces, preprint ArXiv

\bibitem{KohelVerril} Kohel D., Verrill H., Fundamental domains for Shimura curves.  Les XXIIèmes Journées Arithmetiques (Lille, 2001).  J. Théor. Nombres Bordeaux 15 (2003), no. 1, 205--222. 




\bibitem{Huyb} Huybrechts D., Lectures on K3 surfaces,  Camb. Stud. in Ad. Math., 158. Camb. U. Press, Cambridge, 2016. xi+485 pp

\bibitem{MirMor} Miranda R., Morrison D. R., Embeddings of Integral Quadratic Forms, preprint

\bibitem{MirMor1} Miranda R., Morrison D. R., The number of embeddings of integral quadratic forms. I., Proc. Japan Acad. Ser. A Math. Sci. 61 (1985), no. 10, 317--320. 

\bibitem{MirMor2} Miranda R., Morrison D. R., The number of embeddings of integral quadratic forms. II., Proc. Japan Acad. Ser. A Math. Sci. 62 (1986), no. 1, 29--32.




\bibitem{MCM} McMullen C., K3 surfaces, entropy and glue, J. Reine Angew. Math. 658 (2011), 1--25.

\bibitem{Nikulin}  Nikulin V.V., Integer symmetric bilinear forms and some of their geometric applications,  Izv. Akad. Nauk SSSR Ser. Mat. 43 (1979), no. 1, 111--177, 238. 

\bibitem{NikulinK} Nikulin V.V., On Kummer surfaces, Izv. Akad. Nauk SSSR Ser. Mat. 39 (1975), 278293. English translation: Math. USSR. Izv, 9 (1975), 261--275.

\bibitem{Ogg} Ogg A., Real points on Shimura curves, Arithmetic and geometry, Vol. I, 277--307,  Progr. Math., 35, Birkhäuser Boston, Boston, MA, 1983. 

\bibitem{Rotger1} Rotger V., On the group of automorphisms of Shimura curves and applications, Compos. Math. 132 (2002), 229--241.

\bibitem{Rotger2} Rotger V.,  Quaternions, polarization and class numbers, J. Reine Angew. Math. 561 (2003), 177--197

\bibitem{Rotger3} Rotger V., Modular Shimura varieties and forgetful maps, Trans. Amer. Math. Soc. 356 (2004), no. 4, 1535--1550.




\bibitem{RS3} Roulleau X., Sarti A., Constructions of Kummer structures on generalized Kummer surfaces, preprint, arXiv 2109.02146

\bibitem{RS4} Roulleau X., Sarti A., Fourier--Mukai partners and  generalized Kummer structures on generalized Kummer surfaces of order $3$, preprint ArXiv 2211.11804

\bibitem{Serre} Serre J.P., Cours d'arithmétique, Le Mathématicien, No. 2. Presses Universitaires de France, Paris, 1977. 188 pp.

\bibitem{Shimizu} Shimizu A., On complex tori with many endomorphisms, Tsukuba J. Math. 8 (1984), no. 2, 297--318.

\bibitem{Shimura} Shimura G., On analytic families of polarized abelian varieties and automorphic functions, Ann. Math. 78 (1963), 149--192.

\bibitem{Shioda} Shioda T., Some remarks on abelian varieties, J. Fac. Sci. Univ. Tokyo Sect. IA 24 (1977),11--21.

\bibitem{Shioda2} Shioda T.,  The period map of Abelian surfaces,  J. Fac. Sci. Univ. Tokyo Sect. IA Math. 25 (1978), no. 1, 47--59. 

\bibitem{Swan} Swan R.G., Generators and Relations for certain Special Linear Groups,  Advances in Math. 6 (1971), 1--77 (1971)

\bibitem{Vigneras} Vign\'eras M.F., Arithmétique des algèbres de quaternions, Lect. Notes Math. 800, Springer, 1980.


\bibitem{Vinberg} Vinberg E.B., Classification of $2$-reflective hyperbolic lattices of rank 4, translation in Trans. Moscow Math. Soc. 2007, 39--66

\bibitem{Voight} Voight J., Quaternion Algebras, GMT 288, 2021, Springer



\bibitem{Voisin} Voisin C., Théorie de Hodge et géométrie algébrique complexe, Cours Spécialisés, 10. SMF, Paris, 2002. viii+595 pp.


\end{thebibliography}
\end{document}